\numberwithin{equation}{section}
\newtheorem{Cor}[equation]{Corollary}
\newtheorem{Lem}[equation]{Lemma}
\newtheorem{Prop}[equation]{Proposition}
\newtheorem{Thm}[equation]{Theorem}
\theoremstyle{remark}
\newtheorem{Def}[equation]{Definition}
\newtheorem{Defs}[equation]{Definitions}
\newtheorem{Not}[equation]{Notation}
\newtheorem{Exa}[equation]{Example}
\newtheorem{Exas}[equation]{Examples}
\newtheorem{Cons}[equation]{Construction}
\newtheorem{Conv}[equation]{Convention}
\newtheorem{Hyp}[equation]{Hypothesis}
\newtheorem{War}[equation]{Warning}
\newtheorem{Rem}[equation]{Remark}
\newtheorem{Rec}[equation]{Recollection}
\newcommand{\nc}{\newcommand}
\nc{\dmo}{\DeclareMathOperator}
\dmo{\Ab}{Ab}
\dmo{\add}{add}
\dmo{\Ann}{Ann}
\dmo{\Aut}{Aut}
\dmo{\BIK}{\mathsf{Rel}}
\dmo{\bik}{\mathsf{rel}}
\dmo{\cbrk}{\mathrm{CBrk}}
\dmo{\CIdim}{CI-dim}
\dmo{\codim}{codim}
\dmo{\coev}{coev}
\dmo{\Coh}{Coh}
\dmo{\coh}{coh}
\dmo{\coker}{coker}
\dmo{\cone}{cone}
\dmo{\Cov}{Cov}
\dmo{\cx}{cx}
\dmo{\depth}{depth}
\dmo{\Der}{D}
\dmo{\dgModu}{dg-Mod}
\dmo{\dlim}{\underrightarrow{\mathrm{lim}}}
\dmo{\End}{End}
\dmo{\ev}{ev}
\dmo{\Ext}{Ext}
\dmo{\fdim}{fd}
\dmo{\Flat}{Flat}
\dmo{\fp}{fp}
\dmo{\GFlat}{GFlat}
\dmo{\Gh}{Gh}
\dmo{\GInj}{GInj}
\dmo{\gldim}{gl. dim}
\dmo{\GProj}{GProj}
\dmo{\Gr}{\mathsf{Gr}}
\dmo{\gr}{\mathsf{gr}}
\dmo{\height}{ht}
\dmo{\HN}{HN}
\dmo{\hocolim}{hocolim}
\dmo{\Hom}{Hom}
\dmo{\Ident}{Id}
\dmo{\idim}{id}
\dmo{\Id}{Id}
\dmo{\id}{id}
\dmo{\ilim}{\underleftarrow{\mathrm{lim}}}
\dmo{\Img}{Im}
\dmo{\im}{im}
\dmo{\Ind}{Ind}
\dmo{\ind}{ind}
\dmo{\Inj}{Inj}
\dmo{\inj}{inj}
\dmo{\IPI}{Sp}
\dmo{\ipi}{sp}
\dmo{\kdim}{\mathrm{kdim}}
\dmo{\Ker}{Ker}
\dmo{\level}{\mathrm{level}}
\dmo{\Loc}{Loc}
\dmo{\loc}{\mathrm{loc}}
\dmo{\MCM}{MCM}
\dmo{\mcone}{cone}
\dmo{\modname}{mod}%
\dmo{\Modu}{\mathsf{Mod}}
\dmo{\modu}{\mathsf{mod}}
\dmo{\Mod}{Mod}
\dmo{\nil}{nil}
\dmo{\ob}{Ob}
\dmo{\opname}{op}
\dmo{\pdim}{pd}
\dmo{\Perf}{\mathrm{Perf}}
\dmo{\Ph}{Ph}
\dmo{\Pic}{Pic}
\dmo{\Proj}{Proj}
\dmo{\proj}{proj}
\dmo{\Psh}{Psh}
\dmo{\QCoh}{QCoh}
\dmo{\Qcoh}{Qcoh}
\dmo{\qGr}{\mathsf{qGr}}
\dmo{\qgr}{\mathsf{qgr}}
\dmo{\rank}{rank}
\dmo{\restrict}{res}
\dmo{\res}{res}
\dmo{\RHom}{\mathbf{R}Hom}
\dmo{\rk}{rk}
\dmo{\Rname}{R}
\dmo{\RsHom}{\mathbf{R}\mathcal{H}\mathit{om}}
\dmo{\Schs}{Sch/S}
\dmo{\Sch}{Sch}
\dmo{\sExt}{\mathcal{E}\mathit{xt}}
\dmo{\sGInj}{\underline{\GInj}}
\dmo{\sGr}{\underline{\mathsf{Gr}}}
\dmo{\sgr}{\underline{\mathsf{gr}}}
\dmo{\Shn}{Sh_{Nis}}
\dmo{\sHom}{\mathcal{H}\mathit{om}}
\dmo{\Sh}{Sh}
\dmo{\SH}{SH}
\dmo{\Sing}{Sing}
\dmo{\smallb}{b}
\dmo{\sMCM}{\underline{\MCM}}
\dmo{\smodname}{\underline{mod}}%
\dmo{\sModu}{\underline{\mathsf{Mod}}}
\dmo{\smodu}{\underline{\mathsf{mod}}}
\dmo{\sMod}{\underline{Mod}}%
\dmo{\smod}{\underline{mod}}%
\dmo{\Spc}{Spc}
\dmo{\Spech}{Spec^h}
\dmo{\Spec}{Spec}
\dmo{\Split}{Split}
\dmo{\stbik}{\mathsf{strel}}
\dmo{\stBIK}{\mathsf{{Rel}}}
\dmo{\Stmodu}{\mathsf{StMod}}
\dmo{\stmodu}{\mathsf{stmod}}
\dmo{\Stmod}{\mathsf{StRel}}
\dmo{\stmod}{\mathsf{strel}}
\dmo{\supph}{supph}
\dmo{\supp}{supp}
\dmo{\thick}{thick}
\dmo{\Thick}{\mathrm{Thick}}
\dmo{\Th}{Th}
\dmo{\tlevel}{\mathrm{level}^\otimes}
\dmo{\Tor}{Tor}
\dmo{\Tot}{Tot}
\dmo{\uHom}{\underline{Hom}}
\dmo{\Vis}{Vis}
\nc\EE{\mathbb{E}}
\nc\mcC{\mathcal{C}}
\nc\mcH{\mathcal{H}}
\nc\mcI{\mathcal{I}}
\nc\mcN{\mathcal{N}}
\nc\mcQ{\mathcal{Q}}
\nc\mcS{\mathcal{S}}
\nc\scE{\mathscr{E}}
\nc\scS{\mathscr{S}}
\nc\ZZ{\mathbb{Z}}
\nc{\adjto}{\rightleftarrows}
\nc{\adj}{\dashv}
\nc{\aka}{{a.\,k.\,a.}\ }
\nc{\bbZ}{\mathbb{Z}}
\nc{\bpi}{\bar{\pi}}
\nc{\calI}{\mathcal{I}}
\nc{\caT}[1]{\cat{#1}}
\nc{\cat}[1]{\mathscr{#1}}
\nc{\cK}{\cat{K}}
\nc{\colim}{\mathop{\mathrm{colim}}}
\nc{\Db}{\Der^{\smallb}}
\nc{\EC}{\scE_{\cat{C}}}
\nc{\Edist}{\scE_{\textrm{\rm dist}}}
\nc{\eg}{{\sl e.g.}}
\nc{\Endcat}[1]{\End_{\cat #1}}
\nc{\Esplit}{\scE_{\textrm{\rm split}}}
\nc{\gm}{\mathfrak{m}}
\nc{\Greg}[1]{{\color{CarnationPink}#1}}
\nc{\Homcat}[1]{\Hom_{\cat #1}}
\nc{\hook}{\hookrightarrow}
\nc{\ideal}[1]{\langle #1\rangle}
\nc{\ie}{{i.e.}\ }
\nc{\ihom}{{\mathsf{hom}}} 
\nc{\iinj}{\,\text{-}\inj}%
\nc{\into}{\mathop{\rightarrowtail}}
\nc{\inv}{^{-1}}
\nc{\isotoo}{\overset{\sim}{\,\too\,}}
\nc{\isoto}{\overset{\sim}{\,\to\,}}
\nc{\kk}{\Bbbk}
\nc{\loccit}{{\sl loc.\ cit.}}
\nc{\matrice}[1]{\begin{pmatrix} #1 \end{pmatrix}}
\nc{\mcA}{\cat{A}}
\nc{\mcB}{\cat{B}}
\nc{\Mid}{\,\big|\,}
\nc{\mmod}{\modname\kern-0.1em\text{-}}%
\nc{\MMod}{\text{-}\kern-0.1em\Mod}%
\nc{\NE}{\mcN_{\kern-.1em\scE}}
\nc{\NF}{\mcN_{\kern-.1em F}}
\nc{\onto}{\mathop{\twoheadrightarrow}}
\nc{\op}{^{\opname}}
\nc{\otoo}[1]{\overset{#1}{\,\too\,}}
\nc{\oto}[1]{\overset{#1}\to}
\nc{\ourfrac}[2]{\genfrac{}{}{0pt}{}{\scriptstyle #1}{\scriptstyle #2}}
\nc{\oursetminus}{\!\smallsetminus\!}
\nc{\Paul}[1]{{\color{OliveGreen}#1}}
\nc{\potimes}[1]{^{\otimes #1}}
\nc{\Pout}[1]{\Paul{\sout{#1}}}
\nc{\pproj}{\,\text{-}\proj}%
\nc{\qquadtext}[1]{\qquad\textrm{#1}\qquad}
\nc{\quadtext}[1]{\quad\textrm{#1}\quad}
\nc{\Rcatb}[1]{\Rcat{#1}^\sbull}
\nc{\Rcat}[1]{\Rname_{\cat #1}}
\nc{\restr}[1]{_{|_{\scriptstyle #1}}}
\nc{\rfc}{(\mathbf{rf1^c})}
\nc{\rff}{(\mathbf{rf2})}
\nc{\rf}{(\mathbf{rf1})}
\nc{\RKb}{\Rcatb{K}}
\nc{\RKub}{\Rcat{K,u}^\sbull}
\nc{\RLvb}{\Rcat{L,v}^\sbull}
\nc{\sbull}{{\scriptscriptstyle\bullet}}
\nc{\SET}[2]{\big\{\,#1\Mid#2\,\big\}}
\nc{\SE}{\scS_{\scE}}
\nc{\SKB}{\STAB{\cat{K}}{B}}
\nc{\SKEp}{\STAB{\cat{K}}{\scE'}'}
\nc{\SKE}{\STAB{\cat{K}}{\scE}}
\nc{\SKF}{\STAB{\cat{K}}{F}}
\nc{\smat}[1]{\left(\begin{smallmatrix} #1 \end{smallmatrix}\right)}
\nc{\SpcK}{\Spc(\cat{K})}
\nc{\STAB}[2]{\underline{#1}_{#2}}
\nc{\then}{\Rightarrow}
\nc{\threestars}{\medbreak\begin{center}*\ *\ *\end{center}\medbreak}
\nc{\too}{\mathop{\longrightarrow}\limits}
\nc{\ttclass}{$\otimes$-class}
\nc{\tthick}{\thick^{\otimes}}
\nc{\ucone}{\underline{\cone}}
\nc{\unit}{\mathbb{1}}
\nc{\Verd}{/\!\!/}
\nc{\xto}[1]{\xrightarrow{#1}}
\begin{document}


\title{Relative stable categories and birationality}
\author{Paul Balmer}
\author{Greg Stevenson}
\date{2021 May 7}

\address{Paul Balmer, Mathematics Department, UCLA, Los Angeles, CA 90095-1555, USA}
\email{balmer@math.ucla.edu}
\urladdr{http://www.math.ucla.edu/~balmer}

\address{Greg Stevenson, School of Mathematics and Statistics,
University of Glasgow,
University Place,
Glasgow G12 8QQ
}
\email{gregory.stevenson@glasgow.ac.uk}
\urladdr{http://www.maths.gla.ac.uk/~gstevenson/}

\begin{abstract}
We propose a general method to construct new triangulated categories, \emph{relative stable categories}, as additive quotients of a given one. This construction enhances results of Beligiannis, particularly in the tensor-triangular setting. We prove a birationality result showing that the original category and its relative stable quotient are equivalent on some open piece of their spectrum.
\end{abstract}

\subjclass[2010]{18E30}
\keywords{Relative stable categories, tensor-triangular geometry}

\thanks{PB supported by NSF grant~DMS-1600032.}

\maketitle


\vskip-\baselineskip\vskip-\baselineskip
\tableofcontents


\section{Introduction}


Traditionally, given a triangulated category $\cK$, it is difficult to generate new triangulated categories from it. One can, of course, take Verdier quotients or pass to thick subcategories. However, this is quite restrictive and many properties of $\cK$ will be inherited by any thick subcategory or quotient, which can be a curse as much as a blessing.

In recent years there have been breakthroughs which allow us to produce new triangulated categories from a given one\textemdash{}without recourse to an enhancement. In \cite{Balmer11} the first-named author showed how to produce triangulated categories of modules over separable monads. Even more recently, Neeman has shown that one can complete a triangulated category with respect to a metric (see \cite{neeman2020metrics} for a survey of Neeman's results). This host of new examples have already had profound implications for our understanding of geometry and representation theory.

This article is built upon a similarly profound construction, due to Beligiannis~\cite{Beligiannis00}, which also yields new triangulated categories. Despite seeming relatively unknown, it predates the two constructions mentioned above. Beligiannis' insight is that by treating a triangulated category equipped with a well behaved class of triangles as if it were a Frobenius exact category one can stabilize to produce a new triangulated category.

It is not hard to produce such well behaved classes of triangles: We show in Theorem~\ref{thm:Wirthmuller} that any sufficiently nice triple of adjoints gives rise to one. There are many examples of such triples, and hence of such stabilizations. For instance any spherical functor gives rise to a stable category measuring how far away the spherical functor is from being faithful (\Cref{rem:Wirth}).

\threestars

For the rest of the introduction, and for most of the article, we focus on the tensor triangulated case. Let $B$ be a rigid (\aka strongly dualizable) object in a tensor-triangulated category~$\cK$. The \emph{stable category of~$\cK$ relatively to~$B$} is the additive quotient
\[
\SKB=\frac{\cK}{B\otimes\cK}
\]
that is, the category obtained by keeping the same objects as~$\cK$ and by modding out the morphisms that factor via a tensor-multiple of~$B$, \ie via an object of~$B\otimes\cK$. The name comes from the fact that objects~$X,Y$ of~$\cK$ become isomorphic in~$\SKB$ if and only if they become `stably' isomorphic $X\oplus P\simeq Y\oplus Q$ in~$\cK$ after adding direct summands $P,Q$ of objects in~$B\otimes \cK$. (See \Cref{lem:iso}.)

This quotient $\SKB$ should not be confused with the more standard Verdier quotient (localization) of~$\cK$ by the thick subcategory generated by~$B$: We quantify in \Cref{lem:notexact1} the extent to which these constructions disagree. Nonetheless, the category~$\SKB$ remains a \emph{tensor-triangulated category}, with triangulated structure first constructed by Beligiannis~\cite{Beligiannis00}, as discussed above.

Before analyzing~$\SKB$, let us emphasize the warm generality in which our discussion is basking. Among a much longer list of examples, here are three classics:
\begin{itemize}
\item[-]
$B$ could be a perfect complex in the derived category
of a scheme;
\item[-]
$B$ could be a finite genuine $\Gamma$-spectrum in the equivariant stable homotopy category of a compact Lie group~$\Gamma$ (even $\Gamma=1$ is interesting);
\item[-]
$B$ could be a finite-dimensional representation in the stable module category of a finite group.
\end{itemize}

Historically, it seems that relative stable categories first appeared in the third setting, \ie in modular representation theory, for instance in~\cite{CarlsonPengWheeler98}. In recent years, those categories have generated renewed interest, in connection with cluster algebras~\cite{Jorgensen10}, around Brou\'e's conjecture~\cite{WangZhang18} or even closer to tensor-triangular geometry in~\cite{BalandChirvasituStevenson19,BalandStevenson19,Carlson18}.

\medbreak

So what do we know about the tt-category $\SKB$ in general? Well, very little. In the recent~\cite[Section~5]{Carlson18}, Carlson shows that~$\SKB$ behaves in a `non-noetherian' fashion already in modular representation theory; for instance, $\SKB$ can contain infinite towers of thick tensor-ideals. In other words, the methods that were successful for the ordinary stable category~\cite{BensonCarlsonRickard97}, where cohomology is noetherian, have to give way to abstract tensor-triangular geometry: Thick tensor-ideals are classified by the spectrum~$\Spc(\SKB)$ of~\cite{Balmer05a} and one should try to understand that space.

It is natural to ask how this spectrum $\Spc(\SKB)$ relates to the spectrum~$\SpcK$ of the tt-category we start from. To understand the difficulty, note that the natural quotient functor $\pi\colon \cK\to \SKB$ is a tensor-functor but \emph{not a triangulated functor}. In fact, the suspensions in~$\cK$ and $\SKB$ differ by twisting with an invertible object~$F_B$ in~$\SKB$. So it does not even make sense to ask whether the image of a distinguished triangle in~$\cK$ remains distinguished in~$\SKB$, for it is not even a triangle! Consequently, we cannot produce a continuous map $\Spc(\SKB)\to \SpcK$ by using functoriality of~$\Spc(-)$ and we do not know whether such a map exists in general. (Compare \Cref{rem:pi^!} though.) Inspired by \cite{BalandStevenson19}, we can however prove a birationality result relating~$\cK$ and~$\SKB$.
\begin{Thm}[{Birationality Theorem~\ref{thm:birationality}}]
\label{thm:birationality-intro}%
There exists an object $C\in\SKB$ with the following property. Consider the open subsets $U=\SpcK\oursetminus\supp(B)$ and $V=\Spc(\SKB)\oursetminus\supp(C)$ and the respective localizations $\cK\to \cK(U)$ and $\SKB\to \SKB(V)$. Then there exists a commutative diagram
\[
\xymatrix{
\cK \ar[d]_-{\res_U} \ar[r]^-{\pi}
& \SKB \ar[d]^-{\res_V}
\\
\cK(U) \ar[r]^-{\cong}
& \SKB(V)
}
\]
in which the lower functor is a tensor-\emph{triangulated} equivalence $\cK(U)\cong \SKB(V)$.
In particular these open subspaces $U$ and $V$ are homeomorphic.
\end{Thm}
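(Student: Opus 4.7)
The plan is to produce a tt-functor in the opposite direction, $\bar\pi\colon\SKB\to\cK(U)$, to factor it through a Verdier localization $\SKB\to\SKB(V)$, and to show the resulting functor is a tt-equivalence. The central observation is that on the open $U=\SpcK\oursetminus\supp(B)$ the object $B$ becomes isomorphic to zero, so $B\otimes X\cong 0$ for every $X\in\cK(U)$. Consequently the additive ideal defining the relative stable category is already trivial after restricting to $U$, and $\STAB{\cK(U)}{B}=\cK(U)$ canonically.

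To construct $\bar\pi$, note that any morphism in $\cK$ factoring through an object of $B\otimes\cK$ factors through $0$ in $\cK(U)$, hence vanishes there. By the universal property of the additive quotient $\pi\colon\cK\to\SKB$, the localization $\cK\to\cK(U)$ then factors uniquely as $\cK\xto{\pi}\SKB\xto{\bar\pi}\cK(U)$, and $\bar\pi$ is visibly a tensor-functor. The main obstacle, which I expect to be the technical heart of the argument, is to verify that $\bar\pi$ is triangulated for Beligiannis' structure on $\SKB$. The suspension on $\SKB$ differs from $\Sigma_\cK$ by a twist with the invertible object $F_B\in\SKB$, which is built from the duality data of $B$ (\eg via $\coev_B\colon\unit\to B\otimes B^\vee$); since $B\cong 0$ in $\cK(U)$, the twist $F_B$ becomes canonically trivial there, and with care Beligiannis triangles in $\SKB$ can be matched with genuine distinguished triangles in $\cK(U)$.

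It remains to exhibit $C$ and identify the kernel of $\bar\pi$. This kernel is a thick tensor-ideal of $\SKB$, and the task is to find a single generator $C$ for it. Since $\pi(B)=0$, this $C$ cannot be $B$ itself but must come from a more subtle object of $\thick^\otimes(B)\subset\cK$: a natural candidate is $\pi$ of a cone such as $\cone(\coev_B)$, possibly corrected by an $F_B$-twist. Setting $V=\Spc(\SKB)\oursetminus\supp(C)$, the universal property of Verdier quotients lets $\bar\pi$ descend to a tt-functor $\SKB(V)\to\cK(U)$, which is essentially surjective (since $\cK\to\cK(U)$ is), faithful (by the choice of $V$) and full (as every morphism in $\cK(U)$ lifts to a fraction in $\cK$ and hence to a morphism in $\SKB$). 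It is therefore a tt-equivalence; its inverse supplies the lower arrow of the commutative square, and applying $\Spc(-)$ yields the asserted homeomorphism $U\cong V$.
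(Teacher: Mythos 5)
Your overall architecture is workable and close in spirit to the paper's: the localization $\res_U\colon\cK\to\cK(U)$ kills $\add(B\otimes\cK)$, hence factors through $\pi$, and the resulting functor is exact because $\xi_B\colon F_B\to\unit$ (the fibre of $\coev$) becomes invertible on $U$, so the twist between $\Sigma_B$ and $\underline{\Sigma}$ trivializes; this is essentially what \Cref{thm:3rdIT-birationally} and \Cref{cor:generalBS} provide. But there are two genuine gaps at the point you yourself flag as the crux. First, your candidate for $C$ is wrong: $\cone(\coev_B)\simeq\Sigma F_B$, and $\pi(F_B)$ is $\otimes$-invertible in $\SKB$ (\Cref{thm:triangulation-SKB}\,(a)\textemdash{}it is exactly the suspension twist), so $\supp_{\SKB}(\pi\cone(\coev_B))=\Spc(\SKB)$ and your $V$ would be empty; no ``correction by an $F_B$-twist'' can help, since twisting by an invertible preserves invertibility. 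The correct object is $C=\cone(\xi_B\potimes{2})$: by \Cref{thm:triangulation-SKB}\,(c) the cone in $\SKB$ of $\pi(f)$ is $\pi(\cone(\xi_B\otimes f))$, so $\pi(C)$ is the cone of $\pi(\xi_B)$, which is the morphism actually measuring the failure of $\pi$ on $\thick^\otimes(B)$.

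Second, and more seriously, the statement you need for both fullness and faithfulness\textemdash{}that the object-kernel of your $\bar\pi$, namely $\pi(\thick^\otimes(B))$, equals the single-generated tt-ideal $\thick^\otimes_{\SKB}(\pi C)$\textemdash{}receives no argument; ``faithful (by the choice of $V$)'' is circular, since $V$ is defined through the as-yet-unidentified $C$, and in general the Verdier quotient of $\SKB$ by the kernel of a functor need not map fully faithfully anywhere. This identification is the technical heart of the paper's proof and is a tensor-nilpotence argument: since $\xi_B\otimes B=0$, \Cref{lem:nilp} shows $\thick^\otimes(B)=\SET{X}{\xi_B\potimes{n}\otimes X=0\text{ for }n\gg0}$; a short diagram chase (using that any map factoring through $B\otimes\cK$ composes to zero with $\xi_B$) shows $\pi(\thick^\otimes(B))$ is precisely the locus where $\pi(\xi_B)$ is $\otimes$-nilpotent in $\SKB$; and applying \Cref{lem:nilp} again, now inside the rigid tt-category $\SKB$ to the morphism $\pi(\xi_B)$ with cone $\pi(C)$, yields $\pi(\thick^\otimes(B))=\thick^\otimes_{\SKB}(\pi C)$. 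With that in hand your fraction-calculus argument for fullness and faithfulness of $\SKB(V)\to\cK(U)$ can be completed (the paper instead quotes \Cref{cor:generalBS} directly and idempotent-completes), but as written the proposal is missing both the correct generator and the nilpotence argument that makes $V$ the right open set.
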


In other words, although the functor $\pi\colon \cK \to \SKB$ is not even triangulated, let alone an equivalence, there is a `common' open piece of the spectra $\SpcK$ and $\Spc(\SKB)$ on which $\pi$ becomes a triangulated equivalence.

\medbreak

Let us quickly outline the content of this work. After recalling basic terminology and notation in \Cref{se:prelims}, we spend a few pages discussing the general framework of stable categories, without assuming the existence of a tensor product. We hope our condensed presentation of the general theory will be useful to some readers beyond the above tt-geometric preoccupations. Specifically, \Cref{se:proper-E} recalls the notion of proper class of exact triangles and \Cref{se:Beligiannis} discusses the triangulation.

We also provide a pre-tensor version of the above birationality result in \Cref{se:thick}, see \Cref{cor:generalBS}, after an analysis of the thick triangulated subcategories of the stable category in terms of the corresponding (non-triangulated) subcategories of~$\cK$.

\Cref{se:algebraicity} is an interlude. We prove that if we start with an \emph{algebraic} triangulated category~$\cK$ then all stable categories built out of~$\cK$ remain algebraic.
Recall that a triangulated category $\cat{K}$ is algebraic, in the sense of Keller \cite{Keller2007differential}, if it can be presented as the stable category of a Frobenius exact category; an equivalent definition is that $\cat{K}$ can be presented as the homotopy category of a differential-graded category. Knowing that $\cat{K}$ is algebraic allows, for instance, the invocation of results from tilting theory.
\begin{Thm}[{\Cref{thm:algebraicity}}]
\label{thm:algebraicity-intro}
The relative stable category of an algebraic triangulated category remains algebraic.
\end{Thm}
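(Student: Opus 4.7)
The plan is to promote a Frobenius presentation of $\cK$ to a Frobenius presentation of $\SKE$ by refining the exact structure. Since $\cK$ is algebraic, I start by fixing a Frobenius exact category $(\cat{F},\scS)$ with an equivalence $\cK\simeq\underline{\cat{F}}$, and I write $\cat{P}\subseteq\cat{F}$ for its subcategory of projective-injective objects. Let $\cat{I}\subseteq\cat{F}$ be the full subcategory of those objects whose image in $\cK$ lies in $\IPI(\scE)$, so that $\cat{P}\subseteq\cat{I}$ automatically. The goal is to equip $\cat{F}$ with a second Frobenius exact structure $\scS'$ whose projective-injective subcategory is exactly $\cat{I}$; its stable category will then realize $\SKE$.

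The natural candidate declares an $\scS$-conflation $X\to Y\to Z$ to lie in $\scS'$ iff its image triangle $X\to Y\to Z\to\Sigma X$ in $\cK$ belongs to the proper class $\scE$. Split sequences are always in $\scS'$ since split triangles lie in every proper class. My first step is to verify that $\scS'$ is an exact structure on $\cat{F}$. Closure of $\scS'$-inflations and $\scS'$-deflations under composition, and stability under pushout and pullback, should reduce directly to the parallel closure properties built into the axioms of a proper class of triangles recalled in \Cref{se:proper-E}: the pushouts and pullbacks themselves are inherited from $(\cat{F},\scS)$, and one need only check that the resulting image triangle still lies in $\scE$, which is exactly what a proper class guarantees.

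The next step is to show that $(\cat{F},\scS')$ is Frobenius with projective-injective subcategory $\cat{I}$. For the relative projectivity of an object $I\in\cat{I}$: any $\scS'$-conflation projects to an $\scE$-triangle in $\cK$ on which $\Hom_{\cK}(I,-)$ is exact by definition of $\IPI(\scE)$; surjectivity of $\Hom_{\cat{F}}(I,Y)\to\Hom_{\cat{F}}(I,Z)$ then follows after correcting any discrepancy by a factorization through $\cat{P}$, which lifts because $\cat{P}$ is $\scS$-projective and $\scS'$-deflations are in particular $\scS$-deflations. Dually for injectivity. Enough $\scS'$-projectives and injectives are supplied by the hypothesis underlying Beligiannis's construction: every $X\in\cat{F}$ admits an $\scE$-triangle $X'\to P\to X\to\Sigma X'$ in $\cK$ with $P\in\IPI(\scE)$, and any lift of this triangle to a short exact sequence in $(\cat{F},\scS)$ lies automatically in $\scS'$ with middle term in $\cat{I}$.

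The main obstacle is the final step: upgrading the additive identification to a triangle equivalence $\underline{(\cat{F},\scS')}\simeq\SKE$. On underlying additive categories the identification is formal,
\[
\underline{(\cat{F},\scS')}\;=\;\cat{F}/\cat{I}\;=\;(\cat{F}/\cat{P})/\IPI(\scE)\;=\;\cK/\IPI(\scE)\;=\;\SKE.
\]
For the triangulation, both the Frobenius suspension on $\underline{(\cat{F},\scS')}$ and Beligiannis's suspension on $\SKE$ are computed from the same recipe: choose an $\scS'$-inflation $X\hookrightarrow I$ with $I\in\cat{I}$ and take the cokernel, whose image sits in an $\scE$-triangle $X\to I\to F(X)\to\Sigma X$ in $\cK$ with $I\in\IPI(\scE)$---exactly the datum Beligiannis uses to define the suspension. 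Standard triangles in $\underline{(\cat{F},\scS')}$ and distinguished triangles in $\SKE$ then both arise from the same underlying $\scE$-triangles in $\cK$, so once the suspensions are matched up coherently, the triangulated equivalence follows, exhibiting $\SKE$ as the stable category of the Frobenius exact category $(\cat{F},\scS')$ and therefore as algebraic.
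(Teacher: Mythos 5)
Your proposal is correct and takes essentially the same route as the paper: you refine the exact structure on a Frobenius model $\cat F$ of $\cK$ by declaring a conflation admissible exactly when its image triangle under $\varpi\colon\cat F\to\cK$ lies in $\scE$ (the paper's class $\SE$), identify the new projective-injectives with $\varpi\inv(\proj(\scE))$, and conclude via the third isomorphism theorem for additive quotients. The only point the paper treats more carefully is the existence of enough $\SE$-projectives: rather than lifting the $\scE$-triangle itself (whose lift a priori ends only at an object stably isomorphic to the given $X$), one lifts the $\proj(\scE)$-precover to a morphism $q\colon Q\to X$ in $\cat F$ and corrects it to an $\scS$-deflation $\smat{q&p}\colon Q\oplus P\to X$ by adding an $\scS$-projective cover, which does not change the image under~$\varpi$.
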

This has been proved by Beligiannis in~\cite[Corollary~3.18]{Beligiannis13} under a strong Ext-vanishing condition. We give a proof in complete generality.

The tensor enters the paper only in \Cref{se:tt-stab}, where we discuss how the relative stable category inherits a tensor and we make other preparations. We are then ready for \Cref{se:birationality} where we wrap everything up and give the proof of \Cref{thm:birationality-intro}.

\bigbreak
\textbf{Acknowledgments}: We are thankful to Apostolos Beligiannis and Jon Carlson, for valuable discussions around relative categories and references.

\goodbreak
\section{Preliminaries}
\label{se:prelims}%
\medbreak

We begin by setting some conventions and introducing some basic notation which will be used throughout. We do not recall details of the ever expanding yoga of tt-geometry here; in fact, tensor products do not appear until Section~\ref{se:tt-stab} and we utilize only fairly minimal machinery. Nonetheless, for a gentle introduction the interested reader should consult~\cite{BalmerICM}.

Our conventions for the various notions of equivalence and isomorphism, and our standing closure conditions on subcategories, are relatively standard.

\begin{Conv}
We write $\cong$ for canonical (or natural) isomorphisms. We write $\simeq$ for a mere isomorphism for which we do not claim any naturality/uniqueness. We write = for strict equalities, unique isomorphisms, or when we are lying desperately.
\end{Conv}

\begin{Conv}
Throughout all subcategories are assumed to be full and replete, i.e.\ closed under isomorphisms, unless explicitly mentioned otherwise.
\end{Conv}

\begin{Conv}
Let $\cat{K}$ be a triangulated category and consider an exact triangle
\begin{displaymath}
X\otoo{f} Y \otoo{g} Z \otoo{h} \Sigma X
\end{displaymath}
in $\cat{K}$. We will call $Z=\cone(f)$ the cone of~$f$ (really the cone is the map $g$ but this is a standard abuse). We sometimes also say that $g$ (or just~$Z$) is the cofibre of~$f$, or that $f$ (or just~$X$) is the fibre of~$g$.
\end{Conv}

Now we come to some notation that will be important throughout. The first piece is again standard.

\begin{Not}
Given a set $\mcA$ of objects in $\cat{K}$ we denote by $\add(\mcA)$ the closure of $\mcA$ under finite sums and summands.
\end{Not}

Next comes some more specialized notation. We will have cause to deal with both thick subcategories, i.e.\ kernels of exact functors on objects, and certain ideals of morphisms, i.e.\ kernels of additive functors on maps. We thus need notation to distinguish these two types of kernel.

\begin{Not}
Given an additive functor $F\colon \cat{K} \to \cat L$ we set
\begin{displaymath}
\ker F =\{ f\in \cat{K}^{\bullet\to\bullet}\;\vert\; F(f)=0\},
\end{displaymath}
to be the kernel of $F$ on morphisms (\ie the ideal of $F$-phantoms). We will use
\begin{displaymath}
\Ker F = \{X\in \cat{K}\;\vert\; F(X)\cong 0\},
\end{displaymath}
note the uppercase ``K'', for the kernel on objects. We denote by $\Img(F)$ the essential image of~$F$.
\end{Not}

We thus have two flavours of quotient at our disposal: the additive quotient by an ideal of maps, and the Verdier quotient by a thick subcategory of objects (which is really a localization). We introduce notation to distinguish between these two operations.

\begin{Not}
If $\cat{K}$ is a small additive category and $\cat{N}$ is a full subcategory closed under sums and summands we can form the ideal $\mcI$ of maps which factor via $\cat{N}$. We will often write $\cat{K}/\cat{N}$ for the additive quotient $\cat{K} / \mcI$.
\end{Not}

\begin{Not}
\label{not:Verd}%
We denote the \emph{Verdier quotient} of a triangulated category~$\cat{K}$ by a thick subcategory~$\cat{J}\subseteq\cat{K}$ (\ie the localization of~$\cat{K}$ with respect to the morphisms whose cone belongs to~$\cat{J}$) by $\cat{K}\Verd\cat{J}$. This non-standard notation replaces the usual `$\cat{K}/\cat{J}$' in order to prevent confusion with the additive quotient. Note that the Verdier quotient factors via the additive quotient\,:
\[
\xymatrix@C=6em{
\cat{K} \ar[r]^-{\textrm{additive quotient}} \ar@/_1em/[rr]_-{\textrm{Verdier quotient}}
& \cat{K}/\cat{J} \ar[r]^-{\exists\,!}
& \cat{K}\Verd\cat{J}
}
\]
thus justifying the $\Verd$ notation.
\end{Not}

\begin{Rec}
\label{rec:exact}%
We recall some basic facts concerning exact categories; we refer the reader to \cite{Buehler10} for a thorough treatment. An \emph{exact} category $\cat{F}$ is an additive category together with a chosen class $\mcS$ of so-called \emph{admissible} short exact sequences $\xymatrix@C=2em{M'\ar@{ >->}[r]^-{i} & M\ar@{->>}[r]^-{p} & M''}$, consisting of admissible mono\-morphisms~$i$ and admissible epimorphisms~$p$, and satisfying the following axioms: $\mcS$ is closed under isomorphism, contains the split exact sequences and consists only of kernel-cokernel sequences; admissible monomorphisms (epimorphisms) are closed under composition; finally the pushouts of admissible monomorphisms along any morphism exist and remain admissible monomorphisms, and dually for pullbacks of admissible epimorphisms.
\end{Rec}

\begin{Rec}
\label{rec:Frobenius}%
The notions of projective and injective objects in an exact category $\cat{F}$ are defined as usual but with respect to the sequences in~$\mcS$. Projective precovers are admissible epimorphisms with projective source, and dually for injective preenvelopes. An exact category~$\cat{F}$ is \emph{Frobenius} if every object has a projective precover and an injective preenvelope and if injectives and projectives coincide. In that case, the additive quotient of $\cat{F}$ by the projective-injectives, usually denoted $\underline{\cat{F}}$, has a natural triangulated structure. The suspension of $X\in \underline{\cat{F}}$ is given by the cokernel of an injective preenvelope, which can be shown to be well-defined and functorial in $\underline{\cat{F}}$. Given an exact sequence $\xymatrix@C=2em{M'\ar@{ >->}[r]^-{i} & M\ar@{->>}[r]^-{p} & M''}$ there is a commutative diagram
\begin{displaymath}
\xymatrix@R=2em{
M'\ar@{ >->}[r]^-{i} \ar@{=}[d] & M\ar@{->>}[r]^-{p} \ar@{-->}[d]^-{\exists} & M'' \ar@{-->}[d]^-\exists \\
M' \ar@{ >->}[r] & E(M') \ar@{->>}[r] & \Sigma M'
}
\end{displaymath}
where $\xymatrix@C=2em{M'\ar@{ >->}[r]^-{} & E(M')}$ is an injective preenvelope. Going along the top row and then down we get $M'\to M \to M'' \to \Sigma M'$ and the distinguished triangles in $\underline{\cat{F}}$ are precisely those isomorphic to sequences of this form. Further details can be found, for instance, in \cite[Chapter I.2]{Happel88}.
\end{Rec}

\goodbreak
\section{Proper classes of triangles}
\label{se:proper-E}%
\medbreak

There has been a proliferation of terminology, notation, and approaches in relative homological algebra dating from the infancy of the subject. Our reference is Beligiannis~\cite{Beligiannis00}\,(\footnote{Beware that, in~\cite{Beligiannis00}, composition of morphisms $f g$ means $g$ \emph{after}~$f$, although the rule is the usual (reversed) one for functors. At the risk of stunning the reader with consistency, we here write both composition of morphisms and composition of functors in the usual right-to-left order.}). As the latter is rather long, we give a brief overview of some of the nomenclature for the reader's convenience. This section is a preparation for Beligiannis's \Cref{thm:Beligiannis} on the triangulation of relative stable categories. We discuss examples at the end of the section, starting with \Cref{def:F-triangle}.

\begin{Rem}
\label{rem:E-ntuition}%
Relative homological algebra in our triangulated category~$\cat{K}$ is indeed \emph{relative} to a given family of distinguished triangles~$\scE$, whose precise properties we enunciate after this short conceptual warm-up. Most importantly, the family~$\scE$ is not required to be closed under rotation of triangles: We do care very much about the order of the maps in the triangles of~$\scE$. One could think of triangles in~$\scE$
\begin{equation}
\label{eq:E-triangle}%
X\otoo{f} Y \otoo{g} Z \otoo{h} \Sigma X
\end{equation}
as `generalized exact sequences', in which $f$ is a `generalized monomorphism', $g$ a `generalized epimorphism' and $h$ a `generalized zero'. We call them respectively
\begin{center}
\emph{$\scE$-monomorphisms} (the $f$'s), \emph{$\scE$-epimorphisms} (the $g$'s) and \emph{$\scE$-phantoms} (the $h$'s).
\end{center}
We denote the ideal of $\scE$-phantoms by~$\Ph_{\scE}=\SET{h}{\textrm{there is a triangle~\eqref{eq:E-triangle} in }\scE}$.
\end{Rem}

\begin{Hyp}
\label{hyp:E}%
We assume that $\scE$ is a family of distinguished triangles in~$\cat{K}$ closed under isomorphisms, suspension (\ie triple rotation), sums and retracts, and that it contains the trivial triangles \mbox{$X\oto{\id}X\to 0\to \Sigma X$} and \mbox{$0\to Y\oto{\id}Y\to 0$}; we further assume that $\scE$-monomorphisms are closed under homotopy pushouts and $\scE$-epimorphisms are closed under homotopy pullbacks. Finally we assume~$\scE$ to be \emph{saturated} (see~\cite[Section~2.2]{Beligiannis00}) in the sense that if the homotopy pull-back of a map~$g$ along an $\scE$-epimorphism is an $\scE$-epimorphism then $g$ is already an $\scE$-epimorphism. (The dual statement with $\scE$-monomorphisms is equivalent.) Explicitly, given a commutative diagram with distinguished rows and columns
\begin{displaymath}
\xymatrix@C=1.5em@R=1.5em{
0 \ar[r] \ar[d] & X \ar@{=}[r] \ar[d] & X \ar[r] \ar[d] & 0 \ar[d] \\
W \ar@{=}[d] \ar[r] & Y \ar[r] \ar[d] & Y' \ar[r] \ar[d] & \Sigma W \ar@{=}[d] \\
W \ar[r] \ar[d] & Z \ar[r]^-{g} \ar[d] & Z' \ar[r] \ar[d] & \Sigma W \ar[d] \\
0 \ar[r] & \Sigma X \ar@{=}[r] & \Sigma X \ar[r] & 0
}
\end{displaymath}
if the third column $X \to Y' \to Z' \to$ and the second row $W \to Y \to Y' \to $ belong to~$\scE$ then the third row $W \to Z \to Z' \to$ also lies in~$\scE$.

In the language of~\cite{Beligiannis00} one says that $\scE$ is a \emph{proper class of triangles}.
\end{Hyp}

\begin{Rem}
\label{rem:saturated}%
An ideal of morphisms~$\mcI$ in~$\cat{K}$ is called \emph{saturated} (with respect to~$\scE$) if $\mcI$ contains every morphism~$k$ such that $kg$ belongs to~$\mcI$ for some $\scE$-epimorphism~$g$; equivalently, if $\mcI$ contains every morphism $k$ such that $fk$ belongs to~$\mcI$ for~$f$ an $\scE$-monomorphism.
For instance, the saturation property for~$\scE$ expressed in \Cref{hyp:E} is precisely asking the ideal $\mcI=\Ph_\scE$ of $\scE$-phantoms to be a saturated ideal. Furthermore, this ideal~$\mcI$ determines the proper class of triangles~$\scE$, as those distinguished triangles whose third map belongs to~$\mcI$ (cf.~\cite[Section~2]{Beligiannis00}).
\end{Rem}

\begin{Exas}
\label{exa:proper-E}%
There are many examples of proper classes of triangles~$\scE$.
\begin{enumerate}[(1)]
\item
\label{it:proper-E-trivial}%
The largest proper class $\Edist$ and the smallest proper class~$\Esplit$
\begin{align*}
& \Edist\,=\{\textrm{all distinguished triangles}\} \qquad \text{and}
\\
& \Esplit=\{\textrm{all split triangles }X\to Y \to Z \oto{0}\Sigma X\}
\end{align*}
have phantoms all morphisms and the zero morphisms, respectively.
\smallbreak
\item
\label{it:proper-homological}%
Let $H\colon \cat{K}^{(\opname)}\to \cat{A}$ be a (co)homological functor to an abelian category and write $H^i=H\circ\Sigma^i$ as usual for $i\in \bbZ$. The class~$\scE_H$ consisting of the triangles~\eqref{eq:E-triangle} such that $H^i(h)=0$ for all~$i\in \bbZ$, \ie such that for all~$i\in\bbZ$ the sequence \mbox{$0\to H^i(X)\otoo{H^i f} H^i(Y) \otoo{H^i g}H^i (Z)\to 0$} is exact in~$\cat{A}$ (or $\cat{A}\op$ for \emph{co}homological) is a proper class of triangles. The ideal of $\scE_H$-phantoms is precisely $\ker(\prod_{i}H^i)$. The trivial classes $\Esplit$ and $\Edist$ are special cases, for $H\colon \cat{K}\hook \cat{A}(\cat{K})$ the embedding into its Freyd envelope~$\cat{A}(\cat{K})$, and for $H=0$ respectively. More generally, any Serre subcategory~$\cat{B}\subseteq\cat{A}(\cat{K})$ yields a homological functor $H\colon\cat{K}\hook \cat{A}(\cat{K})\onto \cat{A}(\cat{K})/\cat{B}$ and all proper classes appear as~$\scE_H$ in this way, for a suitable $\cat{B}$ (the subcategory of $\cat{A}(\cat{K})$ made of images of $\scE$-phantoms). See~\cite[Section~3]{Beligiannis00}.
\smallbreak
\item
\label{it:F-proper}%
Let $F\colon \cat{K}\to \cat{K}'$ be an exact functor of triangulated categories and~$\scE'$ be a proper class in~$\cat{K}'$. Then $F\inv(\scE')$ is a proper class in~$\cat{K}$. For instance, if $\scE'=\scE_{H'}$ for a homological functor $H'\colon \cat{K}'\to \cat{A}$, then $F\inv(\scE')=\scE_H$ where $H=H'\circ F$. The special case of~$\scE'=\Edist(\cat{K}')$, consisting of all distinguished triangles, is boring since $F\inv(\Edist(\cat{K}'))=\Edist(\cat{K})$. The case of split triangles~$\scE'=\Esplit(\cat{K}')$ will be a much better source of examples. In particular, its ideal of phantoms is the ideal $\ker(F)=\SET{h}{F(h)=0}$.
\end{enumerate}
\end{Exas}

\begin{Defs}
\label{def:proj}%
Following the intuition of \Cref{rem:E-ntuition} we adapt standard notions:
\begin{enumerate}[(1)]
\item
\label{it:proj-inj}%
We say an object $P\in \cat{K}$ is \emph{$\scE$-projective} if for any triangle~\eqref{eq:E-triangle} in~$\scE$ the associated sequence of abelian groups
\begin{displaymath}
\xymatrix@C=1.8em{
0\ar[r] & \cat{K}(P,X) \ar[rr]^-{\cat{K}(P,f)} && \cat{K}(P,Y)\ar[rr]^-{\cat{K}(P,g)} && \cat{K}(P,Z)\ar[r] & 0
}
\end{displaymath}
is exact. Dually we say $I$ is \emph{$\scE$-injective} if for any triangle~\eqref{eq:E-triangle} in~$\scE$ the sequence
\begin{displaymath}
\xymatrix@C=1.8em{
0 \ar[r] & \cat{K}(Z,I) \ar[rr]^-{\cat{K}(g,I)} && \cat{K}(Y,I)\ar[rr]^-{\cat{K}(f,I)} && \cat{K}(X,I)\ar[r] & 0
}
\end{displaymath}
is exact. (In both cases, exactness in the middle is clear and exactness on the left for all triangles in~$\scE$ is equivalent to exactness on the right.) We denote by
\[
\proj(\scE) \qquadtext{and} \inj(\scE)
\]
the classes of $\scE$-projective and $\scE$-injective objects respectively. We note that they are closed under sums, summands, suspensions, and isomorphisms.
\smallbreak
\item
We say that~$\cat{K}$ \emph{has enough $\scE$-projectives} (or sometimes that $\scE$ has enough projectives) if every object~$X\in\cat{K}$ admits a \emph{$\proj(\scE)$-precover} (\emph{$\scE$-projective precover}), \ie an $\scE$-projective~$P$ and an $\scE$-epimorphism $P\to X$. Dually, we say that $\cat{K}$ \emph{has enough $\scE$-injectives} if every $X$ admits an \emph{$\inj(\scE)$-preenvelope} (\emph{$\scE$-injective preenvelope})~$X\to I$, \ie an $\scE$-monomorphism to an $\scE$-injective~$I$.
\end{enumerate}
\end{Defs}

\begin{Exa}
\label{ex:trivial-proj-inj}%
In the trivial cases of \Cref{exa:proper-E}\,\eqref{it:proper-E-trivial}, every object is~$\Esplit$-projective and $\Esplit$-injective; only zero is $\Edist$-projective or $\Edist$-injective.
\end{Exa}

Here are some standard facts, mostly with standard proofs.
\begin{Prop}\label{prop:basic-proper}%
Let $\scE$ be a proper class of triangles in~$\cat{K}$.
\begin{enumerate}[\rm(a)]
\item
\label{it:splittings}%
An $\scE$-epimorphism with $\scE$-projective target is a split epimorphism. An $\scE$-monomorphism with $\scE$-injective source is a split monomorphism.
\smallbreak
\item
For every $\scE$-phantom~$h$, we have $\cat{K}(P,h)=0$ for every $\scE$-projective~$P$ and $\cat{K}(h,I)=0$ for every $\scE$-injective~$I$.
\smallbreak
\item
An object $X$ such that $\cat{K}(X,h)=0$ (respectively $\cat{K}(h,X)=0$) for every $\scE$-phantom~$h$ must be $\scE$-projective (respectively $\scE$-injective).
\end{enumerate}
Suppose now that $\cat{K}$ has enough $\scE$-projectives (respectively enough $\scE$-injectives). Then the following converses to the above hold true:
\begin{enumerate}[\rm(a)]
\setcounter{enumi}{3}
\item
An object $X$ such that every $\scE$-epimorphism $Y\to X$ (respectively every $\scE$-monomorphism $X\to Y$) splits must be $\scE$-projective (respectively $\scE$-injective).
\smallbreak
\item
A morphism $h$ such that $\cat{K}(P,h)=0$ for every $\scE$-projective~$P$ (respectively $\cat{K}(h,I)=0$ for every $\scE$-injective~$I$) must be an~$\scE$-phantom.
\end{enumerate}
\end{Prop}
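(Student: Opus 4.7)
Parts~(a)--(c) are direct consequences of the definitions plus the long exact sequence of a distinguished triangle, and make no use of the enough-projectives/injectives hypothesis; that hypothesis enters only in~(d) and~(e). I would discuss only the projective halves, since the injective halves are formally dual.

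For~(a), complete the $\scE$-epimorphism $g\colon Y\to P$ to an $\scE$-triangle $X\to Y\oto{g}P\oto{h}\Sigma X$. By the defining property of $\scE$-projectivity the map $\cat{K}(P,g)$ is surjective, so $\id_P$ lifts to a section of~$g$. For~(b), take any triangle $X\to Y\oto{g}Z\oto{h}\Sigma X$ in~$\scE$; again $\cat{K}(P,g)$ is surjective, and $hg=0$ combined with this surjectivity forces $\cat{K}(P,h)=0$. Part~(c) runs the same argument in reverse: fix an $\scE$-triangle $X\to Y\oto{g}Z\oto{h}\Sigma X$; the hypothesis $\cat{K}(W,h)=0$ inserted into the long exact sequence for $\cat{K}(W,-)$ shows that $\cat{K}(W,g)$ is surjective, which is the right-exactness condition in Definition~\ref{def:proj}\,\eqref{it:proj-inj}, equivalent by the parenthetical remark there to full exactness. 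Hence $W\in\proj(\scE)$.

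Assuming enough $\scE$-projectives, part~(d) becomes a one-liner: choose an $\scE$-projective precover $P\to X$; by hypothesis this $\scE$-epimorphism splits, exhibiting $X$ as a retract of the $\scE$-projective~$P$, and hence itself $\scE$-projective by retract-closure of $\proj(\scE)$.

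The only genuinely substantive step is~(e). Given $h\colon Z\to\Sigma A$ with $\cat{K}(P,h)=0$ for every $\scE$-projective~$P$, I would pick an $\scE$-projective precover $p\colon P\to Z$, sitting in an $\scE$-triangle $K\to P\oto{p}Z\oto{\partial}\Sigma K$. The hypothesis gives $hp=0$, so the long exact sequence of the underlying distinguished triangle produces a factorisation $h=h'\partial$ for some $h'\colon\Sigma K\to\Sigma A$. To finish, invoke the fact (Remark~\ref{rem:saturated}) that the class $\Ph_\scE$ of $\scE$-phantoms forms a two-sided ideal of morphisms and determines the proper class~$\scE$: since $\partial\in\Ph_\scE$ by construction, the post-composite $h=h'\partial$ lies in $\Ph_\scE$, \ie is itself an $\scE$-phantom. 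The main obstacle, if one may call it that, is simply remembering this ideal-theoretic characterisation of $\scE$-phantoms; once it is in hand the argument is purely formal.
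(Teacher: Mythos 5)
Your proof is correct, and for parts (a)--(d) it is precisely the routine long-exact-sequence reasoning that the paper leaves as an exercise. The only point where you genuinely diverge from the paper is the last step of (e): both arguments begin by choosing an $\scE$-projective precover $p\colon P\to Z$ and observing $hp=0$, but the paper then concludes in one line from the saturation clause of \Cref{hyp:E} (via \Cref{rem:saturated}: $hp=0\in\Ph_{\scE}$ with $p$ an $\scE$-epimorphism forces $h\in\Ph_{\scE}$), whereas you run the long exact sequence of the precover triangle $K\to P\oto{p}Z\oto{\partial}\Sigma K$ to factor $h=h'\circ\partial$ and then use that $\Ph_{\scE}$ is a two-sided ideal containing~$\partial$. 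Both routes are valid. Note that the ideal property you invoke is a consequence of the homotopy pushout/pullback closure axioms alone, so your version actually establishes (e) without appealing to saturation, while the paper's argument is marginally shorter; also, the ``determines $\scE$'' half of \Cref{rem:saturated} that you cite is not needed---only closure of $\Ph_{\scE}$ under post-composition.
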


\begin{proof}
Parts~(a), (b), (c) and~(d) are easy exercises from the definition and the long exact sequences associated to distinguished triangles under $\cat{K}(X,-)$ and~$\cat{K}(-,X)$.
Let us prove~(e) when $\cat{K}$ has enough projectives, say. Choose an $\scE$-projective precover $g\colon P\to Z$ of the source of the given morphism $h\colon Z\to W$. Then in~$\cat{K}(P,W)$, we have $hg=\cat{K}(P,h)(g)=0$. Since $\scE$-phantoms are saturated (\Cref{rem:saturated}) we deduce from~$hg=0\in\Ph_{\scE}$ with $g$ an $\scE$-epimorphism that $h$ is an $\scE$-phantom.
\end{proof}

\begin{Def}
\label{def:relative-exact}%
Let $F\colon \cat{K}\to \cat{K}'$ be an exact functor of triangulated categories. Suppose that $\scE$ and $\scE'$ are proper classes of triangles in~$\cat{K}$ and~$\cat{K}'$ respectively. We say that $F$ is \emph{relative-exact (with respect to~$\scE$ and~$\scE'$)} if~$F(\scE)\subseteq\scE'$.
\end{Def}

We encounter an old friend:
\begin{Prop}
\label{prop:adj-exact}%
Let $F\colon \cat{K}\to \cat{K}'$ be relative-exact with respect to~$\scE$ in~$\cat{K}$ and $\scE'$ in~$\cat{K}'$. A right adjoint~$U\colon \cat{K}'\to \cat{K}$ of~$F$, if it exists, must preserve injectives\,: $U(\inj(\scE'))\subseteq\inj(\scE)$. Dually, a left adjoint~$V\colon \cat{K}'\to \cat{K}$ of~$F$, if it exists, must preserve projectives\,: $V(\proj(\scE'))\subseteq\proj(\scE)$.
\end{Prop}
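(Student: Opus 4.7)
The approach is a direct adjunction argument, essentially the standard proof that right adjoints to exact functors preserve injectives, adapted to the relative setting. Let me sketch the right-adjoint case; the left-adjoint statement is formally dual.

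Fix $I' \in \inj(\scE')$ and a triangle $X \xrightarrow{f} Y \xrightarrow{g} Z \xrightarrow{h} \Sigma X$ in $\scE$. To check that $U(I')$ is $\scE$-injective, by \Cref{def:proj}\,\eqref{it:proj-inj} we must show exactness of
\[
0 \to \cat{K}(Z, U(I')) \xrightarrow{\cat{K}(g, U(I'))} \cat{K}(Y, U(I')) \xrightarrow{\cat{K}(f, U(I'))} \cat{K}(X, U(I')) \to 0.
\]
Using the adjunction $F \dashv U$ and naturality of the unit/counit, this sequence is isomorphic to
\[
0 \to \cat{K}'(F(Z), I') \xrightarrow{\cat{K}'(Ff, I')} \cat{K}'(F(Y), I') \xrightarrow{\cat{K}'(Ff, I')} \cat{K}'(F(X), I') \to 0.
\]
(Here I mean the sequence obtained by applying $\cat{K}'(-, I')$ to the image of the triangle under $F$.) Since $F$ is relative-exact, the triangle $F(X) \to F(Y) \to F(Z) \to \Sigma F(X)$ belongs to $\scE'$, and since $I'$ is $\scE'$-injective the displayed sequence is exact. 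Translating back across the adjunction gives the desired exactness, so $U(I') \in \inj(\scE)$.

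For the dual statement, let $P' \in \proj(\scE')$ and pick a triangle in $\scE$; the adjunction $V \dashv F$ identifies $\cat{K}(V(P'), -)$ on the triangle with $\cat{K}'(P', F(-))$ on its $F$-image, which lies in $\scE'$ by relative-exactness, so $\scE'$-projectivity of $P'$ yields the required short exact sequence.

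There is really no substantive obstacle here: the argument is entirely formal, and both halves rest on the two inputs $F(\scE) \subseteq \scE'$ and the natural bijections of Hom-sets supplied by the adjunction. The only thing to verify carefully is that the isomorphism of sequences induced by the adjunction is compatible with the maps $\cat{K}(f, U(I'))$ and $\cat{K}'(Ff, I')$ (respectively its dual), which is immediate from naturality of the adjunction unit.
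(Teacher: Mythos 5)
Your argument is correct and is essentially the paper's own proof: both rest on the natural isomorphism $\cat{K}(-,U(I'))\cong\cat{K}'(F(-),I')$ together with relative-exactness of $F$ and $\scE'$-injectivity of $I'$ (and the dual for the left adjoint). The only blemish is a typo where $\cat{K}'(Ff,I')$ appears twice in your displayed sequence (the first map should be $\cat{K}'(Fg,I')$), but your parenthetical makes the intended meaning clear.
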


\begin{proof}
For $I\in\inj(\scE')$, the object $U(I)$ satisfies \Cref{def:proj}\,\eqref{it:proj-inj} since $\cat{K}(-,U(I))$ $\cong \cat{K}'(F(-),I)=\cat{K}'(-,I)\circ F$ and $F$ is relative-exact and $I$ is injective.
\end{proof}

\threestars

We conclude this section by discussing a method to produce proper classes of triangles by returning to the `$F$-split' triangles~$\scE=F\inv(\Esplit)$ of \Cref{exa:proper-E}\,\eqref{it:F-proper}. We begin by lightening the terminology in the natural way:

\begin{Def}
\label{def:F-triangle}%
Let $F\colon\cat{K}\to\cat{L}$ be an exact functor of triangulated categories. We say that a triangle
\begin{equation}
\label{eq:F-triangle}%
X\otoo{f} Y \otoo{g} Z \otoo{h} \Sigma X
\end{equation}
is \emph{$F$-split} if $F(h)=0$, in which case we say that $f$ is an \emph{$F$-monomorphism} (\ie $F(f)$ is a split monomorphism), that $g$ is an \emph{$F$-epimorphism} (\ie $F(g)$ is a split epimorphism) and that $h$ is an \emph{$F$-phantom}. By \Cref{exa:proper-E}\,\eqref{it:F-proper}, the class $\scE_F=F\inv(\Esplit)$ of $F$-split triangles is a proper class of triangles. The corresponding ideal $\Ph_{\scE_F}$ of $F$-phantoms is simply the kernel~$\ker(F)$ of $F$ on morphisms. The $\scE_F$-projective objects and $\scE_F$-injective objects will simply be called \emph{$F$-projective} and \emph{$F$-injective} respectively.
\end{Def}

The definitions of $F$-split triangles, $F$-monomorphisms, $F$-epimorphisms and $F$-phantoms are clear and transparently controlled by~$F$. On the other hand, $F$-injectives and $F$-projectives are more mysterious, and it is not so obvious how to guarantee the existence of enough of them. This is our starting point.

\begin{Lem}\label{lem:adjunction}
Let $F\colon \cat{K} \to \cat L$ be an exact functor.
\begin{enumerate}[\rm(a)]
\item
Suppose that $F$ admits a right adjoint $U\colon\cat{L}\to\cat{K}$. Then $U(L)$ is $F$-injective for every $L\in \cat L$ and the unit of adjunction $\eta\colon X \to UF(X)$ is an $F$-monomorphism for every $X\in \cat{K}$. In particular, there are enough $F$-injectives and
\begin{displaymath}
F\text{-}\inj = \add(U(L)\;\vert\; L\in \cat L).
\end{displaymath}
\item
Suppose that $F$ admits a left adjoint $V\colon\cat{L}\to\cat{K}$. Then $V(L)$ is $F$-projective for every $L\in \cat L$ and the counit of adjunction $\varepsilon\colon VF(X) \to X$ is an $F$-epimorphism for every $X\in \cat{K}$. In particular, there are enough $F$-projectives and
\begin{displaymath}
F\text{-}\proj = \add(V(L)\;\vert\; L\in \cat L).
\end{displaymath}
\end{enumerate}
\end{Lem}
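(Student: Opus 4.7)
The plan is to treat the two parts by duality, so I sketch only part (a). The key observation is that the proper class of $F$-split triangles is by construction~$\scE_F = F\inv(\Esplit)$, so the functor $F$ is tautologically relative-exact from $(\cat{K},\scE_F)$ to $(\cat{L},\Esplit)$. Since every object of $\cat{L}$ is $\Esplit$-injective (\Cref{ex:trivial-proj-inj}), \Cref{prop:adj-exact} immediately gives that $U(L)$ is $\scE_F$-injective, i.e., $F$-injective, for every $L\in\cat{L}$. Alternatively one can verify this directly: given an $F$-split triangle $X\to Y\to Z\to\Sigma X$, its image under~$F$ splits in~$\cat{L}$, so applying $\cat{L}(-,L)$ yields a short exact sequence, which under the adjunction $\cat{K}(-,U(L))\cong\cat{L}(F(-),L)$ is exactly the exactness needed in \Cref{def:proj}\,\eqref{it:proj-inj}.

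Next I would show that for every $X\in\cat{K}$ the unit $\eta_X\colon X \to UF(X)$ is an $F$-monomorphism. By definition this means $F(\eta_X)$ is a split monomorphism in~$\cat{L}$, which is immediate from the triangle identity $\varepsilon_{F(X)}\circ F(\eta_X) = \id_{F(X)}$, where $\varepsilon$ is the counit of the adjunction. Combined with the first point, this shows that $\eta_X$ is an $F$-monomorphism into an $F$-injective, so $\cat{K}$ has enough $F$-injectives and every $X$ admits the tautological $F$-injective preenvelope $X\to UF(X)$.

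It remains to identify $F\text{-}\inj = \add(U(L)\,\vert\, L\in\cat{L})$. The inclusion $\supseteq$ follows from the preservation of $F$-injectivity by $U$ (already established) together with the closure of $F\text{-}\inj$ under sums, summands and isomorphisms noted in \Cref{def:proj}\,\eqref{it:proj-inj}. For the reverse inclusion $\subseteq$, let $I$ be any $F$-injective. The unit $\eta_I\colon I\to UF(I)$ is an $F$-monomorphism into an object of the form $U(L)$ with $L=F(I)$; since $I$ is $F$-injective, \Cref{prop:basic-proper}\,\eqref{it:splittings} forces $\eta_I$ to split, exhibiting $I$ as a summand of~$UF(I)$, hence as an object of $\add(U(L)\,\vert\, L\in\cat{L})$.

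No step is really an obstacle here: the whole argument is formal manipulation of the adjunction together with the fact that $F$-splitness is defined through~$F$. The only subtlety worth highlighting is that one needs the closure properties of $F\text{-}\inj$ (sums and summands) to make the description $\add(U(L)\,\vert\, L\in\cat{L})$ come out cleanly; these are precisely the closure properties recorded at the end of \Cref{def:proj}\,\eqref{it:proj-inj}.
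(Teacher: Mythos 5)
Your proof is correct and follows essentially the same route as the paper's (rather terse) argument: relative-exactness of $F$ with respect to $\scE_F$ and $\Esplit$, \Cref{prop:adj-exact} together with \Cref{ex:trivial-proj-inj} for the injectivity of $U(L)$, the triangle identity for the unit being an $F$-monomorphism, and the splitting from \Cref{prop:basic-proper}\,\eqref{it:splittings} to get the description as $\add(U(L)\,\vert\, L\in\cat{L})$. You merely spell out the last identification in more detail than the paper does.
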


\begin{proof}
This is standard. By construction of~$\scE_F=F\inv(\Esplit)$, the exact functor $F$ is relative-exact (\Cref{def:relative-exact}) with respect to~$\scE_F$ in~$\cat{K}$ and $\Esplit$ in~$\cat{L}$. By \Cref{prop:adj-exact}, $U$ and $V$ preserve injectives and projectives, respectively. But every object of~$\cat{L}$ is~$\Esplit$-projective-injective (\Cref{ex:trivial-proj-inj}). The claims about the (co)units of adjunction are immediate from the unit-counit relations.
\end{proof}

\begin{Rem}
One can develop relative homological algebra via resolutions, derived functors, etc, as in~\cite[Section~4]{Beligiannis00} or~Christensen~\cite{Christensen98}. See also~\cite{MeyerNest10,Meyer08b} for an illustration of these ideas in $KK$-theory. We bifurcate instead towards `singularity categories', where projective and injective objects vanish.
\end{Rem}

\goodbreak
\section{Beligiannis' construction}
\label{se:Beligiannis}%
\medbreak

In usual homological algebra, the most convenient setting for constructing a stable category is a Frobenius exact category (see~\Cref{rec:exact}). The transposition of these ideas to relative homological algebra is the subject of this section.

Let $\cat{K}$ be an essentially small triangulated category and~$\scE$ be a proper class of triangles as in \Cref{se:proper-E}. We further assume the following strong symmetry in~$\scE$:

\begin{Def}
\label{def:Frobenius}%
We say that a proper class of triangles $\scE$ as in \Cref{hyp:E} is \emph{Frobenius} if it has enough projectives and enough injectives and if they coincide
\begin{displaymath}
\proj(\scE) = \inj(\scE).
\end{displaymath}
We refer to these as the $\scE$-projective-injective objects, when we want to stress Frobeniusity. Otherwise, we simply speak of $\scE$-projectives or $\scE$-injectives, as needed.
\end{Def}

\begin{Def}
\label{def:rel-stable}%
For $\scE$ Frobenius, we consider the \emph{$\scE$-relative stable category}, \ie the corresponding additive quotient of~$\cat{K}$ by the $\scE$-projective-injective objects
\[
\STAB{\cat{K}}{\scE}=\frac{\cat{K}}{\proj(\scE)}=\frac{\cat{K}}{\inj(\scE)}\,.
\]
This quotient comes equipped with the universal additive functor
\[
\pi\colon \cat{K}\to \SKE
\]
mapping $\proj(\scE)=\inj(\scE)$ to zero. Recall that the category $\SKE$ can be realized by keeping the same objects as~$\cat{K}$ (so that $\pi$ is the identity on objects) and by defining the hom groups as the quotients of abelian groups
\[
\STAB{\cat{K}}{\scE}(X,Y)=\frac{\cat{K}(X,Y)}{\NE(X,Y)}
\]
where the ideal of morphisms~$\NE$ is defined for every pair of objects~$X,Y$ as
\[
\NE(X,Y)=\SET{f\colon X\to Y}{f\textrm{ factors via an $\scE$-projective-injective object}}.
\]
We shall call a morphism \emph{$\scE$-contractible} if it belongs to~$\NE$.
\end{Def}

\begin{War}
It is important not to confuse the two ideals we discuss here: The ideal $\NE$ of $\scE$-contractible morphisms (those factoring via $\scE$-projectives-injectives) and the ideal~$\Ph_{\scE}$ of $\scE$-phantoms (those appearing as the third map in an $\scE$-triangle).
\end{War}

\begin{Cons}
\label{cons:stable-triang}%
The idea is to put a triangulated structure on $\SKE$ by working in~$\cat{K}$ but relative to $\scE$, in the same way that one triangulates the stable category of a Frobenius exact category; suspension and cones in $\SKE$ will be given by the `same' homotopy pushouts/pullbacks as usual but where we think of $\proj(\scE)$-precovers and $\inj(\scE)$-preenvelopes as being the correct maps from or to a contractible object. Note for peace of mind that $\pi(X)\cong0$ if and only if $X$ is projective-injective (\Cref{lem:iso}).

Given an object $X\in \SKE$ we define $\Sigma_{\scE} X$ to be the image under $\pi$ of the cofibre in~$\cat{K}$ of a chosen $\inj(\scE)$-preenvelope $X \to X_\scE$. One can show that the result is independent of the choice of the preenvelope, up to a canonical isomorphism in~$\SKE$. Similarly $\Sigma_{\scE}^{-1}$ is defined to be $\pi$ applied to the fibre of any chosen $\proj(\scE)$-precover $X^\scE \to X$. Both of these constructions are well-defined and functorial in $\SKE$ and they are quasi-inverse to one another. This, as the notation suggests, defines the suspension on~$\SKE$. Further details can be found in Construction~\ref{cons:sigma-comparison}.

Given a morphism $f\in \SKE(X,Y)$ we now construct its cofibre in~$\SKE$. Let $\tilde{f} \in \cat{K}(X,Y)$ be a lift of $f$ to $\cat{K}$, choose an $\inj(\scE)$-preenvelope $X\to X_\scE$ of $X$ and consider the homotopy pushout in~$\cat{K}$, as in the leftmost square of
\begin{equation}
\label{eq:stable-triangle}%
\vcenter{
\xymatrix{
X \ar[r] \ar[d]_-{\tilde{f}} & X_\scE \ar[d] \ar[r] & \Sigma_{\scE}X \ar@{=}[d] \\
Y \ar[r]_-{\tilde{g}} & Z \ar[r]_-{\tilde{h}} & \Sigma_{\scE}X
}}
\end{equation}
We define the cofibre of $f$ to be $\pi(Y \stackrel{\tilde{g}}{\to} Z)$. To obtain the distinguished triangle in $\SKE$ these maps fit into, we complete the homotopy bicartesian square by taking cones (in~$\cat{K}$) to get the rest of~\eqref{eq:stable-triangle} above and we declare the diagram
\[
X\otoo{f} Y \otoo{\pi\tilde g} Z \otoo{\pi\tilde h} \Sigma_{\scE}X
\]
to be a `basic' distinguished triangle in~$\SKE$. More generally, any triangle isomorphic in~$\SKE$ to a basic one is distinguished. We can define the fibre of $f$ by taking the analogous homotopy pullback along a $\proj(\scE)$-precover of~$Y$. And we can construct distinguished triangles in an apparently dual fashion, that actually coincides with the first one.
\end{Cons}

\begin{Exa}
\label{ex:st-triangle-E}%
Let us discuss the cofibre in~$\SKE$ of the class of an $\scE$-monomorphism. Since every morphism in~$\SKE$ is isomorphic to the class of an $\scE$-monomorphism (by replacing $f\colon X\to Y$ with $\smat{i\\f}\colon X\to X_{\scE}\oplus Y$ where $i\colon X\to X_{\scE}$ is an $\inj(\scE)$-preenvelope), this example is actually generic. Consider a triangle in~$\scE$
\begin{equation}
\label{eq:E-triangle-again}%
X\otoo{f} Y \otoo{g} Z \otoo{h} \Sigma X
\end{equation}
and the homotopy pushout~$W$ of~$X\oto{f} Y$ along an $\inj(\scE)$-preenvelope $X\to X_{\scE}$. Since the morphism $X_{\scE}\to W$ is an $\scE$-monomorphism (homotopy pushout of the $\scE$-monomorphism~$f$) and since $X_{\scE}$ is chosen to be $\scE$-injective, the morphism \mbox{$X_{\scE}\to W$} splits by \Cref{prop:basic-proper}\,\eqref{it:splittings}. We therefore obtain a commutative diagram with exact rows and columns in~$\cat{K}$:
\[
\xymatrix@C=5em{
X \ar[r] \ar[d]_-{f}
& X_\scE \ar[d] \ar[r] \ar[d]^-{\smat{1\\0}}
& \Sigma_{\scE}X \ar@{=}[d]
\\
Y \ar[r]_-{\tilde{g}=\smat{\tilde g_1\\g}} \ar[d]_-{g}
& X_{\scE}\oplus Z \ar[r]_-{\tilde{h}=\smat{\tilde h_1 & \tilde h_2}} \ar[d]_-{\smat{0&1}}
& \Sigma_{\scE}X
\\
Z \ar@{=}[r]
& Z
}
\]
As the morphism $\smat{0&1}\colon X_{\scE}\oplus Z\to Z$ becomes an isomorphism in~$\SKE$, with explicit inverse~$\smat{0\\1}\colon Z\to X_{\scE}\oplus Z$, we see that the distinguished triangle of \Cref{cons:stable-triang} over the image~$\pi(f)\colon X\to Y$ in~$\SKE$ is isomorphic to the triangle
\begin{equation}
\label{eq:image-triangle}%
\xymatrix{
X \ar[r]^-{\pi f}
& Y \ar[r]^-{\pi g}
& Z \ar[r]^-{\pi\tilde h_2}
& \Sigma_{\scE} X
}
\end{equation}
which is \emph{almost} the image of the initial triangle~\eqref{eq:E-triangle-again}. Only the third morphism differs, for the simple reason that its target is not the same as that of~$\pi h$, since the suspensions differ in~$\cat{K}$ and~$\SKE$. We shall nevertheless call~\eqref{eq:image-triangle} the \emph{image-triangle} of the $\scE$-triangle~\eqref{eq:E-triangle-again}. Again, we shall give further details in \Cref{cons:sigma-comparison}.
\end{Exa}

The first stepping stone for our study is the following very general result:

\begin{Thm}[Beligiannis]
\label{thm:Beligiannis}%
Let $\scE$ be a Frobenius class of triangles in~$\cat{K}$, as in \Cref{def:Frobenius}. \Cref{cons:stable-triang} makes the $\scE$-relative stable category~$\SKE$ into a triangulated category. Moreover, for every triangle in~$\scE$ of the form~\eqref{eq:E-triangle-again}, the image-triangle~\eqref{eq:image-triangle} is a distinguished triangle in $\SKE$ and every distinguished triangle in $\SKE$ is isomorphic, in $\SKE$, to a triangle of this form.
\end{Thm}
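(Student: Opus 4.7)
The strategy is to mimic Happel's proof (\Cref{rec:Frobenius}) that the stable category of a Frobenius exact category is triangulated, with the proper class~$\scE$ playing the role of admissible short exact sequences, $\scE$-monomorphisms and $\scE$-epimorphisms replacing admissible mono- and epimorphisms, and the Frobenius hypothesis \Cref{def:Frobenius} replacing the usual one. First I would verify that the suspension $\Sigma_{\scE}$ of \Cref{cons:stable-triang} is a well-defined auto-equivalence of $\SKE$: given two $\scE$-injective preenvelopes of the same~$X$, one lifts the identity on~$X$ across each preenvelope in both directions (using $\scE$-injectivity and the long exact sequences attached to $\scE$-triangles), and the induced maps on cofibres are mutually inverse in~$\SKE$; functoriality in $\SKE$ reduces to \Cref{prop:basic-proper}(b), which says that $\scE$-phantoms annihilate $\scE$-injectives, so the induced maps on cones are unique modulo~$\NE$. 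The quasi-inverse $\Sigma_{\scE}^{-1}$ is built dually via $\scE$-projective precovers.

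The crucial reduction is that every morphism in $\SKE$ is isomorphic to the $\pi$-image of an $\scE$-monomorphism in~$\cat{K}$. Given a lift $\tilde f\colon X\to Y$ and an $\scE$-injective preenvelope $\eta\colon X\to X_{\scE}$, the map $\smat{\tilde f\\ \eta}\colon X\to Y\oplus X_{\scE}$ factors as $(\id_Y\oplus \eta)\circ \smat{\tilde f\\ \id_X}$, where $\smat{\tilde f\\ \id_X}$ is a split monomorphism (with retract $\smat{0 & \id_X}$) and $\id_Y\oplus\eta$ is a direct sum of $\scE$-monomorphisms; both are therefore $\scE$-monomorphisms, and so is their composite, by the closure properties of~$\scE$ built into \Cref{hyp:E}. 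Under the natural isomorphism $Y\oplus X_{\scE}\simeq Y$ in $\SKE$ (since $X_{\scE}$ is $\scE$-projective-injective, hence zero in~$\SKE$), the $\pi$-image of $\smat{\tilde f\\ \eta}$ equals~$f$. Combined with \Cref{ex:st-triangle-E}, this shows that the basic triangle of \Cref{cons:stable-triang} agrees, up to canonical isomorphism in~$\SKE$, with the image-triangle of an $\scE$-triangle extending this $\scE$-monic lift. This handles axiom (\textbf{TR1}) and the second assertion of the theorem simultaneously.

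With image-triangles as preferred representatives, axioms (\textbf{TR2}) and (\textbf{TR3}) follow by adapting Happel's classical arguments. The subtlety for (\textbf{TR2}) is that $\scE$ is only closed under triple rotation (\Cref{hyp:E}, cf.~\Cref{rem:E-ntuition}), so a single rotation of the underlying $\scE$-triangle need not lie in~$\scE$; instead, one constructs, using an $\scE$-injective preenvelope of~$Y$, a new $\scE$-triangle extending~$\pi g$ whose image-triangle is canonically isomorphic in~$\SKE$ to the rotation of the original image-triangle, after identifying $\Sigma X$ with $\Sigma_{\scE}X$. For (\textbf{TR3}), given a morphism of distinguished triangles represented by a commuting square between $\scE$-triangles, the middle morphism is produced by a standard diagram-chase lift, and the induced map on cofibres is well-defined in~$\SKE$ by \Cref{prop:basic-proper}(b).

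The hardest part will be the octahedral axiom (\textbf{TR4}). Given two composable $\scE$-monomorphisms $X\to Y\to Y'$, each extended to an $\scE$-triangle, the standard octahedron in~$\cat{K}$ produces a candidate third triangle relating their cofibres, but a priori this candidate is only distinguished in~$\cat{K}$, whereas we need its cofibre sequence to belong to~$\scE$ in order to read off a distinguished triangle in~$\SKE$ via the image-triangle recipe. This is precisely what the saturation axiom of~\Cref{hyp:E} is designed to handle: arranging the octahedron into the $3\times 3$ diagram displayed there, with the second row and third column in~$\scE$ by construction, saturation forces the third row into~$\scE$. The delicate bookkeeping is to identify the combinatorics of the octahedron with the $3\times 3$ saturation configuration and then to translate back into the triangulated structure of~$\SKE$; this is where the Frobenius condition on~$\scE$ is fully exploited to ensure preenvelopes and precovers exist uniformly at all relevant corners.
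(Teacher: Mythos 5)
There is nothing to compare against line by line: the paper does not reprove this theorem, it quotes it from \cite[Theorem~7.2]{Beligiannis00} (with further reference to \cite[Theorem~2.12]{BeligiannisMarmaridis94}) and only remarks that an extraneous assumption there can be dropped. Your Happel-style stabilization is indeed the strategy of the cited source, and your architecture is right: well-definedness and functoriality of $\Sigma_{\scE}$ via \Cref{prop:basic-proper}(b), the reduction of every morphism of $\SKE$ to the image of an $\scE$-monomorphism via $\smat{\tilde f\\ \eta}\colon X\to Y\oplus X_{\scE}$ (this reduction is exactly what the paper asserts at the start of \Cref{ex:st-triangle-E}), image-triangles as the distinguished ones, and a relative verification of the axioms. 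One bookkeeping point: closure of $\scE$-monomorphisms under composition, which your reduction uses, is not listed verbatim in \Cref{hyp:E}, but it is part of Beligiannis' definition of a proper class, to which that hypothesis explicitly defers, so this is fine.

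However, your proposed mechanism for the octahedral axiom does not work as stated. Take composable $\scE$-monomorphisms $u\colon A\to B$ and $v\colon B\to C$ with cones $B/A$, $C/A$, $C/B$; the triangle you must place in $\scE$ is $B/A\to C/A\to C/B\to \Sigma(B/A)$. If you try to exhibit it as the third row of the $3\times 3$ saturation diagram of \Cref{hyp:E}, the columns can only be the $\scE$-triangles on $u$ and on $vu$, and then the second and third rows come out as \emph{single} rotations of the triangle on $v$ and of the triangle you want; since $\scE$ is closed only under triple rotation (the very issue you flagged for TR2), the hypotheses of the saturation axiom are not met and its conclusion is not the triangle you need. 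Saturation is a converse-type detection axiom (it is what powers \Cref{prop:basic-proper}(e)), not the tool here. What actually does the job is the other half of \Cref{hyp:E}: the square with horizontal maps $B\to C$ and $B/A\to C/A$ is homotopy bicartesian, so $B/A\to C/A$ is a homotopy pushout of the $\scE$-monomorphism $v$ and hence an $\scE$-monomorphism; since $\scE$ is closed under isomorphism of triangles, the octahedron's third triangle then lies in $\scE$, and the rest of your sketch goes through along the lines of \cite{Beligiannis00,BeligiannisMarmaridis94}.
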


\begin{Rem}
The above result is given in \cite[Theorem~7.2]{Beligiannis00}, with further reference to~\cite[Theorem~2.12]{BeligiannisMarmaridis94}. Note that the latter makes a drastic assumption at some point, which works fine in models but not in full generality, in that kernels of some maps should exist. This assumption is not formally needed, as the careful reader will verify.
\end{Rem}

\begin{Exa}
The trivial classes of triangles $\Esplit$ and $\Edist$ of \Cref{exa:proper-E}\,\eqref{it:proper-E-trivial} give us the trivial quotients $\STAB{\cat{K}}{\Esplit}=0$ and $\STAB{\cat{K}}{\Edist}=\cat{K}$ respectively.
\end{Exa}

\begin{War}
The quotient $\pi\colon\cat{K}\to \SKE$ is \emph{not} necessarily exact, and in fact it will usually not be, as it does not even respect the suspension. We can say more:
\end{War}

\begin{Lem}\label{lem:notexact1}
For~$\scE$ Frobenius, the additive quotient functor $\pi\colon \cat{K}\to \SKE$ is exact if and only if it is a localization. In that case, and if~$\cat{K}$ is idempotent-complete, then $\pi$ is the projection of~$\cat{K}$ onto a direct summand, \ie $\cat{K}=\cat{J}\oplus\cat{J}'$ where $\cat{J}=\proj(\scE)$ and $\cat{J}'\cong\SKE$ are orthogonal subcategories of~$\cat{K}$, both ways: 
$\cat{K}(\cat{J}, \cat{J}') = 0 = \cat{K}(\cat{J}',\cat{J})$.
\end{Lem}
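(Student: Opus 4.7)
The plan splits naturally along the two claims.

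\textbf{Part 1: the equivalence.} The implication ``localization $\Rightarrow$ exact'' is formal, since Verdier quotients of triangulated categories are exact by construction. For the converse, assume $\pi$ is exact and set $\cat{J}=\proj(\scE)=\inj(\scE)$. First I would identify $\Ker\pi$ with $\cat{J}$: an object $X$ has $\pi(X)\cong 0$ iff $\id_X$ factors through an $\scE$-projective-injective, hence (using that $\cat{J}$ is closed under summands) iff $X\in\cat{J}$. Since $\pi$ is now exact, $\cat{J}=\Ker\pi$ is a thick subcategory of~$\cat{K}$. The idea is then to play off the two universal factorizations against each other. On the one hand, the canonical factorization of \Cref{not:Verd} gives $\cat{K}\xrightarrow{\pi}\SKE\xrightarrow{\beta}\cat{K}\Verd\cat{J}$. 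On the other hand, because $\pi$ is exact and kills $\cat{J}$, the universal property of Verdier localization gives a factorization $\cat{K}\to\cat{K}\Verd\cat{J}\xrightarrow{\alpha}\SKE$. Both $\alpha\beta$ and $\beta\alpha$ are the identity after precomposing with the respective (essentially surjective, dense on morphisms) quotient functors, so by uniqueness in the universal properties $\alpha$ and $\beta$ are mutually inverse equivalences; in particular $\pi$ is the Verdier localization at $\cat{J}$.

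\textbf{Part 2: the direct summand decomposition (idempotent-complete case).} Assume further that $\cat{K}$ is idempotent complete. Set $\cat{J}'=\{X\in\cat{K}\mid\cat{K}(X,J)=0=\cat{K}(J,X)\text{ for all }J\in\cat{J}\}$. That $\cat{J}\cap\cat{J}'=0$ is immediate: any $X$ in the intersection has $\id_X\in\cat{K}(X,X)\subseteq\cat{K}(\cat{J},\cat{J}')=0$. The orthogonality conditions hold by construction, so all that remains is essential surjectivity of $\cat{J}\oplus\cat{J}'\hookrightarrow\cat{K}$. This is the crux and the main obstacle. My strategy is to use the Frobenius structure together with exactness to produce, for every~$X$, a splitting $X\simeq J\oplus X'$ with $J\in\cat{J}$ and $X'\in\cat{J}'$.

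Concretely, start from an injective preenvelope $\eta\colon X\to X_{\scE}$. By \Cref{prop:adj-exact} and the injective property, every morphism $X\to I$ with $I\in\cat{J}$ factors through~$\eta$; dually the projective precover $\epsilon\colon X^{\scE}\to X$ receives every morphism from $\cat{J}$. Exactness of $\pi$ applied to the $\scE$-triangle $X\to X_\scE\to \Sigma_\scE X\to \Sigma X$ forces the connecting map $\delta_X\colon\Sigma_\scE X\to\Sigma X$ to become an isomorphism in~$\SKE$, i.e., to have cone $\Sigma X_\scE\in\cat{J}$ in~$\cat{K}$. The plan is then to combine the preenvelope and precover constructions to extract, in the idempotent-complete setting, a \emph{splitting} of~$\eta$: the preenvelope $\eta\colon X\to X_\scE$ has a retraction $r\colon X_\scE\to X$, so that $X\simeq X_\scE\oplus X'$ where $X':=\cone(r)[-1]$. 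One then checks that the summand $X'$ lies in $\cat{J}'$ by using the universal factoring properties of the preenvelope and precover: a nonzero map $J\to X'$ or $X'\to J$ with $J\in\cat{J}$ would factor through the precover or preenvelope of $X'$ and thus split off a summand of~$X_\scE$, contradicting minimality (which can be arranged by splitting idempotents).

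The main obstacle is precisely this last step: producing the retraction $r$ of $\eta$ from the combination of ``$\pi$ exact'' and ``enough $\scE$-projectives-injectives that coincide''. The retraction should come from applying exactness and then lifting the inverse of $\pi(\delta_X)$ back to~$\cat{K}$ via idempotent completion, using that isomorphisms in $\SKE$ correspond (by \Cref{lem:iso}) to maps that become isomorphisms after adding summands in $\cat{J}$. Once the splitting is obtained, the identification $\cat{J}'\cong\SKE$ is immediate: the restriction of $\pi$ to $\cat{J}'$ is fully faithful (the kernel ideal $\mcN_\scE$ vanishes on $\cat{J}'$, by orthogonality) and essentially surjective (every object of $\SKE$ is represented by the $\cat{J}'$-summand of any lift to~$\cat{K}$).
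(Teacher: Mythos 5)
Your Part 1 is correct and is essentially the paper's argument: both directions are handled by playing the universal properties of the additive quotient and the Verdier quotient against each other, after noting that $\Ker\pi=\proj(\scE)$ is thick once $\pi$ is exact.

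Part 2, however, has a genuine gap at exactly the point you flag as the crux. The claim that the injective preenvelope $\eta\colon X\to X_\scE$ admits a retraction is false in general: a retraction $r$ with $r\eta=\id_X$ would exhibit $X$ as a direct summand of the $\scE$-injective $X_\scE$, hence force \emph{every} object of $\cat{K}$ to be $\scE$-injective and $\SKE=0$. The alternative reading of your formula $X\simeq X_\scE\oplus X'$ is equally impossible: if $X\in{}^\perp\cat{J}$ is nonzero with $X_\scE\neq 0$, then $X_\scE$ cannot be a summand of~$X$ since $\cat{K}(X,\cat{J})=0$. What is true, and what the paper proves, is weaker and is what the idempotent-completeness is really for: complete $\eta=i$ to a triangle $Z\otoo{f}X\otoo{i}X_\scE\to\Sigma Z$; exactness of $\pi$ makes $\pi(f)$ invertible, and fullness of $\pi$ lifts its inverse to some $g\colon X\to Z$ with $\pi(fg)=\pi(\id_X)$. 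Then $\id_X-fg$ factors through an $\scE$-injective, hence (preenvelope property) through~$i$, giving $\id_X=fg+hi$; composing with $i$ and using $if=0$ yields $i=ihi$, so $hi$ and $ih$ are idempotents. Splitting them gives $X\cong I_0\oplus Z_0$ with $I_0$ a \emph{summand} of $X_\scE$ (not $X_\scE$ itself), and $i$ vanishes on $Z_0$, whence $Z_0\in{}^\perp\cat{J}$ because every map from $Z_0$ to an $\scE$-injective factors through $i|_{Z_0}=0$. So the object you split is an idempotent manufactured from the lifted inverse, not the preenvelope itself.

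The remainder of your outline also needs the dual step, which your two-sided orthogonal $\cat{J}'$ quietly presupposes: the above only places $Z_0$ in ${}^\perp\cat{J}$, and one must run the dual argument with a $\proj(\scE)$-precover to get decompositions into $\cat{J}\oplus\cat{J}^\perp$ as well, and then deduce ${}^\perp\cat{J}=\cat{J}^\perp$ from the two decompositions before concluding, as you do, that $\pi$ restricts to an equivalence on this orthogonal complement. Your appeal to ``minimality (which can be arranged by splitting idempotents)'' of the preenvelope is not needed once the argument is set up this way, and is itself not justified as stated.
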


\begin{proof}
Localizations are exact so the sufficiency is clear. Conversely, suppose that $\pi$ is exact. Then $\cat{J}:=\Ker \pi = \proj(\scE)$ is thick, and we get a commutative diagram
\begin{displaymath}
\xymatrix@C=2em@R=1em{
& \cat{K} \ar[rd]^-\pi \ar[ld]
\\
\cat{K}\Verd\proj(\scE) \ar@{-->}[rr]<-0.5ex> && \SKE \ar@{-->}[ll]<-0.5ex>
}
\end{displaymath}
where the dashed arrows exist by the respective universal properties and are both exact (one by the universal property and one by inspection). It follows from the uniqueness attached to the universal properties that these functors give an exact equivalence $\cat{K}\Verd\proj(\scE)\cong \SKE$, identifying $\SKE$ with the Verdier quotient.

Now, suppose that this holds and furthermore that~$\cat{K}$ is idempotent-complete. Then for every~$X\in\cat{K}$, the distinguished triangle~$Z\oto{f} X\oto{i}X_{\scE}\to$ completing a preenvelope $X\to X_{\scE}$ maps to~$\pi(Z)\isoto \pi(X) \to 0\to $ in~$\SKE$ and it follows by fullness of~$\pi$ that there exists a morphism $g\colon X\to Z$ in~$\cat{K}$ such that $\pi(f g)=\pi(\id_X)$. Hence there exists $h\colon X_{\scE} \to X$ such that $\id_X=f g+h i$. Applying~$i$ to this relation and using~$i f=0$, we see that $i=i h i$. It follows that $h i\colon X\to X$ and $i h\colon X_{\scE}\to X_{\scE}$ are idempotents. Since $\cat{K}$ is idempotent-complete, the triangle is split, namely, $X\cong I_0\oplus Z_0$, $X_{\scE}=I_0\oplus I_1$ and $Z=\Sigma\inv I_1\oplus Z_0$ so that the triangle becomes
\[
\xymatrix{
Z=\Sigma\inv I_1 \oplus Z_0 \ar[r]^-{f=\smat{0&0\\0&1}}
& X=I_0 \oplus Z_0 \ar[r]^-{i=\smat{1&0\\0&0}}
& X_{\scE}=I_0 \oplus I_1 \ar[r]^-{\smat{0&1\\0&0}}
& \Sigma Z=I_1\oplus \Sigma Z_0\,.
}
\]
Since $i\colon X\to X_{\scE}$ is a pre-envelope, and yet is zero on~$Z_0$, it is easy to show that $Z_0\in {}^\perp\cat{J}$. So we have proved that every object of~$\cat{K}$ is the direct sum of an object of~$\cat{J}$ and of~${}^\perp\cat{J}$. Dually, one can show that $\cat{K}=\cat{J}\oplus \cat{J}^\perp$. From this, it follows easily that ${}^\perp\cat{J}=\cat{J}^\perp$ and that this subcategory is equivalent to~$\SKE$ under~$\pi$.
\end{proof}

The assumption that~$\cat{K}$ is idempotent-complete is necessary in \Cref{lem:notexact1}.

\begin{Exa}
\label{exa:non-split}%
Suppose that $\cat{L}$ is a triangulated category with non-zero Grothendieck group $K_0(\cat{L})\neq 0$. Let $\cat{K}\subset\cat{L}\oplus\cat{L}$ be the triangulated subcategory on the following objects $\cat{K}=\SET{(X,Y)\in\cat{L}\times\cat{L}}{[X]=[Y]\textrm{ in }K_0(\cat{L})}$. Let~$\pi\colon \cat{K}\to \cat{L}$ be the projection onto the first summand. It is clearly exact and full. Let $\cat{J}=\Ker(\pi)=\SET{(0,Y)}{[Y]=0}$. It is easy to check that $\pi\colon \cat{K}\to\cat{L}$ realizes the additive quotient of~$\cat{K}$ by~$\cat{J}$, using that for every $Y\in \cat{L}$ the object $(0,Y\oplus \Sigma Y)$ belongs to~$\cat{J}$. The same trick proves that the conclusion of \Cref{lem:notexact1} does not hold. Indeed, it shows that $\cat{J}^\perp={}^\perp\cat{J}=\SET{(X,0)}{[X]=0}$. Therefore $\cat{J}\oplus \cat{J}^\perp=\SET{(X,Y)}{[X]=[Y]=0}$ is strictly smaller than~$\cat{K}$ since we assume $K_0(\cat{L})\neq0$.
\end{Exa}

Let us say a word on the naturality of the construction $(\cat{K},\scE)\mapsto \SKE$.

\begin{Prop}\label{prop:naturality}
Let $F\colon \cat{K}\to \cat{K}'$ be an exact functor and suppose that $\scE$ and $\scE'$ are Frobenius classes of triangles in $\cat{K}$ and $\cat{K}'$, with corresponding additive quotients~$\SKE$ and~$\SKEp$ respectively. Provided we have
\begin{enumerate}[\rm(1)]
\item
\label{it:naturality-1}%
$F$ is relative-exact (\Cref{def:relative-exact}), that is, $F(\scE) \subseteq \scE'$
\item
\label{it:naturality-2}%
$F$ preserves projective-injectives, that is, $F(\proj \scE) \subseteq \proj \scE'$
\end{enumerate}
then there is a unique exact functor $\underline{F}\colon \SKE\to \SKEp$ such that the square
\begin{displaymath}
\xymatrix@R=1.5em{
\cat{K} \ar[r]^-F \ar[d]_-\pi & \cat{K}' \ar[d]^-{\pi'} \\
\SKE \ar[r]_-{\underline{F}} & \SKEp
}
\end{displaymath}
commutes. This construction is natural in~$F$ in the obvious sense (without restriction on the allowed natural transformations).
\end{Prop}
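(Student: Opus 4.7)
The plan is to construct $\underline{F}$ from the universal property of the additive quotient $\pi\colon \cat{K}\to \SKE$, then check exactness directly against Construction~\ref{cons:stable-triang}. First I would verify that $\pi'\circ F$ kills the ideal $\NE$: any morphism factoring as $f_2\circ f_1$ through some $P\in\proj(\scE)$ maps under $F$ to a morphism factoring through $F(P)\in\proj(\scE')$ by hypothesis~\eqref{it:naturality-2}, and is thus annihilated by~$\pi'$. The universal property of the additive quotient then yields a unique additive functor $\underline{F}\colon \SKE\to\SKEp$ with $\underline{F}\circ\pi=\pi'\circ F$.

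Next I would promote $\underline{F}$ to an exact functor. For the suspension: given $X\in\cat{K}$, choose an $\inj(\scE)$-preenvelope $X\to X_\scE$ with cofibre triangle $X\to X_\scE\to Z\to\Sigma X$, which lies in~$\scE$. Applying $F$ and invoking hypotheses~\eqref{it:naturality-1} and~\eqref{it:naturality-2}, we obtain an $\scE'$-triangle $F(X)\to F(X_\scE)\to F(Z)\to \Sigma F(X)$ whose first map is an $\inj(\scE')$-preenvelope of $F(X)$ in~$\cat{K}'$. Hence $\pi'(F(Z))$ computes $\Sigma_{\scE'}F(X)$, and the independence of $\Sigma_{\scE'}$ from the choice of preenvelope supplies a canonical isomorphism $\underline{F}(\Sigma_\scE X)\cong \Sigma_{\scE'}\underline{F}(X)$, natural in~$X$ by functoriality of the construction. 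For triangles: by Theorem~\ref{thm:Beligiannis} it suffices to send image-triangles of $\scE$-triangles of the form~\eqref{eq:E-triangle-again} to distinguished triangles of $\SKEp$. Applying $F$ to such an $\scE$-triangle produces an $\scE'$-triangle by~\eqref{it:naturality-1}, and a diagram chase through Example~\ref{ex:st-triangle-E} shows that $\underline{F}$ carries its image-triangle to the image-triangle of the $F$-image, once we identify suspensions via the isomorphism just constructed; the latter is distinguished in $\SKEp$ by Theorem~\ref{thm:Beligiannis} applied to~$\scE'$.

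Uniqueness of $\underline{F}$ as an exact functor making the square commute is immediate from the universal property, since $\pi$ is the identity on objects and full on morphisms. For naturality in~$F$: a natural transformation $\alpha\colon F\Rightarrow G$ between two functors both satisfying~\eqref{it:naturality-1} and~\eqref{it:naturality-2} descends via the components $\pi'(\alpha_X)$ to a natural transformation $\underline{\alpha}\colon\underline{F}\Rightarrow\underline{G}$; preservation of composition and identities is automatic, and no additional hypothesis on~$\alpha$ is needed because the construction uses $F$ and $G$ only through their post-composition with~$\pi'$.

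The main obstacle is the suspension compatibility, namely proving that the canonical isomorphism $\underline{F}\Sigma_\scE\cong\Sigma_{\scE'}\underline{F}$ is natural, not merely levelwise. This forces one to unpack the functoriality of $\Sigma_\scE$ in Construction~\ref{cons:stable-triang}, where the suspension of a morphism is built by choosing preenvelopes at source and target and extending across the resulting squares; one must check that $\underline{F}$ sends this extension to the corresponding one in $\SKEp$. Once this is in place, the rest reduces to Beligiannis's characterization of distinguished triangles in Theorem~\ref{thm:Beligiannis}.
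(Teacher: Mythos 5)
Your proposal is correct and follows essentially the same route as the paper, which simply notes that (2) yields the induced functor via the universal property of the additive quotient and that exactness follows from Construction~\ref{cons:stable-triang} (or Example~\ref{ex:st-triangle-E}) using (1). You merely spell out the details the paper leaves as an exercise, in particular the key observation that $F$ sends $\inj(\scE)$-preenvelopes to $\inj(\scE')$-preenvelopes, which makes the suspension and triangle comparisons work.
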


\begin{proof}
This is an easy familiarizing exercise. By (2) the functor $F$ induces a unique functor $\underline{F}$ making the desired square commute. Exactness is direct from \Cref{cons:stable-triang} (or \Cref{ex:st-triangle-E}) using~(1).
\end{proof}

\begin{Rem}
\label{rem:preserve-proj}%
By \Cref{prop:adj-exact}, if $F$ admits a left (or right) adjoint which is relative-exact, then $F$ automatically preserves projectives (injectives).
\end{Rem}

\begin{Rem}
If two functors $F\colon \cat{K}\to \cat{K}'$ and $G\colon \cat{K}'\to\cat{K}''$ satisfy the assumptions of \Cref{prop:naturality} then so does $G\circ F$ and we have $\underline{G\circ F}=\underline{G}\circ\underline{F}$.
\end{Rem}

An easy but useful consequence of this functoriality is that one can very cheaply obtain exact autoequivalences of the additive quotient.

\begin{Cor}\label{cor:auto}
Let $\scE$ be a Frobenius class of triangles in~$\cat{K}$. If $F$ is an exact autoequivalence of $\cat{K}$ such that $F(\scE) = \scE$ (or equivalently $F(\proj \scE) = \proj \scE$) then $F$ induces a unique exact autoequivalence $\underline{F}$ of~$\SKE$ such that $\underline{F}\,\pi=\pi\,F$.
\end{Cor}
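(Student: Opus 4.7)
The plan is to invoke Proposition~\ref{prop:naturality} in the special case $\cat{K}'=\cat{K}$, $\scE'=\scE$, which reduces the work to checking its two hypotheses plus the parenthetical equivalence.

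First I would verify the parenthetical claim that $F(\scE)=\scE$ if and only if $F(\proj\scE)=\proj\scE$. For the forward direction, observe that since $F$ is an autoequivalence, its quasi-inverse $F^{-1}$ is simultaneously a left and right adjoint. If $F(\scE)=\scE$, then $F$ is relative-exact with respect to $\scE$ on both sides, and the same holds for $F^{-1}$ (apply $F^{-1}$ to $\scE=F(\scE)$). Proposition~\ref{prop:adj-exact} then forces $F^{\pm 1}$ to preserve $\inj(\scE)$ and $\proj(\scE)$, giving $F(\proj\scE)=\proj\scE$ (using $\proj\scE=\inj\scE$ from Frobeniusity). For the reverse direction, recall from Proposition~\ref{prop:basic-proper}(e) that the ideal of $\scE$-phantoms is recovered as $\Ph_\scE=\{h\mid\cat{K}(P,h)=0\text{ for all }P\in\proj\scE\}$, and that $\scE$ itself is determined by~$\Ph_\scE$ (Remark~\ref{rem:saturated}). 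Hence if $F$ (and $F^{-1}$) preserve $\proj\scE$ setwise, then they preserve $\Ph_\scE$ and therefore $\scE$.

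With both hypotheses of Proposition~\ref{prop:naturality} verified, I obtain a unique exact functor $\underline{F}\colon\SKE\to\SKE$ with $\underline{F}\,\pi=\pi\, F$. Applying the same argument to the autoequivalence $F^{-1}$ (which also satisfies $F^{-1}(\scE)=\scE$) yields an exact functor $\underline{F^{-1}}$.

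Finally, I would invoke the functoriality remark immediately following Proposition~\ref{prop:naturality}: composition of the induced functors is the induced functor of the composition. Thus $\underline{F}\circ\underline{F^{-1}}=\underline{F\circ F^{-1}}=\underline{\id_{\cat{K}}}=\id_{\SKE}$, and similarly on the other side, so $\underline{F}$ is an exact autoequivalence. Uniqueness is already part of Proposition~\ref{prop:naturality}. The only step that requires genuine thought, rather than pure bookkeeping, is the equivalence $F(\scE)=\scE\iff F(\proj\scE)=\proj\scE$, and even there the work is packaged in Propositions \ref{prop:adj-exact} and \ref{prop:basic-proper}.
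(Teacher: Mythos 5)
Your argument is correct and follows the same route as the paper: the paper's proof is precisely "use that $F$ has an adjoint quasi-inverse $F^{-1}$ together with Remark~\ref{rem:preserve-proj} and the composition remark," which is what you do via Proposition~\ref{prop:naturality}. The only addition is your verification of the parenthetical equivalence $F(\scE)=\scE\iff F(\proj\scE)=\proj\scE$ (via Propositions~\ref{prop:adj-exact} and~\ref{prop:basic-proper} and Remark~\ref{rem:saturated}), which the paper asserts without proof and which you handle correctly, noting that the reverse direction uses the Frobenius hypothesis to have enough projectives.
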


\begin{proof}
Use that $F$ has an adjoint quasi-inverse~$F\inv$ and the above two remarks.
\end{proof}

\begin{Exa}\label{exa:auto}
Any proper class of triangles~$\scE$ is closed under suspension~$\Sigma$ and hence $\Sigma\colon\cat{K}\to \cat{K}$ induces a (skew-)\,exact autoequivalence $\underline{\Sigma}\colon \SKE\isoto \SKE$. However, this induced autoequivalence is, in general, \emph{not} the suspension~$\Sigma_{\scE}$ on~$\SKE$. Although $\Sigma_{\scE}$ and $\underline{\Sigma}$ need not agree there is a natural comparison map between the two.
\end{Exa}

\begin{Cons}\label{cons:sigma-comparison}
In order to define $\Sigma_{\scE}$ we have fixed for each $X\in \cat{K}$ a triangle
\begin{displaymath}
X\otoo{i_X} X_{\scE} \otoo{p_X} X' \otoo{\alpha_X} \Sigma X,
\end{displaymath}
with $i_X$ an $\inj(\scE)$-preenvelope, and declared $\Sigma_{\scE}(\pi(X)) = \pi(X')$. On the other hand, $\underline{\Sigma}(\pi(X))$ is simply~$\pi(\Sigma(X))$. We can thus define a transformation $\underline{\alpha}\colon \Sigma_{\scE} \to \underline{\Sigma}$ by taking its component at each object $\pi(X)$ to be
\begin{displaymath}
\xymatrix{
\Sigma_{\scE}\pi(X) = \pi(X') \ar[rr]^-{\pi(\alpha_X)} && \pi(\Sigma X) = \underline{\Sigma}\pi(X).
}
\end{displaymath}
Of course $\underline{\alpha}$ depends on the choices we have made in producing $\Sigma_{\scE}$, but making different choices gives a natural transformation which is object-wise conjugate, by canonical isomorphisms, to $\underline{\alpha}$.
\end{Cons}

\begin{Lem}\label{lem:sigma-comparison}
The transformation $\underline{\alpha}\colon \Sigma_{\scE} \to \underline{\Sigma}$ constructed above is natural and graded, in the sense that the natural map $t\colon \Sigma_{\scE} \circ \underline{\Sigma} \to \underline{\Sigma}\circ \Sigma_{\scE}$, that is part of the structure of $\underline{\Sigma}$ being an exact functor, makes the square
\begin{displaymath}
\xymatrix{
\Sigma_{\scE} \circ \underline{\Sigma} \ar[r]^-t \ar[d]_-{\underline{\alpha}_{\underline{\Sigma}}} & \underline{\Sigma}\circ \Sigma_{\scE} \ar[d]^-{\underline{\Sigma}(\underline{\alpha})} \\
\Sigma^2 \ar[r]_-{-1} & \Sigma^2
}
\end{displaymath}
commute.
\end{Lem}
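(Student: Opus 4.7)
My strategy is to lift $t_{\pi(X)}$ to an explicit map in $\cat{K}$ by comparing two preenvelope $\scE$-triangles with source $\Sigma X$, and to read off both the naturality and the sign from the universal property of preenvelopes combined with the rotation axiom. Throughout I keep the chosen $\scE$-triangle $T_X \colon X \xrightarrow{i_X} X_\scE \xrightarrow{p_X} X' \xrightarrow{\alpha_X} \Sigma X$ that is used in \Cref{cons:sigma-comparison} to define $\Sigma_\scE \pi(X) = \pi(X')$ and $\underline{\alpha}_{\pi(X)} = \pi(\alpha_X)$, and similarly the chosen $T_{\Sigma X}$.

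For the graded compatibility I would apply the triple rotation (\ie suspension in $\scE$, cf.~\Cref{hyp:E}) to $T_X$ and obtain the $\scE$-triangle
\[ \Sigma X \xrightarrow{-\Sigma i_X} \Sigma X_\scE \xrightarrow{-\Sigma p_X} \Sigma X' \xrightarrow{-\Sigma \alpha_X} \Sigma^2 X, \]
whose first map $-\Sigma i_X$ is an $\scE$-mono into the $\scE$-injective $\Sigma X_\scE$; this is therefore a second preenvelope-type $\scE$-triangle for $\Sigma X$. The preenvelope property of $i_{\Sigma X}$ then furnishes $\phi \colon (\Sigma X)_\scE \to \Sigma X_\scE$ with $\phi \circ i_{\Sigma X} = -\Sigma i_X$, and TR3 completes this (identities on $\Sigma X$ and $\Sigma^2 X$) to a morphism of distinguished triangles whose third component $\psi \colon (\Sigma X)' \to \Sigma X'$ satisfies $\Sigma \alpha_X \circ \psi = -\alpha_{\Sigma X}$ in $\cat{K}$. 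In $\SKE$ the class $\pi(\psi)$ is precisely the structure isomorphism $t_{\pi(X)} \colon \Sigma_\scE \underline{\Sigma} \pi(X) \to \underline{\Sigma} \Sigma_\scE \pi(X)$ coming from the exactness of $\underline{\Sigma}$ (\Cref{prop:naturality} applied to $\Sigma\colon \cat{K} \to \cat{K}$, whose exact-functor structure picks out the triple rotation). Pushing $\Sigma \alpha_X \circ \psi = -\alpha_{\Sigma X}$ down to $\SKE$ and using $\pi(\Sigma \alpha_X) = \underline{\Sigma}(\underline{\alpha}_{\pi(X)})$ and $\pi(\alpha_{\Sigma X}) = \underline{\alpha}_{\underline{\Sigma}\pi(X)}$ then delivers $\underline{\Sigma}(\underline{\alpha}_{\pi(X)}) \circ t_{\pi(X)} = -\underline{\alpha}_{\underline{\Sigma}\pi(X)}$, which is exactly the commutativity of the square at $\pi(X)$.

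Naturality of $\underline{\alpha}$ in $X$ runs along the same lines, and without any sign. For a morphism $f \colon X \to Y$ in $\cat{K}$ I would factor $i_Y \circ f$ through the preenvelope $i_X$ as $g \circ i_X$ for some $g \colon X_\scE \to Y_\scE$, and let TR3 extend this to a morphism between the chosen triangles with third component $h \colon X' \to Y'$ satisfying $\alpha_Y \circ h = \Sigma f \circ \alpha_X$. By \Cref{cons:sigma-comparison} one has $\pi(h) = \Sigma_\scE \pi(f)$, so the identity in $\SKE$ obtained by applying $\pi$ is precisely the naturality square of $\underline{\alpha}$ at $\pi(f)$.

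The only genuinely delicate point is the sign identification in the second paragraph: one must verify that the triple-rotated form of $T_X$—rather than the plain $\Sigma$-image—is what the exact-functor structure on $\underline{\Sigma}$ secretly uses, and hence that $\pi(\psi)$ really represents $+t_{\pi(X)}$ under the rotation-axiom convention, so that the minus sign is carried by the equation $\Sigma \alpha_X \circ \psi = -\alpha_{\Sigma X}$ alone. Once this bookkeeping is settled, the rest of the argument is a routine use of the preenvelope universal property and TR3.
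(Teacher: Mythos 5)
Your argument is essentially the paper's own proof: the naturality square is obtained by extending $i_Y\circ f$ along the preenvelope $i_X$ and applying $\pi$ to the resulting morphism of triangles, and the sign comes from comparing the chosen preenvelope triangle for $\Sigma X$ with the rotated triangle $\Sigma X\xrightarrow{-\Sigma i_X}\Sigma X_\scE\to\Sigma X'\xrightarrow{-\Sigma\alpha_X}\Sigma^2 X$, exactly as in the paper. The ``delicate point'' you defer is moot at the paper's level of rigour: the paper takes $\pi(\widetilde t_X)=\pi(\psi)$ as the \emph{definition} of the structure map $t$ witnessing exactness of $\underline{\Sigma}$ (this is how the comparison $\underline{\Sigma}\Sigma_\scE\cong\Sigma_\scE\underline{\Sigma}$ arises from \Cref{cons:stable-triang} in the first place), so no further identification is required.
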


\begin{proof}
Let $f\colon X\to Y$ be a map in $\cat{K}$ representing a class $\pi(f)\colon \pi(X) \to \pi(Y)$ in~$\SKE$. In $\cat{K}$ we have a commutative diagram
\begin{displaymath}
\xymatrix{
X \ar[r] \ar[d]_-f & X_{\scE} \ar[r] \ar@{-->}[d]^-{\exists} & X' \ar[r] \ar@{..>}[d]^-{\exists f'} & \Sigma X \ar[d]^-{\Sigma f} \\
Y \ar[r] & Y_{\scE} \ar[r] & Y' \ar[r] & \Sigma Y
}
\end{displaymath}
with $\pi(f') = \Sigma_{\scE}\pi(f)$ by definition. Applying $\pi$ to the righthand square we deduce a commutative diagram exhibiting the naturality of $\underline{\alpha}$:
\begin{displaymath}
\xymatrix{
\Sigma_{\scE} \pi(X) \ar@{=}[r] \ar[d]^-{\Sigma_{\scE}\pi(f)} & \pi(X') \ar[r]^-{\underline{\alpha}_{\pi(X)}} \ar[d]^-{\pi(f')} & \pi \Sigma X \ar[d]^-{\pi(\Sigma f)} \ar@{=}[r] & \underline{\Sigma} \pi X \ar[d]^-{\underline{\Sigma}\pi(f)} \\
\Sigma_{\scE} \pi(Y) \ar@{=}[r] & \pi(Y') \ar[r]^-{\underline{\alpha}_{\pi(Y)}} & \pi \Sigma Y \ar@{=}[r] & \underline{\Sigma} \pi Y
}
\end{displaymath}
For the statement that $\underline{\alpha}$ is graded we construct $t\colon \Sigma_{\scE} \circ \underline{\Sigma} \to \underline{\Sigma}\circ \Sigma_{\scE}$. Consider the diagram in $\cK$
\begin{displaymath}
\xymatrix{
\Sigma X \ar[r]^-{i_{\Sigma X}} \ar@{=}[d] & (\Sigma X)_{\scE} \ar[r]^-{p_{\Sigma X}} \ar@{-->}[d]^-{\exists} & (\Sigma X)' \ar[r]^-{\alpha_{\Sigma X}} \ar@{..>}[d]^-{\exists \widetilde{t}_X} & \Sigma^2 X \ar@{=}[d] \\
\Sigma X \ar[r]_-{-\Sigma i_X} & \Sigma(X_{\scE}) \ar[r]_-{-\Sigma p_X} & \Sigma(X') \ar[r]_-{-\Sigma \alpha_X} & \Sigma^2 X
}
\end{displaymath}
where we have produced the dashed and then dotted arrow via the fact that $-\Sigma i_X$ is an $\inj\scE$ preenvelope and then the usual axiom. The component of the natural transformation $t\colon \Sigma_{\scE} \circ \underline{\Sigma} \to \underline{\Sigma}\circ \Sigma_{\scE}$ at $\pi(X)$ is given by $\pi(\widetilde{t}_X)$. Applying $\pi$ to the rightmost square gives a commutative diagram in $\SKE$
\begin{displaymath}
\xymatrix{
\Sigma_{\scE} \underline{\Sigma} \pi(X) \ar[rr]^-{\underline{\alpha}_{\underline{\Sigma} \pi(X)}} \ar[d]_-{t_{\pi(X)}}  && \underline{\Sigma}^2 \pi(X) \ar@{=}[d] \\
\underline{\Sigma} \Sigma_{\scE} \pi(X) \ar[rr]_-{-\underline{\Sigma} (\underline{\alpha}_{\pi(X)})} && \underline{\Sigma}^2 \pi(X)
}
\end{displaymath}
as required.
\end{proof}

Here is a naturality statement for adjoint pairs, based on Proposition~\ref{prop:naturality}, with a somewhat slicker formulation.
\begin{Lem}\label{lem:adjoints-descend}
Let $F\colon \cat{K}\to\cat{K}'$ be an exact functor with right adjoint~$G\colon \cat{K}'\to \cat{K}$. Suppose that $\scE$ and $\scE'$ are Frobenius classes of triangles on~$\cat{K}$ and~$\cat{K}'$ respectively, and that $F$ and $G$ are relative-exact with respect to those. Then necessarily
\begin{displaymath}
F(\proj(\scE)) \subseteq \proj(\scE')
\qquadtext{and}
G(\proj(\scE')) \subseteq \proj(\scE)
\end{displaymath}
and $F$ and $G$ descend to an adjoint pair of (exact) functors $\underline{F}\colon \SKE \adjto \SKEp : \underline{G}$.
\end{Lem}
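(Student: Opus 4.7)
The plan is to break the statement into three bite-sized tasks: preservation of projectives, descent to additive quotients, and descent of the adjunction isomorphism.

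First I would verify the two containments. Since $\scE$ and $\scE'$ are Frobenius we have $\proj(\scE)=\inj(\scE)$ and $\proj(\scE')=\inj(\scE')$, so preservation of projectives is equivalent to preservation of injectives. Apply \Cref{prop:adj-exact} to the relative-exact functor~$F$: its right adjoint $G$ preserves injectives, hence $G(\proj(\scE'))=G(\inj(\scE'))\subseteq \inj(\scE)=\proj(\scE)$. Apply the dual half of \Cref{prop:adj-exact} to the relative-exact functor~$G$: its left adjoint $F$ preserves projectives, giving $F(\proj(\scE))\subseteq\proj(\scE')$.

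Next I would invoke \Cref{prop:naturality} to descend both functors. Hypothesis (1) of that proposition is exactly relative-exactness and hypothesis (2) is the containment just established; so $F$ and $G$ descend to unique exact functors $\underline{F}\colon\SKE\to\SKEp$ and $\underline{G}\colon\SKEp\to\SKE$ with $\underline{F}\,\pi=\pi'\,F$ and $\underline{G}\,\pi'=\pi\,G$.

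The substantive step is showing $\underline{F}\dashv\underline{G}$. For this I would argue that the bijection $\varphi\colon\cat{K}'(FX,Y)\isoto\cat{K}(X,GY)$ of the original adjunction sends the ideal $\NF(FX,Y)$ into $\NE(X,GY)$ and $\varphi\inv$ does the dual, so that passage to the quotient defines a natural isomorphism $\SKEp(\underline{F}X,Y)\cong\SKE(X,\underline{G}Y)$. Concretely, if $f\colon FX\to Y$ factors as $FX\to P'\to Y$ with $P'\in\proj(\scE')$, its adjunct $\varphi(f)=Gf\circ\eta_X$ factors through $GP'\in\proj(\scE)$ by the first bullet; conversely, if $g\colon X\to GY$ factors as $X\oto{h_1}P\oto{h_2}GY$ with $P\in\proj(\scE)$, then writing $\tilde h_2\colon FP\to Y$ for the adjunct of $h_2$, the adjunct $\varphi\inv(g)$ equals $\tilde h_2\circ F(h_1)$, which factors through $FP\in\proj(\scE')$ by the second bullet. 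Naturality of the descended bijection in $X$ and~$Y$ is inherited from the original adjunction since $\pi$ and~$\pi'$ are the identity on objects.

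The only non-routine point is the matching of the ideals~$\NE$ and~$\NF$ under~$\varphi$, and the preceding paragraph shows that this follows precisely from the preservation statements of the first step; the ``and equivalently'' clause that implicitly lies at the heart of \Cref{prop:adj-exact} does the real work. Everything else is formal manipulation of adjunctions and quotients.
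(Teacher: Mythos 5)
Your proof is correct. The first two steps coincide with the paper's: preservation of projective-injectives via \Cref{prop:adj-exact} (with the Frobenius identification $\proj=\inj$, which you spell out and the paper leaves implicit), and descent of $F$ and $G$ to exact functors via \Cref{prop:naturality}. For the adjunction itself you take a different route. The paper descends the unit $\eta\colon\Id\to GF$ and counit $\varepsilon\colon FG\to\Id$ using the naturality clause of \Cref{prop:naturality} (together with the remark that $\underline{G\circ F}=\underline{G}\circ\underline{F}$), and then notes that the unit--counit identities in the quotients are simply the images under $\pi$ and $\pi'$ of those in $\cat{K}$ and $\cat{K}'$. You instead work with the hom-set bijection $\varphi\colon\cat{K}'(FX,Y)\isoto\cat{K}(X,GY)$ and check that it carries maps factoring through $\proj(\scE')$ to maps factoring through $\proj(\scE)$ and conversely (your computations $\varphi(b\circ a)=G(b)\circ G(a)\circ\eta_X$ and $\varphi\inv(h_2\circ h_1)=\tilde h_2\circ F(h_1)$ are exactly right), so that $\varphi$ descends to a natural isomorphism of quotient hom-groups; naturality in the quotients follows since $\pi,\pi'$ are full and the identity on objects. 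Both arguments are sound: the paper's is marginally shorter because \Cref{prop:naturality} already records naturality in $F$, while yours has the merit of making explicit that the two preservation statements are precisely the compatibility of the adjunction isomorphism with the ideals $\NE$ and $\NE'$ of contractible maps, which is the real content of the descent.
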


\begin{proof}
The conservation of projective-injectives follows from \Cref{prop:adj-exact}. By \Cref{prop:naturality}, $F$ and $G$ induce exact functors on $\SKE$ and~$\SKEp$ and the unit and counits of the adjunction descend as well by naturality. The unit-counit relations are then simply the images of the unit-counit relations in~$\cat{K}$ and~$\cat{K}'$ under~$\pi$ and~$\pi'$.
\end{proof}

We finish the section by discussing when the proper classes~$\scE_F$ of $F$-split triangles (\Cref{def:F-triangle}) is Frobenius.

\begin{Thm}\label{thm:Wirthmuller}
Suppose that we are given an exact functor $F\colon \cat{K} \to \cat L$ with both a left and a right adjoint
\begin{displaymath}
\xymatrix{
\cat{K}
 \ar[d]|-{F\vphantom{I^I_J}}
\\
\cat{L}
 \ar@<-1em>[u]_-{U}
 \ar@<1em>[u]^-{V}
}
\end{displaymath}
satisfying the following generalized Wirthm\"uller condition: There exists an equivalence $W\colon \cat L \to \cat L$ such that $V\circ W \simeq U$. Then the $F$-injectives and $F$-projectives coincide in~$\cat{K}$ and are given by
\begin{displaymath}
\add(\Img(U)) = \add(\Img(V)).
\end{displaymath}
In other words, the class $\scE_F=F\inv(\Esplit)$ of $F$-split triangles is Frobenius. In particular, there is an induced triangulated structure on~$\SKF:=\STAB{\cat{K}}{\scE_F}$ via \Cref{thm:Beligiannis}.
\end{Thm}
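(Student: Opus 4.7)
The plan is to reduce the theorem to a direct application of Lemma~\ref{lem:adjunction} together with the Wirthm\"uller isomorphism. By the very \Cref{def:Frobenius} of a Frobenius proper class, I need three things: enough $\scE_F$-projectives, enough $\scE_F$-injectives, and the equality $\proj(\scE_F)=\inj(\scE_F)$. The last statement about the induced triangulated structure then follows tautologically from \Cref{thm:Beligiannis}.

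First I would unpack the two halves of \Cref{lem:adjunction}. Since $F$ admits the right adjoint $U$, part~(a) of that lemma yields that $\cat{K}$ has enough $F$-injectives and identifies
\[
F\text{-}\inj \;=\; \add\bigl(\Img(U)\bigr).
\]
Dually, since $F$ admits the left adjoint $V$, part~(b) yields enough $F$-projectives with
\[
F\text{-}\proj \;=\; \add\bigl(\Img(V)\bigr).
\]
So the only thing left to establish is the coincidence $\add(\Img U)=\add(\Img V)$.

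This is where the generalized Wirthm\"uller hypothesis enters. Fix an equivalence $W\colon\cat{L}\to\cat{L}$ together with an isomorphism $V\circ W\simeq U$, and let $W^{-1}$ be a quasi-inverse. For any $L\in\cat{L}$ we have $U(L)\simeq V(W(L))$, which belongs to $\Img(V)$; thus $\Img(U)\subseteq\Img(V)$ (as replete subcategories, per our Convention). Conversely, $V(L)\simeq V(W(W^{-1}(L)))\simeq U(W^{-1}(L))\in\Img(U)$, so $\Img(V)\subseteq\Img(U)$. Passing to additive closures yields $\add(\Img U)=\add(\Img V)$, hence $\proj(\scE_F)=\inj(\scE_F)$, so $\scE_F$ is Frobenius and \Cref{thm:Beligiannis} applies.

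There is no real obstacle beyond the bookkeeping: the only conceptual point is recognizing that the Wirthm\"uller condition $V\circ W\simeq U$ is formulated precisely to force the essential images of the two adjoints to coincide up to summands, which is exactly the symmetry needed to stabilize. Everything else is a direct citation of \Cref{lem:adjunction}, \Cref{def:Frobenius}, and \Cref{thm:Beligiannis}.
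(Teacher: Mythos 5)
Your proposal is correct and follows essentially the same route as the paper: both halves of \Cref{lem:adjunction} give enough $F$-projectives and $F$-injectives with the stated descriptions, and the Wirthm\"uller condition $V\circ W\simeq U$ with $W$ an equivalence forces $\Img(U)=\Img(V)$, hence $\add(\Img U)=\add(\Img V)$. Your two-sided check via a quasi-inverse of $W$ is just a slightly more explicit version of the paper's one-line observation that the essential images coincide.
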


\begin{proof}
Since $U \simeq V\circ W$ we have, for $X\in \cat{K}$, that $X\simeq U(L)$ for some $L\in \cat L$ if and only if $X\simeq VW(L)$ (for the same $L$) and so the essential images of $U$ and $V$ coincide. Hence $\add(\Img(U)) = \add(\Img(V))$, and we conclude by \Cref{lem:adjunction}.
\end{proof}

\begin{Rem}
\label{rem:Wirth}%
The `Wirthm\"ullery' condition in the theorem is reminiscent of the various generalizations of Frobenius functor and plays a similar role in producing situations well adapted to relative homological algebra (cf.\ for instance~\cite[Section~3]{DellAmbrogioStevensonStovicek17}). It also generalizes the notion of \emph{spherical functor} (we direct the uninitiated but curious reader to \cite[Section~2]{Addington16}) and so we immediately obtain many examples from algebraic geometry.

Another important case in which the theorem applies is when the two adjoints are equal: $V = U$. This situation is often referred to as a \emph{Frobenius adjunction} $V\adj F\adj V$, see~\cite{Morita65}. This occurs when one considers induction, restriction, and coinduction along the inclusion of a subgroup of a finite group, or when one considers tensoring with a rigid object.
\end{Rem}

\goodbreak
\section{Thick subcategories and proto-birationality}
\label{se:thick}%
\medbreak

We discuss the thick subcategories of the relative stable category~$\SKE$.

\begin{Hyp}
\label{hyp:se:thick}%
Let $\cat{K}$ be triangulated, $\scE$ a Frobenius class of triangles and
\[
\pi\colon\cat{K}\to\SKE
\]
the associated relative stable category as in \Cref{se:Beligiannis}.
\end{Hyp}

Let us begin with the question of stable isomorphisms.

\begin{Lem}\label{lem:iso}
Under \Cref{hyp:se:thick}, two objects $X,Y\in\cat{K}$ have isomorphic images $\pi(X)\simeq\pi(Y)$ in~$\SKE$ if and only if there are $\scE$-projective-injective objects $C_1$ and $C_2$ and an isomorphism $X\oplus C_1 \simeq Y \oplus C_2$ in~$\cat{K}$.
\end{Lem}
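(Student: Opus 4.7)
The forward ($\Leftarrow$) direction is immediate: applying the additive functor $\pi$ to an isomorphism $X\oplus C_1\simeq Y\oplus C_2$ and using $\pi(C_i)=0$ for $\scE$-projective-injectives gives $\pi(X)\simeq\pi(Y)$ in $\SKE$.

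For the converse, the plan is to lift the isomorphism $\pi(X)\simeq\pi(Y)$ in $\SKE$ to morphisms $f\colon X\to Y$ and $g\colon Y\to X$ in $\cat{K}$. The hypothesis $\pi(gf)=\id_{\pi(X)}$ forces $\id_X-gf$ to belong to $\NE$, so one can factor $\id_X-gf=sb$ through some $\scE$-projective-injective~$C$, with $b\colon X\to C$ and $s\colon C\to X$. The matrix identity $(g,s)\circ\binom{f}{b}=gf+sb=\id_X$ then exhibits the morphism $\binom{f}{b}\colon X\to Y\oplus C$ as a split monomorphism in $\cat{K}$.

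Complete $\binom{f}{b}$ to a distinguished triangle $X\to Y\oplus C\to P\to\Sigma X$ in $\cat{K}$. Since the first map splits, so does the triangle, yielding $Y\oplus C\simeq X\oplus P$ in $\cat{K}$; and because the triangle is isomorphic to a sum of trivial triangles, it belongs to the proper class $\scE$ by \Cref{hyp:E}. Applying \Cref{thm:Beligiannis}, the image-triangle of this $\scE$-triangle in $\SKE$ is distinguished, and after using $\pi(C)=0$ to identify $\pi(Y\oplus C)\cong\pi(Y)$ --- which in turn collapses $\pi\binom{f}{b}$ to $\pi(f)$ since $\pi(b)=0$ --- it takes the form
\[
\pi(X) \xrightarrow{\pi(f)} \pi(Y) \longrightarrow \pi(P) \longrightarrow \Sigma_{\scE}\pi(X).
\]

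The decisive step is now that $\pi(f)$ is an isomorphism by hypothesis, so in this distinguished triangle $\pi(P)\simeq 0$ in $\SKE$. Hence $\id_P$ factors through some $\scE$-projective-injective, exhibiting $P$ as a retract of one; since retracts of $\scE$-projective-injectives are themselves $\scE$-projective-injective (cf.~\Cref{def:proj}), we conclude $P\in\proj(\scE)=\inj(\scE)$. Combining with $X\oplus P\simeq Y\oplus C$ completes the proof, taking $C_1=P$ and $C_2=C$. The point demanding most care is the identification of the image-triangle's first map with $\pi(f)$, which relies on the explicit construction recalled in \Cref{ex:st-triangle-E}.
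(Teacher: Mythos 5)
Your proof is correct, but it takes a genuinely different (dual-flavoured) route from the paper's. The paper picks a $\proj(\scE)$-precover $p\colon C_1\to Y$, completes the resulting $\scE$-epimorphism $\smat{f & p}\colon X\oplus C_1\to Y$ to an $\scE$-triangle, uses the image-triangle (\Cref{ex:st-triangle-E}) to see that the fibre $W$ satisfies $\pi(W)=0$, hence is $\scE$-injective, and then invokes \Cref{prop:basic-proper}\,\eqref{it:splittings} to split the $\scE$-monomorphism $W\to X\oplus C_1$ and obtain $X\oplus C_1\simeq Y\oplus W$. You work on the monomorphism side instead: factoring $\id_X-gf$ through a projective-injective manufactures the split monomorphism $\smat{f\\ b}\colon X\to Y\oplus C$ by hand, so the triangle over it splits for free (and lies in $\scE$ by \Cref{hyp:E}, being isomorphic to a sum of trivial triangles), and the triangulated structure on $\SKE$ is needed only to conclude from the image-triangle that the cone $P$ has $\pi(P)\cong 0$, hence is a retract of a projective-injective and therefore projective-injective itself. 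Your version avoids both the appeal to enough $\scE$-projectives (the precover) and the splitting statement \Cref{prop:basic-proper}\,\eqref{it:splittings}, replacing them by the classical additive-quotient lifting trick familiar from stable module categories, at the cost of lifting the inverse isomorphism as well; the paper's version only lifts the isomorphism itself and lets the Frobenius structure supply the auxiliary projective-injective. Both arguments share the essential input, namely \Cref{thm:Beligiannis}/\Cref{ex:st-triangle-E} to know the image-triangle is distinguished, and both use (yours explicitly, the paper's implicitly) that $\pi(Z)\cong 0$ forces $Z$ to be a retract of a projective-injective and hence projective-injective.
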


\begin{proof}
Pick an isomorphism $\pi f\colon \pi X \to \pi Y$ and pick a $\proj(\scE)$-precover $p\colon C_1\to Y$. Complete the morphism $(f\ p)\colon X\oplus C_1\to Y$ into a distinguished triangle in~$\cat{K}$
\begin{displaymath}
\xymatrix{
W \ar[r]^-{w} & X\oplus C_1 \ar[r]^-{\left( \begin{smallmatrix} f & p \end{smallmatrix} \right) } & Y \ar[r] & \Sigma W.
}
\end{displaymath}
Since the second map is an $\scE$-epimorphism, this is an $\scE$-triangle. By \Cref{ex:st-triangle-E}, its image-triangle is distinguished in~$\SKE$ and since $\pi(f\ p)\cong\pi(f)$ is an isomorphism, we have $\pi(W)=0$. So $W$ is $\scE$-injective and the $\scE$-monomorphism $w$ splits by \Cref{prop:basic-proper}\,\eqref{it:splittings}, yielding the desired relation $Y\oplus W\simeq X\oplus C_1$ in~$\cat{K}$.
\end{proof}

\begin{Rem}
The above \Cref{lem:iso} justifies the name `stable category relative to~$\scE$': It is the place where objects become isomorphic if they are isomorphic in~$\cat{K}$ up to adding summands in the subcategory~$\proj(\scE)=\inj(\scE)$.
\end{Rem}

We now express the thick subcategories of~$\SKE$ in terms of subcategories of~$\cat{K}$.

\begin{Prop}
\label{prop:thick-rel-stable}%
Under \Cref{hyp:se:thick}, there is an order-preserving bijection $\cat{J}\mapsto \pi\inv(\cat{J})$, with inverse $\cat{C}\mapsto \pi(\cat{C})$, between the thick subcategories $\cat{J}\subseteq\SKE$ and the additive subcategories~$\cat{C}\subseteq\cat{K}$ containing~$\proj(\scE)=\inj(\scE)$, closed under direct summands and satisfying the following \emph{$\scE$-two-out-of-three property}\,: For every $\scE$-triangle $X\to Y\to Z\to \Sigma X$ in~$\cat{K}$, if two out of~$X,\ Y,\ Z$ belong to~$\cat{C}$ then so does the third.
\end{Prop}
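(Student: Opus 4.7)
The plan is to check that both assignments land in the claimed classes of subcategories and then that they are mutually inverse, with order-preservation being manifest.

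For the \emph{forward direction}, observe that $\cat{C}:=\pi\inv(\cat{J})$ contains $\Ker(\pi)=\proj(\scE)$, is additive, and is closed under summands because $\pi$ is additive and~$\cat{J}$ is thick. The $\scE$-two-out-of-three condition is the only substantive point, and it is precisely where Beligiannis's \Cref{thm:Beligiannis} together with \Cref{ex:st-triangle-E} do the work: every $\scE$-triangle $X\to Y\to Z\to\Sigma X$ in $\cat{K}$ has an image-triangle $\pi X\to\pi Y\to\pi Z\to\Sigma_{\scE}\pi X$ which is distinguished in~$\SKE$, and the ordinary two-out-of-three property in the thick subcategory~$\cat{J}$ then transfers back along~$\pi$.

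For the \emph{reverse direction}, the main obstacle is verifying closure under cones in~$\SKE$; here \Cref{lem:iso} is the crucial tool. Closure of $\pi(\cat{C})$ under summands is direct: if $\pi X$ splits off $\pi C$ for some $C\in\cat{C}$, so that $\pi C\simeq\pi(X\oplus Y)$ in~$\SKE$, then \Cref{lem:iso} furnishes $C_1,C_2\in\proj(\scE)$ with $C\oplus C_1\simeq X\oplus Y\oplus C_2$ in~$\cat{K}$; since the left-hand side lies in~$\cat{C}$ and $\cat{C}$ is closed under summands, we conclude $X\in\cat{C}$. For closure under distinguished triangles, consider a triangle $\pi X\to\pi Y\to W\to$ in~$\SKE$ with $X,Y\in\cat{C}$. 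By \Cref{thm:Beligiannis} it is isomorphic in $\SKE$ to the image-triangle of some $\scE$-triangle $X'\to Y'\to Z'\to\Sigma X'$ in~$\cat{K}$, whence $\pi X\simeq\pi X'$ and $\pi Y\simeq\pi Y'$. Repeating the stable-isomorphism argument places $X',Y'\in\cat{C}$, and the $\scE$-two-out-of-three hypothesis then forces $Z'\in\cat{C}$, giving $W\simeq\pi Z'\in\pi(\cat{C})$.

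The mutual-inverse identities wrap things up. The equality $\pi(\pi\inv(\cat{J}))=\cat{J}$ holds for free because $\pi$ is the identity on objects and $\cat{J}$ is replete. The equality $\pi\inv(\pi(\cat{C}))=\cat{C}$ is yet another direct application of \Cref{lem:iso}: any $X\in\pi\inv(\pi(\cat{C}))$ satisfies $\pi X\simeq\pi C$ in $\SKE$ for some witness $C\in\cat{C}$, and running the same stable-isomorphism/summand-closure argument yet again puts $X$ in~$\cat{C}$. Order-preservation of both maps is clear, concluding the bijection.
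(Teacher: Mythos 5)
Your proof is correct and follows essentially the same route as the paper: both reduce thickness of a subcategory of~$\SKE$ to summand-closure plus the $\scE$-two-out-of-three condition via \Cref{thm:Beligiannis}, and both rely on \Cref{lem:iso} to control stable isomorphisms, the paper merely compressing your explicit mutual-inverse verification by citing the standard correspondence between subcategories of an additive quotient and subcategories of~$\cat{K}$ containing $\proj(\scE)$. (One cosmetic point: in the reverse direction you only treat the cone configuration, but since the $\scE$-two-out-of-three hypothesis is symmetric in the three vertices, the identical argument gives closure when any two of the three terms lie in~$\pi(\cat{C})$, which is what thickness of~$\pi(\cat{C})$ requires.)
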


\begin{proof}
The standard description of additive subcategories of the additive quotient $\SKE=\cat{K}/\proj(\scE)$ tells us that $\cat{J}\mapsto \pi\inv(\cat{J})$ identifies the lattice of thick subcategories of~$\SKE$ with a sublattice of additive subcategories of~$\cat{K}$ containing~$\proj(\scE)$. We need to identify the image of this bijection, \ie express what it means for~$\cat{J}$ to be thick in~$\SKE$ in terms of~$\cat{C}=\pi\inv(\cat{J})$ in~$\cat{K}$. Being thick certainly entails being closed under direct summands. (Note that $\cat{C}$ is then automatically stable under `$\scE$-stable isomorphisms' by \Cref{lem:iso}.) For $\cat{J}$ triangulated, we need to check the 2-out-of-3 property for triangles in~$\SKE$, which amounts to the $\scE$-two-out-of-three property of the statement by the fact that every distinguished triangle in~$\SKE$ is the image-triangle of an $\scE$-triangle by \Cref{thm:Beligiannis}.
\end{proof}

\begin{Rem}
In particular, the subcategory $\cat{C}=\pi\inv(\cat{J})$ in \Cref{prop:thick-rel-stable} is not necessarily stable under suspension $\Sigma\colon \cat{K}\to \cat{K}$. (It is `stable under $\Sigma_{\scE}$' so to speak.)
\end{Rem}

\begin{Thm}
\label{thm:3rdIT-birationally}%
Under \Cref{hyp:se:thick}, let $\cat{C}$ be an additive subcategory of~$\cat{K}$, containing~$\proj(\scE)$, closed under direct summands and satisfying the \emph{$\scE$-two-out-of-three property} of \Cref{prop:thick-rel-stable} (\ie its image $\pi(\cat{C})$ is a thick subcategory of~$\SKE$). Then the following are equivalent\,:
\begin{enumerate}[\rm(i)]
\item
$\cat{C}$ is itself a thick subcategory (\ie it is a triangulated subcategory) of~$\cat{K}$.
\item
$\cat{C}$ contains the thick subcategory $\thick(\proj(\scE))$.
\end{enumerate}
In that case, that is, for every thick \emph{triangulated} subcategory~$\cat{C}\subseteq\cat{K}$ containing~$\proj(\scE)$, we have a natural equivalence induced by~$\pi$
\[
\cat{K}\Verd\cat{C}\isotoo \SKE\Verd\pi(\cat{C})
\]
between the respective Verdier quotients, which is moreover exact.
\end{Thm}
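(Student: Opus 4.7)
The implication (i) $\Rightarrow$ (ii) will be immediate: any thick triangulated subcategory of $\cK$ containing $\proj(\scE)$ must contain the smallest such subcategory, namely $\thick(\proj(\scE))$. The substantive content lies in (ii) $\Rightarrow$ (i), where the plan is to verify that $\cat{C}$ is closed under $\Sigma^{\pm 1}$ and cones in $\cK$. I'll sketch the $\Sigma$-closure; the analogous moves will handle $\Sigma^{-1}$ and cones. Given $X \in \cat{C}$ with $\scE$-injective preenvelope $X \to X_\scE$, the $\scE$-triangle $X \to X_\scE \to \Sigma_\scE X \to \Sigma X$ together with the $\scE$-two-out-of-three property immediately places $\Sigma_\scE X$ in $\cat{C}$, and iterating gives $\Sigma_\scE^n X \in \cat{C}$ for every $n \ge 0$. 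To bridge from $\Sigma_\scE X$ to $\Sigma X$, I would form the homotopy pushout $P$ of the $\scE$-phantom $\alpha_X\colon \Sigma_\scE X \to \Sigma X$ along an $\inj(\scE)$-preenvelope $\Sigma_\scE X \to (\Sigma_\scE X)_\scE$. The decisive observation is that the ``second'' hpushout triangle $\Sigma X \to P \to \Sigma_\scE^2 X \to \Sigma^2 X$ has connecting map $\Sigma \alpha_X$, which is again an $\scE$-phantom because $\scE$ is closed under suspension; hence this triangle lies in $\scE$. Meanwhile $P$ will sit in a distinguished triangle $(\Sigma_\scE X)_\scE \to P \to \Sigma X_\scE \to \Sigma (\Sigma_\scE X)_\scE$ whose two outer terms both live in $\thick(\proj(\scE))$, forcing $P \in \thick(\proj(\scE)) \subseteq \cat{C}$. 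A final application of $\scE$-two-out-of-three to the displayed $\scE$-triangle then yields $\Sigma X \in \cat{C}$.

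For the Verdier quotient equivalence, once (i) is established I will examine the composite $\phi\colon \cK \xrightarrow{\pi} \SKE \to \SKE \Verd \pi(\cat{C})$, which evidently annihilates $\cat{C}$. To see that $\phi$ also inverts every map $f\colon X\to Y$ with $\cone_{\cK}(f) \in \cat{C}$, I plan to replace $f$ by a stably-equivalent $\scE$-monomorphism $f'$ (the standard trick of combining $f$ with the preenvelope $X\to X_\scE$ into a map $X \to Y \oplus X_\scE$); by~\Cref{ex:st-triangle-E} the image-triangle of $f'$ in $\SKE$ will be a genuine distinguished triangle with third term in $\pi(\cat{C})$, so $\pi(f')$ and therefore $\pi(f)$ become invertible in the target. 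The universal property of the Verdier quotient then furnishes the induced comparison $\bar\phi\colon \cK \Verd \cat{C} \to \SKE \Verd \pi(\cat{C})$. Essential surjectivity will be automatic since $\pi$ is identity on objects, and full faithfulness will follow from comparing the two induced calculi of fractions via the bijection of~\Cref{prop:thick-rel-stable}. Exactness of $\bar\phi$ drops out because in the target the comparison $\underline{\alpha}\colon \Sigma_\scE \to \underline{\Sigma}$ of~\Cref{cons:sigma-comparison} becomes a natural isomorphism: its cofibres in $\SKE$ are $\pi$-images of objects in $\thick(\proj(\scE))$, hence lie in $\pi(\cat{C})$ and are inverted. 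Once both suspensions agree, image-triangles become honest distinguished triangles with respect to $\underline{\Sigma}$ and $\bar\phi$ is exact.

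The hard part will be the homotopy-pushout construction in (ii) $\Rightarrow$ (i): engineering a configuration whose ``other'' triangle lies in $\scE$. The crucial ingredient is the fact that $\Sigma$ sends $\scE$-phantoms to $\scE$-phantoms, which is what places the key triangle $\Sigma X \to P \to \Sigma_\scE^2 X \to \Sigma^2 X$ in $\scE$. Extending this technique to cones of arbitrary morphisms of $\cK$ will require a further octahedral combination, typically with a $\proj(\scE)$-precover of the target; I expect this additional bookkeeping to proceed cleanly once $\Sigma^{\pm 1}$-closure is in hand.
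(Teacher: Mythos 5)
Your proposal is correct in outline, and for the implication (ii)$\Rightarrow$(i) it is essentially the paper's own argument: your pushout $P$ of the phantom $\alpha_X$ along a preenvelope of $\Sigma_\scE X$ is exactly the auxiliary object the paper constructs (its $W$, in the special case $Y=0$ of its two-pushout construction), and the paper simply runs that same construction for an arbitrary map $X\to Y$, which gives closure under cones in one stroke and recovers your suspension argument as the case $Y=0$. So the ``further octahedral combination'' you defer is precisely this and does go through \textemdash{} but note the second pushout is taken along a preenvelope of the intermediate pushout, not a $\proj(\scE)$-precover of the target. Two small repairs to your sketch: the connecting map of your triangle $\Sigma X\to P\to \Sigma_\scE^2X\to\Sigma^2X$ is the composite $\Sigma\alpha_X\circ\alpha_{\Sigma_\scE X}$ rather than $\Sigma\alpha_X$ itself, but the triangle still lies in $\scE$ because the phantoms form a suspension-stable ideal; and for $\Sigma^{-1}$-closure you should not desuspend a phantom (\Cref{hyp:E} only asserts closure of $\scE$ under positive suspension) \textemdash{} instead compose with the phantom of a $\proj(\scE)$-precover triangle, which again lands in the ideal.

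Where you genuinely diverge from the paper is the equivalence of Verdier quotients. The paper shows that $Q=q\circ\pi\colon\cat{K}\to\SKE\Verd\pi(\cat{C})$ is exact with object-kernel exactly $\cat{C}$ and then verifies that it has the universal property of $\cat{K}\Verd\cat{C}$ by factoring arbitrary exact functors killing $\cat{C}$; you instead build $\bar\phi$ from the localization property of $\cat{K}\Verd\cat{C}$ and want to check the equivalence directly. Your construction of $\bar\phi$ and your exactness mechanism (invertibility of $\underline{\alpha}$ in the quotient, its cones being images of objects of $\thick(\proj(\scE))\subseteq\cat{C}$) are sound and match the paper's key computations, but ``full faithfulness follows by comparing the two calculi of fractions'' is not yet an argument, since $\pi$ is neither faithful nor exact. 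The cleanest completion of your route is to produce the inverse functor: $\cat{K}\to\cat{K}\Verd\cat{C}$ kills $\proj(\scE)$, hence factors through $\SKE$, and it inverts every map of $\SKE$ whose cone lies in $\pi(\cat{C})$ (using that $\cat{C}$ is triangulated and contains the preenvelope summands one adds when forming the image-triangle), so it descends to $\SKE\Verd\pi(\cat{C})\to\cat{K}\Verd\cat{C}$, inverse to $\bar\phi$ by the uniqueness in the two localization properties; alternatively, follow the paper's universal-property argument, which buys the equivalence and its exactness simultaneously.
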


\begin{proof}
Clearly (i)$\then$(ii) since $\cat{C}$ contains~$\proj(\scE)$. Conversely, suppose that $\cat{C}$ contains~$\thick(\proj(\scE))$ and let us show that $\cat{C}$ is a triangulated subcategory of~$\cat{K}$.

Let us start with a general construction. Let $X\to Y\to Z\to \Sigma X$ be a distinguished triangle in~$\cat{K}$. Choose an $\inj(\scE)$-preenvelope $X\to I$ and consider the homotopy pushout (marked~$\square$) and the isomorphic cofibres~$Z$ in~$\cat{K}$:
\begin{equation}
\label{eq:aux1}%
\vcenter{
\xymatrix@C=2em@R=1.5em{
X \ar[r] \ar[d] \ar@{}[rd]|-{\square}
& Y \ar[d] \ar[r]
& Z \ar@{=}[d]
\\
I \ar[r]
& V \ar[r]
& Z\,.\!\!
}}
\end{equation}
Let now $V\to J$ be another $\inj(\scE)$-preenvelope and consider the `next' homotopy pushout (marked~$\square$) and the isomorphic fibres~$I$ in~$\cat{K}$:
\begin{equation}
\label{eq:aux2}%
\vcenter{
\xymatrix@C=2em@R=1.5em{
I \ar[r] \ar@{=}[d]
& V \ar[d] \ar[r] \ar@{}[rd]|-{\square}
& Z \ar[d]
\\
I \ar[r]
& J \ar[r]
& W.\!
}}
\end{equation}
The bottom triangle tells us that $W\in\thick(\inj(\scE))$ and our assumption about $\cat{C}\supseteq\thick(\proj(\scE))$ implies that~$W$ belongs to~$\cat{C}$. Now the second homotopy pushout involves an $\scE$-triangle (since $V\to J$ is an $\scE$-monomorphism)
\begin{equation}
\label{eq:aux3}%
V \to J \oplus Z \to W \to \Sigma V.
\end{equation}
This finishes the general construction.

Suppose now that $X$ and~$Y$ are in~$\cat{C}$ and let us prove that $Z\in\cat{C}$. By the $\scE$-two-out-of-three property applied to~\eqref{eq:aux1}, the object~$V$ (which is the cone of $\pi(X\to Y)$ in~$\SKE$) belongs to~$\cat{C}$. It now follows from the $\scE$-two-out-of-three property applied to~\eqref{eq:aux3}, that $J\oplus Z$, and therefore~$Z$, belong to~$\cat{C}$ as claimed. This proves (ii)$\then$(i).

Consider now the composite of the additive quotient and the Verdier quotient
\[
Q\colon \cat{K}\otoo{\pi}\SKE\otoo{q}\SKE\Verd\pi(\cat{C})\,.
\]
We claim that this functor is exact. So let $X\to Y\to Z\to \Sigma X$ be a distinguished triangle in~$\cat{K}$ and produce the diagrams~\eqref{eq:aux1}, \eqref{eq:aux2} and~\eqref{eq:aux3} as above. Since~\eqref{eq:aux3} is an $\scE$-triangle, we know that it induces an exact triangle $V\to J\oplus Z \to W \to \Sigma_\scE V$ in~$\SKE$. Since $J,W\in\cat{C}$, the image of this triangle under the exact~$q\colon\SKE\to \SKE\Verd\pi\cat{C}$ reduces to an isomorphism~$V\isoto Z$ in~$\SKE\Verd\pi\cat{C}$. More precisely, the map $V\to Z$ in~\eqref{eq:aux1} becomes an isomorphism in~$\SKE\Verd\pi(\cat{C})$. But~\eqref{eq:aux1} provides us an exact triangle~$X\to Y\to V \to \Sigma_\scE X$ in~$\SKE$ and therefore in~$\SKE\Verd\pi(\cat{C})$, in which we can now replace~$V$ by~$Z$, and use the natural transformation $\underline{\alpha}$ of Construction~\ref{cons:sigma-comparison} which is an isomorphism in~$\SKE\Verd\pi(\cat{C})$, to obtain exactly the image under~$Q$ of our original triangle (cf.\ (\ref{eq:image-triangle}) and the discussion that follows). In other words, $Q$ is exact. Along the way (or in the special case of $Y=0$) we have shown that $Q$ commutes with suspension, via the natural transformation $\underline{\alpha}$ induces.

It is easy to see that the object-kernel~$\Ker(Q)$ of the above functor $Q$ is exactly~$\cat{C}$, via \Cref{lem:iso} and $\cat{C}\supseteq\proj(\scE)$.

To summarize, the composite functor $Q\colon \cat{K}\to \SKE\Verd\pi(\cat{C})$ is an epimorphism (of additive categories) and it is exact with kernel~$\cat{C}$. Now any exact functor $F\colon \cat{K}\to \cat{L}$ which maps~$\cat{C}$ to zero, maps~$\proj(\scE)$ to zero and therefore factors via~$\pi$\,:
\[
\xymatrix@C=4em{
\cat{K} \ar[r]^-{\pi} \ar[d]_-{F}
& \SKE \ar[r]^-{q} \ar@{-->}[ld]_-{(1)}^(.4){\exists\,F'}
& \SKE\Verd\pi(\cat{C}) \ar@{-->}@/^1em/[lld]_(.4){(2)}^(.4){\exists\,F''}
\\
\cat{L}
}
\]
We claim that $F'$ is necessarily exact. Distinguished triangles in~$\SKE$ come, in particular, from \emph{some} distinguished triangles in~$\cat{K}$ up to altering the suspension as in Example~\ref{ex:st-triangle-E}. Since $F$ is exact and kills $\proj(\scE)$ it identifies $\Sigma$ and $\Sigma_\scE$ (the latter being well defined after applying $F$), and so is insensitive to our sleight of hand with the suspension. Thus $F'$ is exact as claimed. We also know that $F'(\pi\cat{C})=0$. So $F'$ factors further via~$\SKE\Verd\pi(\cat{C})$. In short, the functor $Q=q\circ\pi$ is an exact epimorphism, with kernel~$\cat{C}$ and every exact functor out of~$\cat{K}$ which kills~$\cat{C}$ factors (necessarily uniquely) via~$Q$. This uniquely characterizes the Verdier quotient $\cat{K}\Verd\cat{C}$.
\end{proof}

The following corollary recovers our original inspiration~\cite[Theorem~3.4]{BalandStevenson19}. Compare also~\cite[Proposition~6.3]{Carlson18} in modular representation theory.

\begin{Cor}\label{cor:generalBS}
Let $\cat{K}$ be a triangulated category, $\scE$ a Frobenius class of triangles, and $\pi\colon\cat{K}\to \SKE$ be the corresponding quotient functor. Let
\begin{displaymath}
\cat M = \thick_{\cat{K}}(\proj(\scE))
\end{displaymath}
be the thick subcategory of $\cat{K}$ generated by the $\scE$-projective-injectives. Then:
\begin{enumerate}[\rm(a)]
\item The full subcategory $\pi(\cat{M})$ is thick in~$\SKE$.
\item The functor $\pi$ induces an equivalence of the Verdier quotients
\begin{displaymath}
\bpi\colon \cat{K}\Verd\cat{M} \stackrel{\sim}{\to} \SKE\Verd\pi \cat{M}\,.
\end{displaymath}
\item The functor $\bpi$ is exact.
\qed
\end{enumerate}
\end{Cor}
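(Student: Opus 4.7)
The proposal is to deduce the corollary as a direct specialization of Theorem~\ref{thm:3rdIT-birationally} to the particular subcategory $\cat{C}=\cat{M}=\thick_{\cat{K}}(\proj(\scE))$. So the plan reduces to checking the hypotheses of that theorem for this choice of $\cat{C}$.

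First I would verify that $\cat{M}$ meets the standing assumptions of \Cref{prop:thick-rel-stable} needed to apply \Cref{thm:3rdIT-birationally}: $\cat{M}$ contains $\proj(\scE)=\inj(\scE)$ by construction, and being a thick subcategory of $\cat{K}$ it is automatically closed under direct summands. For the $\scE$-two-out-of-three property, I would note that it is a restriction of the ordinary two-out-of-three property (to the sub-family $\scE\subseteq\Edist$), which $\cat{M}$ satisfies since it is triangulated in~$\cat{K}$. This already gives part~(a) by \Cref{prop:thick-rel-stable}, since that proposition identifies thick subcategories of $\SKE$ with such $\cat{C}$'s via $\cat{J}\mapsto\pi^{-1}(\cat{J})$ and $\cat{C}\mapsto\pi(\cat{C})$.

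Next, for parts~(b) and~(c), I would observe that the equivalence (i)$\Leftrightarrow$(ii) of \Cref{thm:3rdIT-birationally} is satisfied trivially, since $\cat{M}$ is by definition equal to $\thick(\proj(\scE))$. The theorem then supplies the induced equivalence of Verdier quotients $\bpi\colon\cat{K}\Verd\cat{M}\isoto\SKE\Verd\pi(\cat{M})$ and asserts its exactness, which is exactly the content of~(b) and~(c).

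There is no real obstacle here; the proof is a one-line invocation of \Cref{thm:3rdIT-birationally} once one observes that $\cat{M}$ is the smallest (and hence the most natural) instance of a subcategory satisfying the hypotheses of that theorem. The only minor verification worth recording is the reduction from the ordinary two-out-of-three to the $\scE$-two-out-of-three property for $\cat{M}$, which is immediate since $\scE$ consists of distinguished triangles.
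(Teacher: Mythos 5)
Your proposal is correct and is exactly the paper's (unwritten) argument: the corollary is stated with a \qed precisely because it is the specialization of \Cref{thm:3rdIT-birationally} to $\cat{C}=\cat{M}=\thick(\proj(\scE))$, with part~(a) coming from \Cref{prop:thick-rel-stable}. Your one extra verification\textemdash{}that the $\scE$-two-out-of-three property for the triangulated subcategory $\cat{M}$ follows from the ordinary two-out-of-three since $\scE$ consists of distinguished triangles\textemdash{}is the right observation and is all that needs checking.
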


\begin{Rem}
\label{rem:birational}
Let us emphasize the content of \Cref{thm:3rdIT-birationally} and \Cref{cor:generalBS}. We have seen, in \Cref{lem:notexact1}, that $\pi\colon \cat{K}\to \SKE$ is not exact outside of degenerate situations. Yet, \Cref{thm:3rdIT-birationally} shows that one can compare some Verdier quotients of~$\cat{K}$ with the corresponding Verdier quotients of~$\SKE$, via exact functors. The class of permissible quotients is defined by the thick subcategories containing a minimal one: $\cat{M}=\thick(\proj(\scE))$. In fact, if there exists a functor~$F\colon \SKE\to \cat{L}$ such that the composite $F\circ\pi\colon \cat{K}\to \cat{L}$ is exact then $F\circ \pi$ vanishes on~$\cat{M}$ (since it is exact and vanishes on $\proj(\scE)$) and therefore induces an exact functor $\cat{K}\Verd\cat{M}\to \cat{L}$. This shows that $\cat{K}\Verd\cat{M}$ is the largest quotient of~$\cat{K}$ `on which $\pi$ (or anything factoring via~$\pi$) is exact'. \Cref{cor:generalBS} tells us that this Verdier quotient of~$\cat{K}$ is indeed also a Verdier quotient of~$\SKE$, with respect to the corresponding thick subcategory~$\pi\cat{M}$.

The existence of the exact equivalence $\xymatrix@C=2em{\cat{K}\ar@{-->}[r]&\SKE}$ defined only on suitable quotients modulo thick subcategories reminds us of birationality in algebraic geometry, \ie of a morphism which is only defined on certain open subschemes, `away' from $\cat{M}=\thick(\proj(\scE))$. We shall return to this `birationality' result in \Cref{se:birationality}, when we have the conceptual force of tt-geometry behind us.
\end{Rem}

\begin{Rem}\label{rem:noBS}
It can happen that $\cat M = \cat{K}$ in \Cref{cor:generalBS}, \ie that $\proj(\scE)$ generates~$\cat{K}$, in which case, \Cref{thm:3rdIT-birationally} is largely devoid of content.
\end{Rem}

\threestars

Let us continue on in the setting of Hypothesis~\ref{hyp:se:thick}. It is natural to wonder what we can say about the lattice of thick subcategories of $\SKE$ in terms of the lattice for~$\cat{K}$. We learned in Proposition~\ref{prop:thick-rel-stable} that there is a bijective correspondence between the thick subcategories of $\SKE$ and certain additive subcategories of $\cat{K}$, namely those containing~$\proj(\scE)=\inj(\scE)$, closed under direct summands and satisfying the $\scE$-two-out-of-three property. But, these are not necessarily triangulated; the ones which are triangulated are precisely those containing $\thick(\proj(\scE))$ by Theorem~\ref{thm:3rdIT-birationally}. It turns out that there can be many of the former, non-$\cat{K}$-triangulated, variety and that the lattices can exhibit starkly divergent behaviors. The following construction hints at this and was discovered by Jon Carlson in representation theory, see~\cite{Carlson18}.

\begin{Prop}\label{prop:Jon}
Let $G\colon\cat{K} \to \cat{N}$ be an exact functor between triangulated categories such that every triangle in $\scE$ is $G$-split, i.e. the $\scE$-phantoms are contained in~$\ker G$. Let $\cat A \subseteq \cat{N}$ be an additive subcategory of $\cat{N}$ which contains $G(\proj(\scE))$ and is closed under direct summands. Then 
\begin{displaymath}
\pi(G^{-1}(\cat A)) = \{X\in \SKE \;\vert\; X\cong \pi Y \text{ with } Y\in \cat{K} \text{ and } GY\in \cat A\},
\end{displaymath}
is thick in~$\SKE$.
\end{Prop}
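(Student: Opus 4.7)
The plan is to apply Proposition~\ref{prop:thick-rel-stable} by examining the preimage of $\pi(G^{-1}(\cat A))$ in $\cat{K}$, so write $\cat{D} = G^{-1}(\cat A)$ and $\cat{C} = \pi^{-1}(\pi(\cat{D}))$. We need to show $\cat{C}$ is an additive, summand-closed subcategory of~$\cat{K}$ containing $\proj(\scE)$ and enjoying the $\scE$-two-out-of-three property.

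The key intermediate step, and the one doing most of the work, is to identify $\cat{C} = \cat{D}$. One direction is immediate. For the other, if $Y \in \cat{C}$ then $\pi(Y) \simeq \pi(Y')$ in $\SKE$ for some $Y' \in \cat{D}$, so by \Cref{lem:iso} there exist $P_1, P_2 \in \proj(\scE)$ with $Y \oplus P_1 \simeq Y' \oplus P_2$ in~$\cat{K}$. Applying~$G$ yields $GY \oplus GP_1 \simeq GY' \oplus GP_2$ in~$\cat{N}$; the right-hand side lies in~$\cat A$ because $GY' \in \cat A$ by assumption, $GP_2 \in G(\proj(\scE)) \subseteq \cat A$ by hypothesis, and $\cat A$ is additive. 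Since $\cat A$ is closed under summands, $GY \in \cat A$, \ie $Y \in \cat{D}$.

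With $\cat{C} = \cat{D}$ in hand, the remaining axioms are routine. Additivity of~$\cat{D}$ and containment of $\proj(\scE)$ follow from $\cat A$ being additive and containing $G(\proj(\scE))$. Summand-closure of~$\cat{D}$ follows from summand-closure of~$\cat A$ and the fact that $G$ preserves direct sum decompositions. For the $\scE$-two-out-of-three property, take an $\scE$-triangle $X\to Y \to Z \otoo{h} \Sigma X$. Since $h$ is an $\scE$-phantom and $\ker G$ contains all $\scE$-phantoms, $G(h) = 0$, so the distinguished triangle $GX \to GY \to GZ \otoo{0} \Sigma GX$ is split in~$\cat{N}$, giving $GY \simeq GX \oplus GZ$. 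If both $GX, GZ \in \cat A$, then $GY \in \cat A$ by additivity; if $GY$ and one of $GX, GZ$ lie in~$\cat A$, then the remaining one is a direct summand of $GY$, hence lies in $\cat A$ by summand-closure. In each case the third vertex lies in~$\cat{D} = \cat{C}$.

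The main (minor) obstacle is the identification $\cat{C} = \cat{D}$; once that is settled, the closure properties of~$\cat A$ and the $G$-splitting of $\scE$-triangles translate directly into the hypotheses of Proposition~\ref{prop:thick-rel-stable}, which delivers thickness of $\pi(\cat{D}) = \pi(G^{-1}(\cat A))$ in $\SKE$.
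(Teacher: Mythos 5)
Your proof is correct and follows essentially the same route as the paper: check that $G^{-1}(\cat{A})$ is additive, summand-closed, contains $\proj(\scE)$, and satisfies the $\scE$-two-out-of-three property (using that every $\scE$-triangle becomes split under $G$), then invoke \Cref{prop:thick-rel-stable}. Your additional identification $\pi^{-1}(\pi(G^{-1}(\cat{A}))) = G^{-1}(\cat{A})$ via \Cref{lem:iso} is not strictly needed once that proposition is applied, but it is harmless and nicely confirms the displayed description of the subcategory.
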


\begin{proof}
By assumption the additive subcategory~$\cat{C}:=G\inv(\cat{A})$ of~$\cat{K}$ contains~$\proj(\scE)$ and is closed under direct summands. We claim that $\cat{C}$ satisfies the $\scE$-two-out-of-three property of \Cref{prop:thick-rel-stable}. This is easy since any $\scE$-triangle becomes split under~$G$ and $\cat{A}$ is closed under sums and summands. Then \Cref{prop:thick-rel-stable} tells us that $\pi\cat{C}$ is then a thick subcategory of~$\SKE$.
\end{proof}

\begin{Rem}
A simple example, which is already of interest, is given by supposing we are in the Wirthm\"ullery situation of \Cref{thm:Wirthmuller} and taking $G=F$.
\end{Rem}

\begin{Rem}\label{rem:Jon}
Of course, the above construction $\Phi\colon \cat A \mapsto \pi(G^{-1}\cat A)$ preserves inclusions. Thus, we can start with a proper chain of suitable categories
\begin{displaymath}
\cat A_1 \subsetneq \cdots \subsetneq \cat A_n \subsetneq \cat A_{n+1} \subsetneq \cdots
\end{displaymath}
in $\cat{N}$ and obtain a chain of \emph{thick} subcategories of $\SKE$
\begin{displaymath}
\Phi\cat A_1 \subseteq \cdots \subseteq \Phi\cat A_n \subseteq \Phi\cat A_{n+1} \subseteq \cdots
\end{displaymath}
The rub is that our chain may no longer be proper, which naturally leads to the following question. Under which conditions can we say that $\cat A_1 \subsetneq \cat A_2$ implies $\Phi\cat A_1 \subsetneq \Phi\cat A_2$? For an example where this does happen, we refer to \cite[proof of Theorem~5.1]{Carlson18}.
\end{Rem}

\goodbreak
\section{Algebraicity}
\label{se:algebraicity}%
\medbreak

In this section we will prove \Cref{thm:algebraicity-intro}, showing that passing to the relative stable category preserves the property of being algebraic. For a refresher on exact categories see \Cref{rec:exact}.

\begin{Thm}
\label{thm:algebraicity}%
Let $\cat{F}$ be a Frobenius exact category, with admissible exact sequences~$\scS$, let $\cat{K}=\underline{\cat{F}}=\cat{F}/\proj(\scS)$ be its stable category and let $\varpi\colon \cat{F}\to \cat{K}$ be the quotient functor. Suppose that $\scE$ is a Frobenius class of triangles in~$\cat{K}$. Define~$\SE\subseteq\scS$ to be the following class of admissible sequences in~$\cat{F}$:
\[
\SE:=\SET{\EE=\big(M'\overset{i}\into M\overset{p}\onto M''\big)\in\scS}{\textrm{the image triangle }\varpi\EE\textrm{ belongs to }\scE\textrm{ in }\cat{K}}\,.
\]
Then $\SE$ defines a Frobenius exact structure on~$\cat{F}$ whose projective-injective objects $\varpi\inv(\proj(\scE))$ are those objects of~$\cat{F}$ which are $\scE$-projective-injective in~$\cat{K}$. Moreover, the stable category of this new Frobenius category is equivalent to~$\SKE$ as a triangulated category in such a way that the canonical diagram commutes\,:
\[
\xymatrix@R=1.5em{
\cat{F} \ar[r]^-{\varpi} \ar[d]_-{\varpi'}
& \cat{F}/\proj(\scS)=\cat{K} \ar[d]^-{\pi}
\\
\cat{F}/\proj(\SE) \ar[r]^-{\cong}
& \SKE\,.\!\!
}
\]
\end{Thm}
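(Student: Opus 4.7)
The plan is to verify, in this order: (i) that $\SE$ is an exact structure on $\cat{F}$; (ii) that $\SE$ is Frobenius with projective-injective class $\cat{P}:=\varpi\inv(\proj(\scE))$; and (iii) that the composite $\pi\circ\varpi\colon\cat{F}\to\SKE$ descends to a triangulated equivalence $\cat{F}/\proj(\SE)\isoto\SKE$.

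For (i), most axioms descend directly from $\scS$: a sequence lies in $\SE$ iff it is $\scS$-admissible \emph{and} its image triangle is in $\scE$. Isomorphism closure, the kernel--cokernel property, and containment of split sequences are immediate since $\scE$ contains the split triangles and is isomorphism-closed. For composition closure of $\SE$-admissible monomorphisms, I would use composition closure on both sides. For pushouts, the key input is that an $\scS$-admissible pushout in $\cat{F}$ is sent by $\varpi$ to a homotopy pushout of triangles in $\cat{K}$, and \Cref{hyp:E} guarantees $\scE$-monomorphisms are closed under homotopy pushout; admissible epis and pullbacks are dual.

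For (ii), I would establish in both directions the characterization ``$P\in\cat{F}$ is $\SE$-projective iff $\varpi(P)\in\proj(\scE)$''. One direction: every $\scE$-triangle in $\cat{K}=\underline{\cat{F}}$ is, up to isomorphism, the image of an $\scS$-admissible sequence (Happel's description of triangles in a stable category), which by construction lies in~$\SE$; hence an $\SE$-projective must satisfy the $\scE$-projective lifting property. The other direction: given $\varpi(P)\in\proj(\scE)$, an $\SE$-admissible epi $p\colon M\onto M''$ and $f\colon P\to M''$, lift $\varpi(f)$ through $\varpi(p)$ in $\cat{K}$, pull back the lift to $\cat{F}$ via fullness of $\varpi$, and correct the resulting error (which factors through an $\scS$-projective) using $\scS$-projectivity of that object against the $\scS$-admissible epi~$p$. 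The dual statement for injectives together with $\proj(\scE)=\inj(\scE)$ forces $\proj(\SE)=\inj(\SE)=\cat{P}$. For enough $\SE$-projectives, given $X\in\cat{F}$ I would take an $\scE$-projective precover $Q\onto\varpi(X)$ in $\cat{K}$, lift $Q$ to some $\tilde Q\in\cat{F}$ with a morphism $\tilde Q\to X$ lifting the precover, and adjoin an $\scS$-projective cover $P_X\onto X$; the combined map $\tilde Q\oplus P_X\to X$ is an $\scS$-admissible epi whose image triangle is isomorphic to the precover triangle, hence in $\scE$, so it lies in $\SE$ and its source is in $\cat P$. Enough injectives is dual.

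For (iii), since $\pi\varpi$ kills exactly $\cat{P}=\proj(\SE)$ on objects, it factors uniquely through an additive functor $\Phi\colon\cat{F}/\proj(\SE)\to\SKE$ making the claimed square commute; $\Phi$ is essentially surjective and full because $\varpi$ and $\pi$ are. Faithfulness is the chase: if $\varpi(f)$ factors through $Q\in\proj(\scE)$ in $\cat{K}$ as $\beta\alpha$, lift $Q$ to $\tilde Q\in\cat{F}$, lift $\alpha,\beta$ by fullness, and observe that $f-\tilde\beta\tilde\alpha$ factors through some $\scS$-projective $P_0$, so $f$ factors through $\tilde Q\oplus P_0\in\proj(\SE)$. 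To upgrade the additive equivalence to a triangulated one, note that Happel's suspension on $\cat{F}/\proj(\SE)$ is defined by $\SE$-injective preenvelopes in $\cat{F}$ whose images under $\varpi$ are $\scE$-injective preenvelopes in $\cat{K}$ (up to a vanishing $\scS$-injective summand), matching the Beligiannis definition of $\Sigma_\scE$ from \Cref{thm:Beligiannis}; and the distinguished triangles in both categories are, by their respective constructions, the image triangles of $\SE$-sequences in $\cat{F}$, which are identified under $\varpi$ with image triangles of $\scE$-triangles in $\cat{K}$. The main obstacle I expect is step (ii), specifically producing enough $\SE$-projectives: one must manufacture a genuine admissible epimorphism in $\cat{F}$ out of data (an $\scE$-precover) that a priori lives only in $\cat{K}$, and the tandem of a lift of the precover with a genuine $\scS$-projective cover is what bridges the two.
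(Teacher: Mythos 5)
Your proposal is correct and follows essentially the same route as the paper: the same definition of $\SE$, verification of the exact-structure axioms by testing under~$\varpi$ (using that admissible pushouts become homotopy pushouts), identification of the $\SE$-projective-injectives with $\varpi\inv(\proj(\scE))$, production of enough $\SE$-projectives by combining a lift of an $\scE$-precover with an $\scS$-projective cover, and the ``third isomorphism'' identification $\cat{F}/\varpi\inv(\proj(\scE))\cong\SKE$ of the stable categories. The only local difference is that the paper characterizes the $\SE$-projectives via the factorization of the $\Ext^1_{\cat{F}}$ connecting map through $\cat{K}(\varpi Q,h)$, whereas you use a direct lifting-and-correction argument exploiting fullness of $\varpi$ and $\scS$-projectivity of the error term; the two arguments are interchangeable.
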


\begin{proof}
It is easy, as we outline below, to check that $\SE$ is indeed an exact structure: Some of the axioms are inherited from~$\scS$ and the rest are verified by applying~$\varpi\colon \cat{F}\to \cat{K}$ and using the axioms of a proper class of triangles for~$\scE$. By definition (admissible) $\SE$-monomorphisms are precisely the (admissible) $\scS$-monomorphisms whose image under~$\varpi$ are $\scE$-monomorphisms. And similarly with epimorphisms.

So, it is straightforward that $\SE$ is closed under isomorphisms and contains the split exact sequences by additivity of~$\varpi$ and since~$\scE$ has these properties. We also inherit from $\scS$ that sequences in $\SE$ are kernel-cokernel pairs. Similarly, $\SE$-monomorphisms and $\SE$-epimorphisms are closed under composition, since this holds for~$\scS$, and is true of $\scE$-monomorphisms and $\scE$-epimorphisms. Finally, let us prove that pushouts of $\SE$-monomorphisms exist and are $\SE$-monomorphisms, the pullbacks of $\SE$-epimorphisms being dual. Again, existence is inherited from~$\scS$ and provides $\scS$-monomorphisms that still need to be shown to be $\SE$-admissible. We want to test this under~$\varpi$ by using the analogous property for~$\scE$-monomorphisms. To this end, it suffices to know that a pushout square along an $\scS$-monomorphism yields a homotopy pushout square in~$\cat{K}$, which is immediate from the formulation of those squares in terms of $\scS$-exact sequences and distinguished triangles respectively, and the fact that $\varpi$ maps $\scS$-exact sequences to distinguished triangles.

Let us now see why $\SE$-projectives coincide with $\varpi\inv(\proj(\scE))$. Let $Q\in\cat{F}$ be an object and consider an $\scS$-exact sequence~$\EE=\big(M'\overset{i}\into M\overset{p}\onto M''\big)$ and its image distinguished triangle in~$\cat{K}$, for some morphism~$h$
\[
\varpi\EE=\big(\xymatrix{\varpi M' \ar[r]^-{\varpi (i)} &\varpi M \ar[r]^-{\varpi (p)} &\varpi M'' \ar[r]^-{h} & \Sigma\varpi M'
}\big).
\]
The connecting map~$\partial$ in the $\Ext^\sbull_{\cat{F}}(Q,-)$ long exact sequence induced by~$\EE$
\[
\partial\colon\cat F(Q, M'') \to \Ext^1_{\cat F}(Q, M')
\]
factors as the following composite
\[
\xymatrix{
\cat F(Q,M'') \ar[r]^-{\varpi} & \cat{K}(\varpi Q, \varpi M'') \ar[rr]^-{\cat{K}(\varpi Q, h)} && \cat{K}(\varpi Q, \Sigma \varpi M')\cong \Ext^1_{\cat F}(Q,M').
}
\]
As the first map is surjective, we see that the following two properties are equivalent:
\begin{enumerate}[(A)]
\item
The homomorphism $\cat{K}(\varpi Q,h)$ is zero.
\item
The connecting map $\partial$ is zero, \ie the following short sequence is exact:
\[
0\to \cat{F}(Q,M')\to \cat{F}(Q,M)\to \cat{F}(Q,M'')\to 0\,.
\]
\end{enumerate}
We now vary the quantifiers: We fix~$Q$ but we let $\EE$ vary among all $\SE$-exact sequences, which is equivalent to $\varpi\EE$ varying among all $\scE$-triangles since any triangle in the stable category is isomorphic to the image of some $\scS$-exact sequence. Property (A) for all such~$\EE$ (that is, for all $\scE$-phantoms $h$) says that $\varpi Q$ is $\scE$-projective; see \Cref{prop:basic-proper}. Property~(B) for all such~$\EE$ says that $Q$ is $\SE$-projective. This proves that the class of $\SE$-projectives coincides with $\varpi\inv(\proj(\scE))$.

A dual argument proves that the class of $\SE$-injectives coincides with $\varpi\inv(\inj(\scE))$. Since $\scE$ is Frobenius, it also follows that $\SE$-projectives and $\SE$-injectives coincide. We now need to show there are enough of them.

Let $X$ be an object of $\cat F$. We can find a $\proj(\scE)$-precover of~$\varpi X$ in~$\cat{K}$, which can be written $\varpi(q)\colon \varpi Q\to \varpi X$ for some $Q\in \varpi\inv(\proj(\scE))$ and some morphism $q\colon Q\to X$ in~$\cat{F}$. By the above discussion, we already know that $Q$ is $\SE$-projective. Replacing $q\colon Q\to X$ by $\smat{q&p}\colon Q\oplus P\to X$ where~$p\colon P\to X$ is an $\scS$-projective cover, we can assume that $q\colon Q\to X$ is an $\scS$-epimorphism in~$\cat{F}$, without changing the image~$\varpi(q)$ since $\varpi(P)=0$. Consider the associated $\scS$-exact sequence in~$\cat{F}$
\[
\EE=\big(Y \overset{j}\into Q \overset{q}\onto X\big)\,.
\]
Its image under~$\varpi$ is a distinguished triangle
\[
\varpi\EE=\big(\xymatrix{
\varpi Y \ar[r]^-{\varpi(j)}
&\varpi Q \ar[r]^-{\varpi(q)}
&\varpi X \ar[r]^-{k}
& \Sigma \varpi Y
}\big)
\]
for some morphism~$k$. Since $\varpi(q)$ was chosen to be a $\proj(\scE)$-precover, it is an $\scE$-epimorphism and this triangle belongs to~$\scE$. By definition of~$\SE$, the sequence~$\EE$ then belongs to~$\SE$. In short, $q\colon Q\to X$ is an $\SE$-precover. Dually, $\cat{F}$ admits enough $\SE$-injectives. So $\cat{F}$ is also Frobenius with the (smaller) class of exact sequences~$\SE$.

It only remains to verify that the $\SE$-stable category of $\cat{F}$ is equivalent to~$\SKE$ as triangulated categories. But this is clear from the fact that the $\SE$-projective-injective objects are precisely those in~$\mcQ:=\varpi\inv(\proj(\scE))$, and a standard `third isomorphism theorem' for additive quotients\,:
\[
\cat{K}/\proj(\scE)=(\cat{F}/\proj(\scS))\big/(\mcQ/\proj(\scS))\cong \cat{F}/\mcQ.
\]
Exactness of this equivalence is left as an easy exercise.
\end{proof}

\goodbreak
\section{Tensor-triangulated stable categories}
\label{se:tt-stab}%
\medbreak

From now on, we assume that $\cat{K}$ is not only triangulated but \emph{tensor}-triangulated. We discuss what to assume on~$\cat{K}$ and~$\scE$ to make sure that the relative stable category~$\SKE$ becomes tensor-triangulated. We also discuss how the tensor on~$\cat{K}$ naturally produces some interesting Frobenius classes~$\scE$. We then discuss the impact on the triangular spectra~$\SpcK$ and $\Spc(\SKE)$.

Our assumption that $\cat{K}$ is a \emph{tensor-triangulated} category (\emph{tt-category} for short) means that $\cat{K}$ admits a symmetric monoidal structure
\[
\otimes\colon \cat{K}\times \cat{K}\too \cat{K}
\]
which is exact in each variable, \ie $X\otimes ?\colon \cat{K}\to \cat{K}$ is exact for every $X\in\cat{K}$, and such that the two induced isomorphisms $\Sigma ? \otimes \Sigma ?? \isoto \Sigma^2(?\otimes ??)$ commute up to sign. See details in~\cite[App.\,A]{HoveyPalmieriStrickland97}. We denote the $\otimes$-unit by~$\unit$.

\begin{Def}
\label{def:rigid}%
An object $X\in\cat{K}$ is called \emph{rigid} (\aka \emph{strongly dualizable}) if there exists a object $X^\vee\in\cat{K}$, called its \emph{dual}, and an adjunction
\begin{equation}
\label{eq:adj-X}%
\vcenter{\xymatrix@R=1.5em{
\cat{K} \ar@<-.3em>[d]_-{X\otimes ?}
\\
\cat{K} \ar@<-.3em>[u]_-{X^\vee\otimes ?}
}}
\end{equation}
Of interest to us will be the unit and counit of this adjunction (at~$\unit$), denoted
\[
\coev\colon \unit\to X^\vee\otimes X
\qquadtext{and}
\ev\colon X\otimes X^\vee \to \unit\,.
\]
We say that $\cat{K}$ is \emph{rigid} if every object~$X\in\cat{K}$ is rigid and the functor $(-)^\vee$ is exact; rigidity implies $\cat{K}$ is closed monoidal and exactness of~$(-)^\vee$ is equivalent to the internal hom being exact in both variables. Standard monoidal adjunction theory gives canonical isomorphisms $(X\otimes Y)^\vee\cong X^\vee\otimes Y^\vee$ and $X^{\vee\vee}\cong X$, etc.
\end{Def}

\begin{Rem}
\label{rem:monoidal-functors-preserve-rigidity}%
It is well-known that if $F\colon \cat{K}\to \cat{L}$ is monoidal and $X$ is rigid in~$\cat{K}$ then $F(X)$ is rigid in~$\cat{L}$, with dual $F(X)^\vee=F(X^\vee)$. Indeed, it suffices to transport the unit and counit of the adjunction (and their relations) under~$F$.
\end{Rem}

The compatibility of proper classes of triangles with tensor is the obvious one:
\begin{Def}
\label{def:tt-proper}%
We say that a proper class of triangles~$\scE$ as in \Cref{hyp:E} is a \emph{\ttclass} if it is closed under tensoring in the sense that if
\begin{displaymath}
\xymatrix{
X \ar[r]^-f & Y \ar[r]^-g & Z\ar[r]^-h & \Sigma X
}
\end{displaymath}
belongs to~$\scE$ and $W\in \cat{K}$ then the triangle
\begin{displaymath}
\xymatrix{
X \otimes W \ar[r]^-{f\otimes W} & Y\otimes W \ar[r]^-{g\otimes W} & Z\otimes W \ar[r]^-{h\otimes W} & \Sigma X \otimes W
}
\end{displaymath}
also lies in~$\scE$. This is equivalent to asking that the class of $\scE$-monomorphisms (or the class of $\scE$-epimorphisms or the ideal of phantoms $\Ph_{\scE}$) is closed under tensoring. Since $\Ph_{\scE}$ is an ideal, it follows from being closed under tensoring with objects (\ie identity morphisms) that it is $\otimes$-closed, \ie closed under tensoring with all maps.
\end{Def}

\begin{Rem}
It is however not immediate from $\scE$ being a Frobenius \ttclass\ that $\SKE$ becomes a tt-category. Indeed, we still need to show that $\proj(\scE)$ is tensor-closed.
\end{Rem}

\begin{Def}
\label{def:hom-closed}%
If $\cat{K}$ has an internal hom, which we denote $\hom\colon \cat{K}\op\times\cat{K}\too\cat{K}$, it also makes sense to ask for $\scE$ to be closed under $\hom(W,-)$ for all $W\in \cat{K}$. In this case we say $\scE$ is \emph{$\hom$-closed} and evidently this corresponds to $\hom$-closure of $\Ph_{\scE}$.
\end{Def}

\begin{Lem}\label{lem:ttproj}
If $\scE$ is a proper $\hom$-closed class of triangles then $\proj(\scE)$ is closed under tensoring. In particular, the ideal $\mcN$ of maps factoring via $\proj(\scE)$ is a $\otimes$-closed. Dually, if $\scE$ is a proper \ttclass\ of triangles, then $\inj(\scE)$ is $\hom$-closed. In particular, the ideal $\mcN'$ of maps factoring via $\inj(\scE)$ is $\hom$-closed.

If moreover $\scE$ has enough projectives (injectives) then the above implications are equivalences.
\end{Lem}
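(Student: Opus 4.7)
\medbreak

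\noindent \textbf{Proof proposal.}
The plan is to exploit the tensor-hom adjunction $(-)\otimes W \adj \hom(W,-)$ together with the cohomological characterisations of $\scE$-projectives, $\scE$-injectives and $\scE$-phantoms recorded in \Cref{prop:basic-proper}. All four statements will boil down to rewriting a hom group using adjunction and observing that the two defining conditions on~$\scE$, closure under $(-)\otimes W$ and closure under $\hom(W,-)$, are \emph{adjoint} conditions to the closure of $\proj(\scE)$ under $(-)\otimes W$ and of $\inj(\scE)$ under $\hom(W,-)$.

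For the forward direction of the first claim, fix $P\in\proj(\scE)$ and $W\in\cat{K}$. For any triangle $X\to Y\to Z\to\Sigma X$ in $\scE$, $\hom$-closedness gives that $\hom(W,X)\to\hom(W,Y)\to\hom(W,Z)\to\Sigma\hom(W,X)$ is again in $\scE$, and so $\cat{K}(P,-)$ sends it to a short exact sequence; tensor-hom adjunction then identifies this with the sequence $\cat{K}(P\otimes W,-)$ applied to the original triangle, proving $P\otimes W\in\proj(\scE)$. The statement about the ideal~$\mcN$ of maps factoring through $\proj(\scE)$ follows immediately: if $f$ factors as $X\to P\to Y$, then $f\otimes g = (\id\otimes g)\circ(f\otimes\id)$ factors through $P\otimes W\in\proj(\scE)$, and ideals absorb composition on either side. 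The dual half, that tensor-closedness of~$\scE$ forces $\hom(W,I)\in\inj(\scE)$ for every $I\in\inj(\scE)$, is proved verbatim with the roles of $(-)\otimes W$ and $\hom(W,-)$ swapped, using that the triangle $X\otimes W\to Y\otimes W\to Z\otimes W\to\Sigma X\otimes W$ lies in~$\scE$; the resulting $\hom$-closure of $\mcN'$ is again formal.

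For the converses, assume $\scE$ has enough projectives and that $\proj(\scE)$ is $\otimes$-closed; we must show that $\hom(W,-)$ preserves $\scE$-triangles, equivalently (cf.\ \Cref{rem:saturated}) that $\hom(W,h)$ is an $\scE$-phantom whenever $h$ is. By \Cref{prop:basic-proper}\,(e), it suffices to check that $\cat{K}(P,\hom(W,h))=0$ for every $P\in\proj(\scE)$. Adjunction identifies this group with $\cat{K}(P\otimes W,h)$; but $P\otimes W\in\proj(\scE)$ by hypothesis and $h$ is an $\scE$-phantom, so \Cref{prop:basic-proper}\,(b) makes it vanish. The injective converse is entirely dual: given enough $\scE$-injectives and $\hom$-closedness of $\inj(\scE)$, one tests whether $h\otimes W$ is an $\scE$-phantom against $\inj(\scE)$ via \Cref{prop:basic-proper}\,(e), and reduces by adjunction to testing $h$ against $\hom(W,I)\in\inj(\scE)$, which vanishes.

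The only mildly subtle point is the last one: the characterisations \Cref{prop:basic-proper}\,(e) detecting $\scE$-phantoms via $\proj(\scE)$ or $\inj(\scE)$ genuinely require the existence of enough projectives or injectives, which is precisely why the ``moreover'' clause carries that hypothesis; everything else in the argument is pure adjunction yoga.
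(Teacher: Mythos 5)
Your proof is correct and takes exactly the route the paper intends: the paper's proof is the one-line remark that the lemma ``is an immediate consequence of the tensor-hom adjunction,'' and your argument simply spells out that adjunction yoga, including the use of \Cref{prop:basic-proper} for the converses under the enough-projectives/injectives hypothesis. Nothing is missing; you have just made the paper's implicit argument explicit.
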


\begin{proof}
This is an immediate consequence of the tensor-hom adjunction.
\end{proof}

In the situation of \Cref{lem:ttproj}, provided $\scE$ is Frobenius, the stable category $\SKE$ is again symmetric monoidal as is the quotient functor $\pi$; this is a general fact about additive quotients by tensor-closed ideals of morphisms. Moreover, rigid objects of $\cat{K}$ get sent to rigid objects in $\SKE$; this is a consequence of the fact that $\pi$ is monoidal (\Cref{rem:monoidal-functors-preserve-rigidity}).

\begin{Prop}\label{prop:ttquotient}
If $\scE$ is a Frobenius class of triangles closed under $\hom$ and tensoring then the category $\SKE$, equipped with the induced triangulation, is closed symmetric monoidal in such a way that both hom and tensor are exact in each variable. If, moreover, $\cat{K}$ is rigid then so is~$\SKE$.
\end{Prop}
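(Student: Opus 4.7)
The plan is to descend the closed symmetric monoidal structure of $\cat{K}$ along the additive quotient $\pi\colon \cat{K}\to \SKE$, using \Cref{lem:ttproj} and \Cref{lem:adjoints-descend}. Our hypothesis that $\scE$ is $\hom$-closed and a \ttclass\ yields, via \Cref{lem:ttproj}, that the ideal $\mcN$ of morphisms factoring through $\proj(\scE)=\inj(\scE)$ is simultaneously $\otimes$-closed and $\hom$-closed. Consequently the tensor on $\cat{K}$ descends uniquely to a symmetric monoidal product on $\SKE$ with unit $\pi(\unit)$, making $\pi$ symmetric monoidal; the coherence data (associator, symmetry, unitors) descend trivially since they are images of isomorphisms in $\cat{K}$.

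For each fixed $W\in\cat{K}$, the exact functors $W\otimes(-)$ and $\hom(W,-)$ form an adjoint pair on $\cat{K}$ and are both relative-exact for $\scE$ on source and target\,: the first by the tt-closure and the second by the $\hom$-closure hypothesis. Thus \Cref{lem:adjoints-descend} produces a descended adjoint pair of \emph{exact} functors
\begin{displaymath}
\pi(W)\otimes(-)\,\adj\,\hom(\pi(W),-)\colon \SKE\to \SKE.
\end{displaymath}
This simultaneously exhibits $\SKE$ as closed, shows that $\otimes$ is exact in each variable, and shows that $\hom(W,-)$ is exact. For the remaining exactness of the first (contravariant) slot $\hom(-,V)\colon \SKE\op\to\SKE$, one invokes the standard mechanism of closed symmetric monoidal triangulated categories: given a distinguished triangle $X\to Y\to Z\to \Sigma X$ in $\SKE$ and any test object $T\in\SKE$, the natural adjunction isomorphism $\SKE(T,\hom(-,V))\cong\SKE(T\otimes(-), V)$ combined with exactness of $T\otimes(-)$ produces, via Yoneda, the long exact sequence of Hom groups identifying $\hom(Z,V)\to \hom(Y,V)\to \hom(X,V)$ with a distinguished triangle (compare \cite[Appendix~A]{HoveyPalmieriStrickland97}).

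Finally, if $\cat{K}$ is rigid then every object $X\in\cat{K}$ is rigid with dual $X^\vee$, and by \Cref{rem:monoidal-functors-preserve-rigidity} the monoidal functor $\pi$ sends $X$ to a rigid object $\pi(X)\in\SKE$ with dual $\pi(X^\vee)$. Since $\pi$ is bijective on objects, every object of $\SKE$ is rigid, so $\SKE$ is rigid. The only mildly delicate point in the argument is the contravariant exactness of $\hom$, but once tensor is exact in each variable this is a formal consequence of the closed symmetric monoidal triangulated structure.
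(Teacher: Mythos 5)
Your proof is correct and follows essentially the same route as the paper: descend the monoidal structure using the $\otimes$- and $\hom$-closure of the ideal from \Cref{lem:ttproj}, obtain exactness and closedness by applying \Cref{lem:adjoints-descend} to the adjoint pairs $W\otimes(-)\dashv\hom(W,-)$, and deduce rigidity from \Cref{rem:monoidal-functors-preserve-rigidity}. The only (welcome) difference is that you spell out the contravariant exactness of $\hom(-,V)$ via the standard Yoneda argument, a point the paper's proof leaves implicit, and you extract the tensor-exactness directly from \Cref{lem:adjoints-descend} where the paper invokes \Cref{prop:naturality}.
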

\begin{proof}
It is, as mentioned above, well known that $\SKE$ is again symmetric monoidal and the quotient functor is symmetric monoidal. By \Cref{lem:ttproj} we know $\proj(\scE)$ is closed under tensoring and, by assumption, so is~$\scE$. Thus \Cref{prop:naturality} tells us that the induced tensor product on $\SKE$ is exact in both variables.

As we have noted, by \Cref{lem:ttproj}, $\proj(\scE)$ is closed under tensoring. Since $\scE$ is Frobenius, said lemma also tells us that $\proj(\scE)$ is closed under the internal hom. We can thus apply \Cref{lem:adjoints-descend} to see that the internal hom descends to an internal hom on~$\SKE$. It is immediate that $\pi$ is closed in this situation.

Finally, the statement concerning rigidity follows from \Cref{rem:monoidal-functors-preserve-rigidity}.
\end{proof}

\begin{Rem}
Note that we do not assert that $\SKE$ is quite a tt-category: We do not know that, in $\SKE$, the tensor product satisfies the appropriate Koszul signs with respect to suspension. In certain general situations we can verify this, see Corollary~\ref{cor:Ksigns}.
\end{Rem}

\begin{Cor}\label{cor:ttproj}
If $\cat{K}$ is rigid and $\scE$ is a proper \ttclass\ of triangles, the subcategories of projectives and of injectives, $\proj(\scE)$ and $\inj(\scE)$, are tensor-closed. In that case, $\SKE$ is a rigid tt-category (up to Koszul signs).
\qed
\end{Cor}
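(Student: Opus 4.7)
The plan is essentially to invoke the preceding \Cref{lem:ttproj} and \Cref{prop:ttquotient}, once rigidity of~$\cat{K}$ has been used to collapse the two closure conditions, $\otimes$-closure and $\hom$-closure, into a single one.

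First I would observe that rigidity of~$\cat{K}$ supplies a natural isomorphism $\hom(W,-)\cong W^\vee\otimes -$ for every $W\in\cat{K}$, and that $(-)^\vee$ is an (exact) autoequivalence of~$\cat{K}$, since all objects are reflexive ($X^{\vee\vee}\cong X$) and the dual is exact. As a consequence, any class of morphisms or subcategory of objects in~$\cat{K}$ is $\otimes$-closed if and only if it is $\hom$-closed: tensoring with an arbitrary~$W$ is, up to the autoequivalence $(-)^\vee$, the same as taking~$\hom(W^\vee,-)$. In particular, the hypothesis that $\scE$ be a $\otimes$-class promotes $\scE$ automatically to being $\hom$-closed as well.

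With that observation in hand, the first assertion of the corollary follows immediately from both halves of \Cref{lem:ttproj}: the ``$\hom$-closed $\Rightarrow$ $\proj(\scE)$ is $\otimes$-closed'' half gives tensor-closure of~$\proj(\scE)$, while the ``$\otimes$-closed $\Rightarrow$ $\inj(\scE)$ is $\hom$-closed'' half, combined with the rigid collapse just noted, gives tensor-closure of~$\inj(\scE)$.

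For the ``in that case'' clause I would tacitly assume $\scE$ is Frobenius (so that $\SKE$ is defined and triangulated via \Cref{thm:Beligiannis}) and directly invoke \Cref{prop:ttquotient}: both of its hypotheses, $\otimes$- and $\hom$-closure of~$\scE$, are now in force, so $\SKE$ is closed symmetric monoidal with tensor and internal hom exact in each variable, and rigid as claimed, the latter because the symmetric monoidal quotient functor~$\pi$ sends rigid objects to rigid objects (\Cref{rem:monoidal-functors-preserve-rigidity}) and every object of $\cat{K}$ is rigid. The parenthetical ``up to Koszul signs'' simply records the limitation flagged in the remark immediately preceding the corollary, namely that we do not here verify that suspension and tensor in~$\SKE$ satisfy the correct graded commutativity demanded of a bona fide tt-category; since the corollary explicitly waives this, there is no real obstacle — the main content of the proof is the conceptual observation that rigidity of $\cat{K}$ renders the two closure hypotheses of \Cref{prop:ttquotient} equivalent.
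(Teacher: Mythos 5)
Your proposal is correct and follows exactly the route the paper intends: the corollary is stated with no separate proof precisely because rigidity makes $\hom(W,-)\cong W^\vee\otimes-$, collapsing $\otimes$- and $\hom$-closure, so that \Cref{lem:ttproj} yields tensor-closure of $\proj(\scE)$ and $\inj(\scE)$ and \Cref{prop:ttquotient} (with the tacit Frobenius hypothesis, and \Cref{rem:monoidal-functors-preserve-rigidity} for rigidity of $\SKE$) gives the rest, Koszul signs being deferred to \Cref{cor:Ksigns}. Nothing further is needed.
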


\threestars

Let us now turn to the other connection between the tensor structure of~$\cat{K}$ and proper classes: The production of Frobenius classes of triangles using the tensor. In glorious generality, one can finesse a tensor-version of the Wirthm\"uller conditions of \Cref{thm:Wirthmuller}. Let us instead go straight for the jugular.

\begin{Cor}\label{cor:rigid}
Let $\cat{K}$ be a tt-category and let $B\in \cat{K}$ be a \emph{rigid} object. Let $\scE_B$ denote the class of $(B\otimes-)$-split triangles, that we shall simply call \emph{$B$-split triangles}, \ie the proper class whose ideal of phantoms $\Ph_B=\ker(B\otimes-)$ are the maps killed by tensoring with~$B$. Then $\scE_B$ is a Frobenius \ttclass\ of triangles, whose class of projective-injectives is precisely $\add(B\otimes\cat{K})$. The corresponding quotient
\[
\SKB=\cat{K}/\add(B\otimes\cat{K})
\]
is a tt-category, and the quotient functor $\pi_B\colon \cat{K}\to \SKB$ is monoidal.
\end{Cor}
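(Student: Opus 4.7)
The plan is to recognise this as an especially clean instance of the Wirthm\"uller situation of \Cref{thm:Wirthmuller}, and then check the tensor-compatibility by hand. Rigidity of $B$ furnishes the adjunction $B\otimes-\dashv B^\vee\otimes-$ of \Cref{def:rigid}; dualising it via $B^{\vee\vee}\cong B$ yields the further adjunction $B^\vee\otimes-\dashv B\otimes-$. Hence the exact endofunctor $F=B\otimes-\colon\cK\to\cK$ has both a left and a right adjoint, each realised by the \emph{same} functor $V=U=B^\vee\otimes-$. This is precisely the hypothesis of \Cref{thm:Wirthmuller} with $W=\id_\cK$, so $\scE_B=F^{-1}(\Esplit)$ is a Frobenius proper class of triangles and \Cref{thm:Beligiannis} equips $\SKB$ with a triangulation.

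To pin down the projective-injective class, \Cref{lem:adjunction} identifies it with $\add(\Img(B^\vee\otimes-))=\add(B^\vee\otimes\cK)$, so it remains to verify $\add(B\otimes\cK)=\add(B^\vee\otimes\cK)$. The triangle identity $(\ev\otimes B)\circ(B\otimes\coev)=\id_B$ exhibits $B$ as a direct summand of $B\otimes B^\vee\otimes B$, so tensoring with any $X\in\cK$ shows that $B\otimes X$ is a summand of $B^\vee\otimes(B\otimes B\otimes X)\in B^\vee\otimes\cK$. The symmetric argument starting from $B^\vee$ gives the opposite inclusion.

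For the tensor-triangular upgrade I would check two closure properties. First, $\scE_B$ is a \ttclass: since $\Ph_{\scE_B}=\ker(B\otimes-)$ and $B\otimes(f\otimes W)=(B\otimes f)\otimes W$, the ideal $\Ph_{\scE_B}$ is $\otimes$-closed. Second, $\add(B\otimes\cK)$ is evidently tensor-closed in $\cK$, since $(B\otimes X)\otimes Y\cong B\otimes(X\otimes Y)$. Tensor-closure of $\proj(\scE_B)$ immediately forces the ideal $\NE$ of morphisms factoring via $\proj(\scE_B)$ to be a two-sided $\otimes$-ideal, so the tensor on $\cK$ descends to $\SKB$ by the universal property of the additive quotient and $\pi_B$ is strong monoidal (strictly so, as it is the identity on objects). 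Exactness in each variable of the induced tensor on $\SKB$ is then \Cref{prop:naturality} applied to $-\otimes W\colon\cK\to\cK$ for every $W$.

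The most delicate point, as flagged in the remark preceding \Cref{cor:ttproj}, is confirming that the Koszul sign convention holds with respect to the twisted suspension $\Sigma_{\scE_B}$ on $\SKB$ rather than the naive $\underline{\Sigma}$. I expect this to fall out of the graded naturality established in \Cref{lem:sigma-comparison}, together with the fact (alluded to in the introduction) that $\Sigma_{\scE_B}$ and $\underline{\Sigma}$ differ on $\SKB$ by tensoring with an invertible object $F_B$, which absorbs any sign discrepancy.
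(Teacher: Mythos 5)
Your proposal is correct and follows essentially the same route as the paper: the Frobenius adjunction $B^\vee\otimes-\dashv B\otimes-\dashv B^\vee\otimes-$ feeds into \Cref{thm:Wirthmuller}, the identification $\add(B\otimes\cK)=\add(B^\vee\otimes\cK)$ comes from the unit--counit relations, the tensor descends because the projective-injectives form a $\otimes$-closed class, and the Koszul-sign compatibility is likewise deferred to the $F_B$-twist description of the suspension (the paper's \Cref{cor:Ksigns}). The only cosmetic difference is that the paper verifies $\hom$-closure of the phantom ideal $\ker(B\otimes-)$ and invokes \Cref{prop:ttquotient}, whereas you check $\otimes$-closure of $\add(B\otimes\cK)$ directly; this suffices for the statement as given, since the paper's notion of tt-category does not demand a closed structure.
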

\begin{proof}
Since $B$ is rigid we have adjoint functors
\begin{displaymath}
\xymatrix{
\cat{K}
 \ar[d]|-{B\otimes -}
\\
\cat{L}
 \ar@<-1.5em>[u]_-{B^\vee\otimes -}
 \ar@<1.5em>[u]^-{B^\vee\otimes -}
}
\end{displaymath}
and so \Cref{thm:Wirthmuller} applies to give the first statement. We use here that the unit-counit relations for the adjunction tell us that $\add(B\otimes\cat{K}) = \add(B^\vee\otimes\cat{K})$ (cf.\ \Cref{prop:rigidfacts}). For the statements regarding the monoidal structure it is enough to note that if $f\in\ker(B\otimes-)$ is a $B$-phantom and $X\in \cat{K}$ is arbitrary then
\begin{displaymath}
B\otimes (X\otimes f) \cong (B\otimes f)\otimes X = 0
\end{displaymath}
and
\begin{displaymath}
B \otimes \hom(X,f) \cong \hom(B^\vee, \hom(X,f)) \cong \hom(X\otimes B^\vee, f) \cong \hom(X, B\otimes f) = 0.
\end{displaymath}
Thus \Cref{prop:ttquotient} applies to give the remaining statements with the exception of the Koszul signs. This additional compatibility condition is satisfied, but we defer the proof to Corollary~\ref{cor:Ksigns}.
\end{proof}

\begin{Exa}
\label{exa:representations}%
Let~$p$ be a prime number and $k$ be a field of characteristic~$p$. Let $G$ be a finite group (we assume that $p$ divides the order of $G$ to avoid dwelling too much on the zero category). Given a finite collection $\mcH$ of subgroups of $G$ one can consider $\smodu_\mcH kG$, the stable category of $G$ relative to $\mcH$ as defined in~\cite{CarlsonPengWheeler98}. This is given by working relative to modules induced from subgroups in $\mcH$ or, equivalently, by considering $\modu kG$ with the exact structure where the exact sequences are those that split when restricted to all subgroups in~$\mcH$.

This construction can be phrased in terms of the additive quotients studied here. The sum, suspension, and summand closed subcategory of $\smodu kG$
\begin{displaymath}
\mcC_\mcH = \add(\Ind_H^G M\;\vert\; H\in \mcH, M\in \smodu kH)
\end{displaymath}
is preenveloping and precovering (this is proved in~\cite{CarlsonPengWheeler98}), so the corresponding additive quotient is triangulated. One easily checks that it coincides with $\smodu_\mcH kG$.

More generally, one could take a finite dimensional representation $V$ of $G$ and consider $((\smodu kG)\otimes V)^\natural$, which is again the projective-injectives for a Frobenius class of triangles and obtain a corresponding triangulated additive quotient. The interested reader is referred for instance to~\cite{Lassueur11}.
\end{Exa}

Let us return to our general discussion. We begin with some basic observations.

\begin{Prop}\label{prop:rigidfacts}
Let $B\in \cat{K}$ be rigid.
\begin{enumerate}[\rm(a)]
\item The rigid object $B$ is $\otimes$-faithful (\ie $B\otimes -\colon \cat{K} \to \cat{K}$ is faithful) if and only if $\coev\colon \unit \to B \otimes B^\vee$ is a split monomorphism, if and only if $\SKB = 0$.
\smallbreak
\item The objects $B$ and $B^\vee$ and, more generally, the rigid objects $B^{\otimes m}\otimes (B^\vee)^{\otimes n}$ for all~$m,n >0$ define the same relative category $\SKB$.
\end{enumerate}
\end{Prop}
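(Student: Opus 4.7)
The plan is to prove (a) as a cyclic chain of implications, and (b) by identifying the additive classes $\add(B\otimes\cat{K})$, $\add(B^\vee\otimes\cat{K})$ and $\add((B^{\otimes m}\otimes(B^\vee)^{\otimes n})\otimes\cat{K})$. Since the proper class $\scE_B$ is determined by its class of projective-injectives, or equivalently by the ideal of phantoms $\ker(B\otimes-)$ (see \Cref{rem:saturated} and \Cref{cor:rigid}), equality of these additive classes automatically forces equality of the relative categories together with their triangulations.

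For (a), I would run (iii)$\Rightarrow$(i)$\Rightarrow$(ii)$\Rightarrow$(iii). For (iii)$\Rightarrow$(i): $\SKB=0$ puts $\unit$ in $\add(B\otimes\cat{K})$, so $\id_\unit=\pi_0\iota$ for some $\iota\colon\unit\to B\otimes X$ and $\pi_0\colon B\otimes X\to\unit$. The interchange law then presents any $f\colon Y\to Z$ as $(Z\otimes\pi_0)\circ(f\otimes B\otimes X)\circ(Y\otimes\iota)$, and the middle map is braiding-conjugate to $X\otimes(B\otimes f)$; hence $B\otimes f=0$ forces $f=0$. For (i)$\Rightarrow$(ii): the triangle identity $(B\otimes\ev_{B^\vee})(\coev\otimes B)=\id_B$ of the adjunction $B^\vee\otimes-\dashv B\otimes-$ exhibits $\coev\otimes B\colon B\to B\otimes B^\vee\otimes B$ as a split monomorphism. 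Consequently any $g\colon Z\to\unit$ with $\coev\circ g=0$ satisfies $(\coev\otimes B)\circ(g\otimes B)=0$, so $g\otimes B=0$ and thus $g=0$ by faithfulness of $B\otimes-$. Hence $\coev$ is a monomorphism in the triangulated category $\cat{K}$; completing it to a distinguished triangle and rotating, the connecting map is killed by post-composition with the monic $\coev$, so it vanishes and the triangle splits. Finally (ii)$\Rightarrow$(iii): a split $\coev$ exhibits $\unit$ as a summand of $B\otimes B^\vee\in\proj(\scE_B)=\add(B\otimes\cat{K})$, and the $\otimes$-closure of $\proj(\scE_B)$ from \Cref{cor:ttproj} then yields $\proj(\scE_B)=\cat{K}$, i.e., $\SKB=0$.

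For (b), the crux is the dual triangle identity $(\ev_B\otimes B)(B\otimes\coev_B)=\id_B$ coming from the adjunction $B\otimes-\dashv B^\vee\otimes-$, which displays $B$ as a direct summand of $B\otimes B^\vee\otimes B\cong B^{\otimes 2}\otimes B^\vee$. Iterating (replacing the inner copy of $B$ on the right by the same inclusion) gives $B$ as a summand of $B^{\otimes(p+1)}\otimes(B^\vee)^{\otimes p}$ for every $p\geq 0$. Given $C=B^{\otimes m}\otimes(B^\vee)^{\otimes n}$ with $m,n\geq 1$, taking $p=\max(m-1,n)$ and tensoring with $W=B^{\otimes(p+1-m)}\otimes(B^\vee)^{\otimes(p-n)}$ realizes $B$ as a summand of $C\otimes W$; thus for every $X$, $B\otimes X$ is a summand of $C\otimes(W\otimes X)\in\add(C\otimes\cat{K})$, giving $\add(B\otimes\cat{K})\subseteq\add(C\otimes\cat{K})$. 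The reverse inclusion is immediate since $m\geq 1$ lets one factor a copy of $B$ out of~$C$. The separately-mentioned candidate $B^\vee$ is handled by applying the same arguments with the roles of $B$ and $B^\vee$ swapped (using $(B^\vee)^\vee=B$). I expect the main bookkeeping obstacle to be tensor associativity and symmetry when reordering factors, but no genuinely new idea is needed beyond the triangle identities already exploited in the proofs of \Cref{cor:rigid} and \Cref{cor:ttproj}.
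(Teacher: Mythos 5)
Your argument is correct and follows essentially the same route as the paper's (sketched) proof: part (a) rests on the unit-counit identities together with the fact that monomorphisms in a triangulated category are split, and part (b) on the unit-counit relations identifying $\add(B\otimes\cat{K})$ with $\add(B^\vee\otimes\cat{K})$ and with $\add\big((B^{\otimes m}\otimes(B^\vee)^{\otimes n})\otimes\cat{K}\big)$. The one glossed point\textemdash{}that equal classes of projective-injectives force equal proper classes and hence identical triangulations\textemdash{}is harmless here: if the add-classes agree then the phantom ideals $\ker(B\otimes-)$ and $\ker(C\otimes-)$ agree by a retract argument (or by \Cref{prop:basic-proper}, since there are enough projectives), and the phantom ideal determines the proper class by \Cref{rem:saturated}.
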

\begin{proof}
Part~(a) is an easy exercise using that the only monomorphisms in a triangulated category are the split ones. Part~(b) follows from the standard unit-counit relations that guarantee $B\in \add(B^\vee\otimes\cat{K})$, etc.
\end{proof}

\begin{Rem}\label{rem:noBS2}
If $B$ is only \emph{nil-faithful} (\aka solid), meaning that every $f\in\ker(B)$ is $\otimes$-nilpotent, which is also equivalent to saying $\supp B = \Spc \cat{K}$, then $\SKB$ is not necessarily trivial. In other words, even when $\thick^\otimes(B)=\cat{K}$, if $\add(B\otimes\cat{K})\neq \cat{K}$ then $\SKB\neq 0$ is a non-trivial tt-category.
\end{Rem}

\begin{Exa}
In the representation-theoretic setting of \Cref{exa:representations}, consider a collection $H_1,\ldots, H_n\le G$ of subgroups and the object $B=\oplus_{i=1}^n\Ind_{H_i}^G k$ in $\cat{K}=\smodu(kG)$. Then $B$ is $\otimes$-faithful if only if one of the indices $[G:H_i]$ is prime to~$p$, whereas the property $\supp(B)=\SpcK$ is much more flexible: It suffices that every elementary abelian subgroup of~$G$ is conjugate to a subgroup of one the~$H_i$.
\end{Exa}

We can specialize much of what we have seen in the general triangular setting to the tt-setting. For instance, we have the obvious analogue of naturality:

\begin{Cor}\label{cor:ttnaturality}
Given a tt-functor $F\colon \cat{K} \to \cat{K}'$ and $B\in \cat{K}$ rigid then there is a unique tt-functor $\underline{F}$ making the following square commute
\begin{displaymath}
\xymatrix{
\cat{K}\ar[r]^-F \ar[d]_-\pi & \cat{K}' \ar[d]^-{\pi'} \\
\SKB \ar[r]_-{\underline{F}} & \STAB{\cat{K}'}{F(B)}\,.\kern-1em
}
\end{displaymath}
\end{Cor}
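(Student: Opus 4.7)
The plan is to invoke \Cref{prop:naturality} in the tt-setting, taking $\scE = \scE_B$ in $\cat{K}$ and $\scE' = \scE_{F(B)}$ in $\cat{K}'$, and then upgrade the resulting exact functor to a tt-functor using the monoidality of $F$.

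First I would note that $F(B)$ is rigid in $\cat{K}'$ by \Cref{rem:monoidal-functors-preserve-rigidity}, so by \Cref{cor:rigid} both $\scE_B$ and $\scE_{F(B)}$ are Frobenius $\otimes$-classes of triangles, and both quotients $\SKB$ and $\STAB{\cat{K}'}{F(B)}$ carry tt-structures (up to the Koszul sign deferred to \Cref{cor:Ksigns}). Next I would verify the two hypotheses of \Cref{prop:naturality}. For relative-exactness, any $h\in\Ph_{\scE_B}$ satisfies $B\otimes h=0$, hence $F(B)\otimes F(h)\cong F(B\otimes h)=0$, so $F(h)\in\Ph_{\scE_{F(B)}}$ and consequently $F(\scE_B)\subseteq\scE_{F(B)}$. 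For preservation of projective-injectives, recall from \Cref{cor:rigid} that $\proj(\scE_B)=\add(B\otimes\cat{K})$ and likewise $\proj(\scE_{F(B)})=\add(F(B)\otimes\cat{K}')$; then for any $X\in\cat{K}$, $F(B\otimes X)\cong F(B)\otimes F(X)\in\add(F(B)\otimes\cat{K}')$, and $\add$-closure does the rest.

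With both hypotheses in place, \Cref{prop:naturality} provides a unique exact functor $\underline F\colon\SKB\to\STAB{\cat{K}'}{F(B)}$ with $\underline F\circ\pi=\pi'\circ F$. It only remains to check that $\underline F$ is monoidal. This is a formal consequence of the construction of the tensor structure on the stable quotients in \Cref{prop:ttquotient}: the tensor on $\SKB$ is defined so that $\pi$ is strong monoidal (and similarly for $\pi'$), so the composite $\pi'\circ F$ inherits monoidal structure from $F$, and by the universal property of the additive quotient this structure descends uniquely to $\underline F$. Uniqueness of $\underline F$ as a tt-functor is then immediate since $\pi$ is the identity on objects and essentially surjective on morphism classes.

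There is no substantial obstacle: the main content is just bookkeeping—recognizing that rigidity of $B$ and monoidality of $F$ together force exactly the two conditions needed to apply \Cref{prop:naturality}, and that the monoidal data passes through the quotient by the same universal-property argument that gave the tensor on $\SKB$ in the first place.
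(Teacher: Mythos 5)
Your proposal is correct and follows essentially the same route as the paper: check conditions (1) and (2) of \Cref{prop:naturality} using monoidality of $F$ (a $B$-phantom $h$ gives $F(B)\otimes F(h)\cong F(B\otimes h)=0$, and $F(\add(B\otimes\cat{K}))\subseteq\add(F(B)\otimes\cat{K}')$), then let the monoidal structure descend via the universal property of the monoidal additive quotients. Nothing to add.
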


\begin{proof}
It is enough to check conditions~\eqref{it:naturality-1} and~\eqref{it:naturality-2} of \Cref{prop:naturality} as the preservation of the monoidal structure follows from the universal property of the monoidal additive quotients. Let $B'=F(B)$ and let $\scE_B$ and $\scE_{B'}$ denote the Frobenius classes of triangles arising from $B$ and $B'$ in~$\cat{K}$ and $\cat{K}'$ respectively. If $\delta$ is a $B$-phantom, \ie $B\otimes \delta=0$, then since $F$ is monoidal we have $B'\otimes F(\delta) \cong F(B)\otimes F(\delta) \cong F(B\otimes \delta) \cong 0$. Thus $F(\scE_B)\subseteq \scE_{B'}$, showing~\eqref{it:naturality-1}, by exactness of~$F$. Similarly, if $Y$ is a summand of $X\otimes B$ then $F(Y)$ is a summand of $F(B\otimes X)= B'\otimes F(X)$ so $F$ sends $\add(B\otimes\cat{K})$ to $\add(B'\otimes\cat{K}')$ showing~\eqref{it:naturality-2} holds.
\end{proof}

\begin{Rem}
One might hope that \Cref{cor:ttnaturality} would provide a method for producing equivalences of relative stable categories, or facilitating their computation by finding a good $F\colon \cat{K}\to \cat{K}'$. However, asking that $\underline{F}$ be conservative is already a lot to hope for if $F$ is not already conservative itself. Indeed, if $\Ker \underline{F} = 0$ then $\Ker F \subseteq \add(B\otimes\cat{K})$. In particular, if $\Ker \underline{F} = 0$ but $\Ker F\neq 0$ then $\add(B\otimes\cat{K})$ contains a non-zero tt-ideal. This property reminds us of \Cref{thm:3rdIT-birationally} and offers a natural transition to the next section.
\end{Rem}

\goodbreak
\section{Thick tensor-ideals and birationality}
\label{se:birationality}%
\medbreak

We now concentrate on what we can say concerning thick $\otimes$-ideals in the case that the relative quotient is a tt-category with the induced tt-structure. We focus on the setup of \Cref{cor:rigid} \emph{with the additional assumption that $\cat{K}$ is rigid}: So $\cat{K}$ is a rigid tt-category, $B\in \cat{K}$ is an object, which gives rise to the Frobenius class of $B$-split triangles $\scE_B$ with $\proj(\scE_B) = \add(B\otimes\cat{K})$, and the corresponding stable category $\SKB=\cat{K}/\add(B\otimes\cat{K})$ is again a rigid tt-category. We denote by $\pi\colon\cat{K}\to \SKB$ the (non-exact) monoidal quotient functor.

\begin{Rem}
\label{rem:K(U)}%
In this situation the thick subcategory of \Cref{cor:generalBS}
\begin{displaymath}
\cat M = \thick_{\cat{K}}(\proj(\scE_B)) = \thick_{\cat{K}}(B\otimes\cat{K}) = \thick^\otimes_{\cat{K}}(B)
\end{displaymath}
is automatically a $\otimes$-ideal. Thus the corresponding equivalence between the quotients can be idempotent completed to an equivalence
\begin{displaymath}
\cat{K}(U) = (\cat{K}/\cat{K}_Z)^\natural \cong (\SKB / \pi \cat M)^\natural,
\end{displaymath}
where $\natural$ denotes idempotent completion, and $\cat{K}(U)$ is the category associated to the open $U = \Spc \cat{K}\setminus Z$ for $Z= \supp_{\cat{K}} B$.

This helps to make precise our comments about birationality in \Cref{rem:birational}: $\SKB$ admits a Verdier quotient equivalent to the part of $\cat{K}$ supported on~$U$. We want to show that the righthand category $(\SKB / \pi \cat M)^\natural$ is also $\SKB(V)$ for a natural quasi-compact open~$V\subseteq\Spc(\SKB)$.
\end{Rem}

Now let us consider a construction which exploits the tensor structure in a way which is different to what we have seen so far and that will help us determine the open~$V$ of \Cref{rem:K(U)}.
\begin{Cons}
\label{cons:F_B}%
Ponder the distinguished triangle on the coevaluation morphism
\begin{equation}
\label{eq:F1B}%
\xymatrix{
F_B \ar[r]^-{\xi_B} & \unit \ar[r]^-{\coev} & B\otimes B^\vee \ar[r] & \Sigma F_B
}
\end{equation}
in~$\cat{K}$. By the unit-counit relations for an adjunction we know $\coev\otimes B$ is a split monomorphism, and so its fibre $\xi_B$ is $B$-phantom:
\begin{equation}
\label{eq:B@xi_B=0}%
\xi_B\otimes B=0\,.
\end{equation}
We already get a lot of mileage out of the morphism~$\xi_B$. This map controls the triangulated structure on~$\SKB$; in some sense one can view $\SKB$ as being $\cat{K}$ `twisted by~$\xi_B$'. This is made precise in the next theorem.
\end{Cons}

Recall from Example~\ref{exa:auto} that we denote by $\underline{\Sigma}$ the autoequivalence of $\SKB$ induced by $\Sigma$, the suspension of $\cat{K}$, which is naturally isomorphic to $\underline{\Sigma}\unit \otimes (-)$. We will also need to concept of $(\otimes$-)invertibility: an object $X\in \cat{K}$ is \emph{invertible} if there exists an $X^{-1}$ such that $X\otimes X^{-1} \cong \unit$. We note that if $X$ is invertible then necessarily $X^{-1} = X^\vee$.

\begin{Thm}
\label{thm:triangulation-SKB}%
With the above notation, we have:
\begin{enumerate}[\rm(a)]
\item
\label{it:FB-invertible}%
The object $\pi(F_B)$ is invertible in $\SKB$.
\smallbreak
\item
\label{it:suspension-SKB}%
There is a natural isomorphism
\begin{displaymath}
\Sigma_B(-) \cong \pi(F_B)\otimes \underline{\Sigma}(-)
\end{displaymath}
between the suspension in~$\SKB$ and the functor induced by suspension (\Cref{exa:auto}) twisted by this invertible. Moreover, under this isomorphism, $\pi(\xi_B)\otimes \underline{\Sigma}\colon \Sigma_B\to \underline{\Sigma}$ is the comparison morphism~$\alpha$ of \Cref{cons:sigma-comparison}.
\smallbreak
\item
\label{it:ucone}%
Given a morphism $f\colon X\to Y$ in~$\cat{K}$, the cone on its image $\pi f$ in~$\SKB$ can be obtained as $\pi\big(\cone(\xi_B \otimes f))$. In fact the whole triangle over~$\pi(f)$ can be obtained from the diagram
\begin{displaymath}
\xymatrix{
& X \ar[d]_-f & & \\
F_B\otimes X \ar[r]_-{\xi_B\otimes f} & Y \ar[r]^-g & Z \ar[r]^-h & \Sigma(F_B\otimes X),
}
\end{displaymath}
where the row is a distinguished triangle in $\cat{K}$, by applying $\pi$ to the triple $(f,g,h)$ and using the identification ${\pi(F_B\otimes \Sigma)}\cong \Sigma_B$ of part~\eqref{it:FB-invertible}.
\end{enumerate}
\end{Thm}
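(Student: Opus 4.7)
The plan is to produce, for each $X\in\cat{K}$, a canonical $\inj(\scE_B)$-preenvelope coming directly from rigidity, which would make Beligiannis's abstract suspension and cone constructions (Construction~\ref{cons:stable-triang}, Example~\ref{ex:st-triangle-E}) transparent. My first step is to tensor the defining triangle~\eqref{eq:F1B} of $F_B$ with $X$, obtaining a distinguished triangle
\[
F_B\otimes X \xto{\xi_B\otimes X} X \xto{\coev\otimes X} B\otimes B^\vee\otimes X \too \Sigma F_B\otimes X
\]
in~$\cat{K}$. I would observe that $\coev\otimes X$ is a $B$-monomorphism (since $B\otimes\coev$ is split monic by the unit-counit relations) and that its target lies in $\add(B\otimes\cat{K})=\inj(\scE_B)$, so that $\coev\otimes X$ provides the desired $\inj(\scE_B)$-preenvelope, functorially in~$X$.

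Part~\eqref{it:suspension-SKB} would then follow directly from Construction~\ref{cons:stable-triang}: $\Sigma_B\pi(X)$ is $\pi$ of the cofibre of any $\inj(\scE_B)$-preenvelope, and for our choice this cofibre is $\Sigma F_B\otimes X$. Using monoidality of~$\pi$ and the definition of~$\underline{\Sigma}$, I would obtain the natural isomorphism
\[
\Sigma_B\pi(X)\cong\pi(\Sigma F_B\otimes X)\cong\pi(F_B)\otimes\underline{\Sigma}\,\pi(X).
\]
Part~\eqref{it:FB-invertible} is then automatic: $\Sigma_B$ is an autoequivalence of $\SKB$, $\underline{\Sigma}$ is an autoequivalence by Corollary~\ref{cor:auto}, hence $\pi(F_B)\otimes -$ is an autoequivalence, forcing $\pi(F_B)$ to be $\otimes$-invertible. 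To identify the comparison $\underline{\alpha}$ of Construction~\ref{cons:sigma-comparison}, I would read off that $\underline{\alpha}_{\pi X}$ is $\pi$ of the third morphism in the rotated preenvelope triangle, namely (up to sign) $\Sigma\xi_B\otimes X$, which under the identification above is precisely $\pi(\xi_B)\otimes\underline{\Sigma}$.

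For part~\eqref{it:ucone}, given $f\colon X\to Y$, the plan is to complete $\xi_B\otimes f = f\circ(\xi_B\otimes X)$ to a distinguished triangle
\[
F_B\otimes X\xto{\xi_B\otimes f} Y\xto{g} Z\xto{h}\Sigma(F_B\otimes X)
\]
in~$\cat{K}$. The crucial input is that~\eqref{eq:B@xi_B=0} implies $(\xi_B\otimes f)\otimes B=0$, so this triangle is $B$-split and lies in $\scE_B$. Its image-triangle (Theorem~\ref{thm:Beligiannis} via Example~\ref{ex:st-triangle-E}) is a distinguished triangle $\pi(F_B\otimes X)\xto{\pi(\xi_B\otimes f)}\pi Y\xto{\pi g}\pi Z\to\Sigma_B\pi(F_B\otimes X)$ in~$\SKB$. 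Since $\pi$ is monoidal, $\pi(\xi_B\otimes f)=\pi(f)\circ(\pi(\xi_B)\otimes\pi(X))$; and $\pi(\xi_B)\colon\pi(F_B)\to\unit$ is itself an isomorphism, because its image-triangle has third term $\pi(B\otimes B^\vee)\cong 0$ in~$\SKB$, forcing the first map to be an iso. Hence $\pi(\xi_B)\otimes\pi(X)$ identifies the image-triangle above with the distinguished triangle over~$\pi(f)$, exhibiting $\pi(Z)$ as the cone of $\pi(f)$.

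The main obstacle will be the sign and naturality bookkeeping required for the identification of $\underline{\alpha}$, which must be reconciled with the graded naturality of Lemma~\ref{lem:sigma-comparison} and the exchange morphism~$t$ for $\underline{\Sigma}$. Aside from this bookkeeping, the proof is essentially formal once the canonical preenvelope provided by rigidity is in place: each part reduces to direct computation using unit-counit relations and the defining identity $\xi_B\otimes B=0$.
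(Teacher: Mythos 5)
Your treatment of parts (a) and (b) is essentially the paper's argument: $\coev\otimes X$ is a functorial $\inj(\scE_B)$-preenvelope, its cofibre is $\Sigma(F_B\otimes X)$, and monoidality of $\pi$ gives $\Sigma_B\cong\pi(F_B)\otimes\underline{\Sigma}$, whence invertibility of $\pi(F_B)$; the sign bookkeeping for $\underline{\alpha}$ is at the same level of detail as the paper. The problem is part (c). You claim that $(\xi_B\otimes f)\otimes B=0$ makes the triangle on $\xi_B\otimes f$ lie in $\scE_B$. That is not what $B$-split means: by \Cref{def:F-triangle} a triangle is $B$-split when its \emph{third} map is a $B$-phantom, equivalently when its \emph{first} map becomes a \emph{split monomorphism} after tensoring with $B$ --- becoming \emph{zero} is the opposite extreme. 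Tensoring your triangle with $B$ kills the first map, so $B\otimes Z\simeq (B\otimes Y)\oplus\Sigma(B\otimes F_B\otimes X)$ and the third map $B\otimes h$ is a split epimorphism onto $\Sigma(B\otimes F_B\otimes X)$, which is nonzero unless $B\otimes F_B\otimes X=0$. So \Cref{thm:Beligiannis} does not apply to this triangle, and its ``image-triangle'' is not available to you.

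The same confusion underlies the claim that $\pi(\xi_B)\colon\pi(F_B)\to\unit$ is an isomorphism: the triangle having $\xi_B$ as first map is not in $\scE_B$; it is the \emph{rotated} triangle starting with $\coev$ that is, and its image-triangle gives $\pi(\Sigma F_B)\cong\Sigma_B\unit$, not $\pi(F_B)\cong\unit$. Indeed, if $\pi(\xi_B)$ were invertible then by your own part (b) one would get $\Sigma_B\cong\underline{\Sigma}$, contradicting the paper's explicit warning that $\Sigma_B\unit\not\simeq\underline{\Sigma}\unit$ in general; moreover the cone of $\pi(\xi_B)$ is $\pi(C)$, so its vanishing would force $V=\Spc(\SKB)$ and trivialize \Cref{thm:birationality} (false already for relative stable module categories). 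The correct route, and the one the paper takes, avoids both claims: apply the octahedral axiom to $\xi_B\otimes f=f\circ(\xi_B\otimes X)$ to see that the square with vertices $X$, $B\otimes B^\vee\otimes X$, $Y$, $Z$ is homotopy bicartesian; then $Z$ is exactly the homotopy pushout of the preenvelope $\coev_X$ along (a lift of) $f$ prescribed by \Cref{cons:stable-triang}, which directly exhibits $\pi X\xto{\pi f}\pi Y\xto{\pi g}\pi Z\xto{\pi h}\Sigma_B\pi X$ as the distinguished triangle over $\pi(f)$, with no need to invert $\pi(\xi_B)$.
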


\begin{proof}
The map $\xi_B$ is a $B$-phantom and so $\coev$ is a $\inj(\scE_B)$-preenvelope. It follows that in $\SKB$ we have
\begin{displaymath}
\Sigma_B\unit \cong \pi(\Sigma F_B).
\end{displaymath}
This generalizes to any object as $\coev$ gives functorial $\inj(\scE_B)$-preenvelopes by Lemma~\ref{lem:adjunction}, proving the second part of the statement.
Let us emphasize that $\Sigma_B \unit$ is generally \emph{not} $\underline{\Sigma}\unit=\pi(\Sigma\unit)$, although both are $\otimes$-invertible in~$\SKB$: The former because it is the suspension of the unit, the latter because $\pi$ is monoidal.

To show $\pi(F_B)$ is invertible it's enough to note that desuspending the triangle on $\coev$ gives an isomorphism
\begin{displaymath}
\pi(F_B) \cong \Sigma_B\underline{\Sigma}^{-1}\unit
\end{displaymath}
and then use that $\pi$ and suspension preserve invertibility.

It remains to prove (c). By what we have already seen, we may compute the cone of $\pi(f)\colon \pi X \to \pi Y$ via the central homotopy bicartesian square in the following map of triangles
\begin{displaymath}
\xymatrix{
F_B\otimes X \ar@{=}[d] \ar[rr]^-{\xi_B\otimes X} && X \ar[d]_-f \ar[rr]^-{\coev_X} && B\otimes B^\vee \otimes X \ar[r] \ar[d] & \Sigma(F_B\otimes X) \ar@{=}[d] \\
F_B\otimes X \ar[rr]_-{\xi_B\otimes f} && Y \ar[rr]^-g && Z \ar[r]^-h & \Sigma(F_B\otimes X) \\
}
\end{displaymath}
where the fibres agree by the octahedral axiom. It follows that $\pi(Y \otoo{g} Z) = \pi\big(\cone(\xi_B \otimes f))$ is the cone of $\pi(f)$ in $\SKB$.
\end{proof}

\begin{Cor}\label{cor:Ksigns}
Let $\cat{K}$ be a rigid tt-category and $B\in \cat{K}$ an object. Then the tensor product and suspension are related by the usual Koszul signs in $\SKB$.
\end{Cor}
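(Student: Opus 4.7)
The plan is to exploit the identification $\Sigma_B(-) \cong u \otimes (-)$ arising from \Cref{thm:triangulation-SKB}(b): since $\pi$ is symmetric monoidal, we have $\Sigma_B X = \pi(F_B) \otimes \underline{\Sigma}X = \pi(F_B \otimes \Sigma X) = \pi(\Sigma F_B \otimes X) = u \otimes X$, naturally in $X$, where $u := \pi(\Sigma F_B) = \Sigma_B\unit$ is invertible by \Cref{thm:triangulation-SKB}(a). Under this identification, the structure isomorphisms $a^B_{X,Y}\colon \Sigma_B X \otimes Y \to \Sigma_B(X \otimes Y)$ and $b^B_{X,Y}\colon X \otimes \Sigma_B Y \to \Sigma_B(X \otimes Y)$ become (via the associator and the braiding) the identity and $\tau_{X, u} \otimes \id_Y$, respectively.

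Unwinding the Koszul square with these explicit structure morphisms, the two canonical composites $\Sigma_B X \otimes \Sigma_B Y \rightrightarrows \Sigma_B^2(X \otimes Y)$ are readily computed to differ precisely by composition with $\tau_{u, u} \otimes \id_{X \otimes Y}$. Thus the Koszul sign condition for $\SKB$ reduces to the single identity
\[
\tau_{u, u} = -\id_{u \otimes u} \quad \text{in } \SKB.
\]

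Since $\pi$ preserves the symmetric monoidal structure, $\tau_{u, u} = \pi(\tau_{\Sigma F_B, \Sigma F_B})$. Using the canonical identification $\Sigma F_B \cong \Sigma \unit \otimes F_B$ in the tt-category $\cat{K}$ together with the standard shuffle identity, valid in any symmetric monoidal category, that $\tau_{A \otimes C, A \otimes C}$ is conjugate via the shuffle isomorphism to $\tau_{A, A} \otimes \tau_{C, C}$, the braiding $\tau_{\Sigma F_B, \Sigma F_B}$ is conjugate to $\tau_{\Sigma \unit, \Sigma \unit} \otimes \tau_{F_B, F_B} = -\tau_{F_B, F_B}$, invoking the Koszul relation $\tau_{\Sigma \unit, \Sigma \unit} = -\id$ that holds in $\cat{K}$. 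Hence the problem reduces to showing $\pi(\tau_{F_B, F_B}) = \id$ in $\SKB$.

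The last step is the main obstacle. I would approach it by leveraging the defining triangle $F_B \xrightarrow{\xi_B} \unit \xrightarrow{\coev} B \otimes B^\vee$: naturality of the braiding combined with $\tau_{\unit, \unit} = \id$ forces $(\xi_B \otimes \xi_B) \circ (\tau_{F_B, F_B} - \id) = 0$, so $\tau_{F_B, F_B} - \id$ factors through $\Sigma^{-1}\cone(\xi_B \otimes \xi_B)$. An octahedral computation applied to $\xi_B \otimes \xi_B = \xi_B \circ (F_B \otimes \xi_B)$ places this cone in a distinguished triangle $F_B \otimes B \otimes B^\vee \to \cone(\xi_B \otimes \xi_B) \to B \otimes B^\vee \to \Sigma(F_B \otimes B \otimes B^\vee)$, with both ends in $\add(B \otimes \cat{K})$. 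The remaining technical work is to verify that the induced factorization can be arranged through $\add(B \otimes \cat{K})$ itself, rather than merely through its thick closure, which I expect to follow by exploiting the rigidity of $B$ together with the identity $B \otimes \xi_B = 0$ from equation~\eqref{eq:B@xi_B=0}, which sharply constrains the relevant hom groups in $\SKB$.
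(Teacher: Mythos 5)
Your reduction is correct, and it is a genuinely different route from the paper's: writing $\Sigma_B \cong u\otimes(-)$ with $u=\Sigma_B\unit\cong\pi(\Sigma F_B)$ and unwinding the canonical structure isomorphisms does show that the Koszul condition for $\SKB$ amounts to $\tau_{u,u}=-\id$, and the factorization $u\cong \pi(\Sigma\unit)\otimes\pi(F_B)$ together with $\tau_{\Sigma\unit,\Sigma\unit}=-\id$ in $\cat{K}$ converts this into the single assertion $\pi(\tau_{F_B,F_B})=\id$ in $\SKB$. By contrast, the paper inherits the sign from $\underline{\Sigma}$ (i.e.\ from $\cat{K}$, via the additive monoidal functor $\pi$) and disposes of the $\pi(F_B)$-bookkeeping by the coherence theorem for symmetric monoidal categories, never evaluating $\tau_{F_B,F_B}$ itself. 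So your approach isolates the content of the corollary in the triviality of the self-symmetry of the invertible object $\pi(F_B)$ --- a useful reformulation, but one that you then have to prove.

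That final step is where there is a genuine gap, not ``remaining technical work''. What your sketch establishes is that $\tau_{F_B,F_B}-\id$ factors through $\Sigma^{-1}\cone(\xi_B\potimes{2})$, an extension of two objects of $\add(B\otimes\cat{K})$ and hence an object of $\thick^\otimes(B)$ --- but not, in general, of $\add(B\otimes\cat{K})$. Vanishing in $\SKB$ means factoring through $\add(B\otimes\cat{K})$ itself, and a map factoring through $\thick^\otimes(B)$ has no reason to die in the additive quotient: if that implication held for arbitrary maps, $\SKB$ would simply be the Verdier quotient $\cat{K}\Verd\thick^\otimes(B)$, which is precisely what \Cref{lem:notexact1} (and the whole point of the paper) excludes outside degenerate situations. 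Moreover, the ingredients you propose to close the gap --- rigidity of $B$ and $\xi_B\otimes B=0$ --- pertain to the phantom ideal $\ker(B\otimes-)$, whereas what you need is membership in the ideal $\NE$ of maps factoring through the projective-injectives; the paper explicitly warns (after \Cref{def:rel-stable}) against conflating these two ideals. If you wish to salvage your route, note that for an invertible object the self-symmetry is multiplication by its categorical dimension, and monoidal functors preserve duals and traces of rigid objects, so your claim becomes $\dim_{\cat{K}}(F_B)\equiv 1$ modulo morphisms $\unit\to\unit$ factoring through $\add(B\otimes\cat{K})$; the obvious way to see this from the triangle defining $F_B$ uses additivity of dimensions along distinguished triangles, which is not available in a bare tt-category --- so genuine extra input (or the paper's coherence argument) is required at exactly this point.
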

\begin{proof}
By Theorem~\ref{thm:triangulation-SKB}(b) there is an isomorphism $\Sigma_B(-) \cong \pi(F_B)\otimes \underline{\Sigma}(-)$. It follows immediately from the fact that $\cat{K}$ is a tt-category that $\underline{\Sigma}$ plays nicely with $\otimes$. Thus it is sufficient to verify that juggling $\pi(F_B)$, via associators and symmetry constraints which we denote by $a$ and $s$ respectively, does not introduce any signs i.e.\ that the following diagram commutes.
\begin{displaymath}
\xymatrix@R=2.1em{
(F_B^{\otimes r}\otimes X) \otimes (F_B^{\otimes s}\otimes Y)  \ar[r]^-s \ar[dd]_-a  & (F_B^{\otimes r}\otimes X) \otimes (Y\otimes F_B^{\otimes s}) \ar[d]^-a \\
 & ((F_B^{\otimes r}\otimes X)\otimes Y)\otimes F_B^{\otimes s} \ar[d]^-s \\
F_B^{\otimes r} \otimes (X\otimes(F_B^{\otimes s} \otimes Y)) \ar[d]_-a & F_B^{\otimes s} \otimes((F_B^{\otimes r}\otimes X)\otimes Y) \ar[d]^-a\\
F_B^{\otimes s} \otimes ((X\otimes F_B^{\otimes s})\otimes Y) \ar[dd]_-s & F_B^{\otimes s} \otimes (F_B^{\otimes r} \otimes(X\otimes Y)) \ar[d]^-a\\
& (F_B^{\otimes s} \otimes F_B^{\otimes r}) \otimes (X\otimes Y) \ar[d]^-s \\
F_B^{\otimes r}\otimes ((F_B^{\otimes s}\otimes X)\otimes Y) \ar[r]_-a & F_B^{\otimes r+s}\otimes(X\otimes Y)
}
\end{displaymath}
By the coherence theorem for symmetric monoidal categories this boils down to checking the two permutations agree, which they do.
\end{proof}

\begin{Thm}
\label{thm:birationality}%
Let $\pi\colon \cat{K}\to \SKB$ be the quotient with respect to~$B$ and consider $\xi_B\colon F_B\to \unit$ the  fibre of the coevaluation map $\eta\colon\unit\to B^\vee\otimes B$ as in \Cref{cons:F_B}. Let $C=\cone(\xi_B\potimes{2})$ in~$\cat{K}$. Let $U=U_{\cat{K}}(B)$ the open complement of~$\supp_{\cat{K}}(B)$ and $V=U_{\SKB}(\pi(C))$ the open complement of $\supp_{\SKB}(\pi(C))$. Then $\pi$ restricts to a $\otimes$-triangulated equivalence
\[
\pi\restr{U}\colon\cat{K}(U)\isoto \SKB(V)\,.
\]
In particular, $\Spc(\pi\restr{U})$ is a homeomorphism between~$V$ and~$U$.
\end{Thm}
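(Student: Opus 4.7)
The plan is to factor the desired equivalence through the Verdier quotient provided by \Cref{cor:generalBS}. Set $\cat{M} := \thick^\otimes_{\cat{K}}(B) = \thick_{\cat{K}}(\proj(\scE_B))$. That corollary yields an exact equivalence $\bpi\colon \cat{K}\Verd\cat{M}\isoto\SKB\Verd\pi\cat{M}$ induced by~$\pi$; since $\cat{M}$ and $\pi\cat{M}$ are $\otimes$-ideals and $\pi$ is monoidal, $\bpi$ is in fact a tt-equivalence, and by \Cref{rem:K(U)} its idempotent completion on the left is $\cat{K}(U)$. The theorem thus reduces to identifying the idempotent completion of $\SKB\Verd\pi\cat{M}$ with $\SKB(V)$; equivalently, to showing that $\pi\cat{M}$ and $\thick^\otimes_{\SKB}(\pi C)$ cut out the same Thomason closed subset of~$\Spc(\SKB)$.

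One inclusion is straightforward. Applying the octahedral axiom to the factorization $\xi_B^{\otimes 2} = \xi_B\circ(\xi_B\otimes F_B)$ produces a distinguished triangle
\[
B\otimes B^\vee\otimes F_B \longrightarrow C \longrightarrow B\otimes B^\vee \longrightarrow \Sigma(B\otimes B^\vee\otimes F_B)
\]
in $\cat{K}$ whose outer terms lie in $\add(B\otimes\cat{K})\subseteq\cat{M}$, forcing $C\in\cat{M}$. Hence $\pi C\in\pi\cat{M}$, and since $\pi\cat{M}$ is a thick $\otimes$-ideal of $\SKB$ ($\pi$ being monoidal and $\cat{M}$ a $\otimes$-ideal), we get $\thick^\otimes_{\SKB}(\pi C)\subseteq\pi\cat{M}$.

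The reverse inclusion is the heart of the argument. Write $q\colon\SKB\to \mcQ := \SKB\Verd\thick^\otimes_{\SKB}(\pi C)$. By \Cref{thm:triangulation-SKB}(c) applied to $f=\xi_B$, the image $\pi C$ is the cone of $\pi\xi_B$ in $\SKB$, so $q\pi\xi_B$ is an isomorphism in $\mcQ$. Via \Cref{thm:triangulation-SKB}(b) and \Cref{lem:sigma-comparison}, this forces the comparison $\underline{\alpha}\colon \Sigma_B\to\underline{\Sigma}$ to become invertible in $\mcQ$, so the two suspensions agree there. I then verify that the composite $q\pi\colon\cat{K}\to\mcQ$ is exact: compatibility with suspensions is the preceding observation, while compatibility with cones follows from \Cref{thm:triangulation-SKB}(c) together with the invertibility of $q\pi(\xi_B\otimes X)=q\pi\xi_B\otimes q\pi X$ in $\mcQ$, which identifies $q\pi\cone_{\cat{K}}(\xi_B\otimes f)$ with $q\pi\cone_{\cat{K}}(f)$ for every $f$ (the connecting octahedral triangle in $\cat{K}$ has outer vertices $B\otimes B^\vee\otimes X$ and its suspension, both killed by $\pi$). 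Being also monoidal, $q\pi$ is then a tt-functor whose object-kernel is a thick $\otimes$-ideal containing $B$ (as $\pi B=0$), hence containing $\cat{M}$. Thus $\pi X\in\ker(q)=\thick^\otimes_{\SKB}(\pi C)$ for every $X\in\cat{M}$, yielding $\pi\cat{M}\subseteq\thick^\otimes_{\SKB}(\pi C)$ and the desired support equality. The equivalence $\bpi$ then descends, after idempotent completion, to the claimed $\otimes$-triangulated equivalence $\cat{K}(U)\isoto\SKB(V)$, and applying $\Spc$ produces the homeomorphism $V\isoto U$. The principal technical hurdle will be the exactness verification for $q\pi$: the cone identification in $\mcQ$ must be extracted carefully from \Cref{thm:triangulation-SKB}(c) and the octahedral axiom, without circularly invoking the exactness one is trying to establish.
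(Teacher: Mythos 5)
Your reduction to the ideal identity $\pi\cat{M}=\thick^\otimes_{\SKB}(\pi C)$ (with $\cat{M}=\thick^\otimes_{\cat{K}}(B)$) via \Cref{cor:generalBS} and \Cref{rem:K(U)} is the same as the paper's, and your octahedral argument giving $C\in\cat{M}$, hence $\thick^\otimes_{\SKB}(\pi C)\subseteq\pi\cat{M}$, is correct. The gap is in the reverse inclusion. Your plan --- prove that $q\pi\colon\cat{K}\to\mcQ=\SKB\Verd\thick^\otimes_{\SKB}(\pi C)$ is exact and then run the kernel argument --- is legitimate in outline; indeed, by \Cref{thm:3rdIT-birationally} and \Cref{rem:birational}, exactness of $q\pi$ is essentially \emph{equivalent} to the inclusion you want, so all the difficulty is concentrated there. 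But the proposal never actually establishes it. The crucial assertion is that $q\pi(v)$ is invertible in $\mcQ$, where $v\colon\cone_{\cat{K}}(\xi_B\otimes f)\to\cone_{\cat{K}}(f)$ is the octahedral comparison map, and the only justification offered is that the outer vertices $B\otimes B^\vee\otimes X$ and its suspension of the connecting triangle are killed by $\pi$. That reasoning is invalid: the connecting triangle lives in $\cat{K}$, and $q\pi$ is not (yet known to be) exact, so killing two of its terms says nothing about $q\pi(v)$. The same reasoning with $q=\id$ would show $\pi(v)$ is invertible in $\SKB$ itself, which is false in general (take $Y=0$ and $X=\unit$: it would force $\pi(\xi_B)$ to be invertible in $\SKB$, i.e.\ $\pi C\simeq 0$). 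Nor is the connecting triangle an $\scE_B$-triangle in general, so \Cref{thm:Beligiannis} gives no distinguished image-triangle either: its third map is $\Sigma(\coev\otimes X)\circ h'$ (with $h'$ the connecting map of the triangle on $f$), which is a $B$-phantom exactly when $h'$ is. So the identification needs a genuinely new input, and extracting it from exactness of $q\pi$ is circular, as you yourself acknowledge without resolving it.

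The paper closes precisely this hole by a different mechanism, avoiding any a priori exactness of $q\pi$: it identifies $\cat{M}$ with the tensor-nilpotence locus of $\xi_B$ via \Cref{lem:nilp}, upgrades nilpotence in $\SKB$ to nilpotence in $\cat{K}$ by a factorization trick (if $\pi(\xi_B\potimes{n}\otimes X)=0$, the factorization through some $B\otimes Z$ together with $\xi_B\otimes B=0$ yields $\xi_B\potimes{n+1}\otimes X=0$), and then applies \Cref{lem:nilp} again in the rigid category $\SKB$ to $\pi(\xi_B)$, whose cone is $\pi C$ by \Cref{thm:triangulation-SKB}(c); this gives $\pi\cat{M}=\thick^\otimes_{\SKB}(\pi C)$, after which \Cref{cor:generalBS} and idempotent completion finish the proof exactly as in your first step. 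To salvage your route you would need a non-circular proof that $q\pi(v)$ becomes invertible in $\mcQ$ --- which, in effect, amounts to proving the inclusion directly, for instance by the nilpotence argument just described.
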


We shall need a general tt-fact:
\begin{Lem}[{\cite[\S\,2]{Balmer10b}}]
\label{lem:nilp}%
Let $f\colon F\to \unit$ be a morphism in a rigid tt-category such that $f\potimes{n}\otimes\cone(f)=0$ for some~$n\ge 1$. Then the tt-ideal where~$f$ is nilpotent $\cat{J}:=\SET{X\in\cat{K}}{f\potimes{n}\otimes X=0\textrm{ for }n\gg0}$ coincides with $\thick^\otimes(\cone(f))$.
\end{Lem}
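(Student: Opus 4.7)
Let $C=\cone(f)$. The plan is to establish both inclusions $\thick^{\otimes}(C)\subseteq\cat{J}$ and $\cat{J}\subseteq\thick^{\otimes}(C)$. The forward inclusion reduces to verifying that $\cat{J}$ is a thick $\otimes$-ideal of $\cat{K}$, since $C\in\cat{J}$ by hypothesis. Closure of $\cat{J}$ under tensoring with arbitrary objects and under direct summands is immediate from the definition. For triangle-closure, suppose $X,Y\in\cat{J}$ and $X\to Y\to Z\to\Sigma X$ is a distinguished triangle; inflating exponents one finds a single $N$ with $f^{\otimes N}\otimes X=0$ and $f^{\otimes N}\otimes Y=0$. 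Naturality of $f^{\otimes N}\otimes(-)\colon F^{\otimes N}\otimes(-)\to\id$ applied to the triangle gives a map of triangles with two vanishing vertical maps, whence a routine triangle chase lifts $f^{\otimes N}\otimes Z\colon F^{\otimes N}\otimes Z\to Z$ along $Y\to Z$ to a map $g\colon F^{\otimes N}\otimes Z\to Y$. A second application of the naturality square (this time to $g$) then yields
\[
f^{\otimes 2N}\otimes Z=(Y\to Z)\circ(f^{\otimes N}\otimes Y)\circ(F^{\otimes N}\otimes g)=0,
\]
so $Z\in\cat{J}$.

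For the reverse inclusion, I would first show by induction on $n$ that $\cone(f^{\otimes n})\in\thick^{\otimes}(C)$. Writing $f^{\otimes n}=f^{\otimes(n-1)}\circ(f\otimes F^{\otimes(n-1)})$ and applying the octahedral axiom produces a distinguished triangle
\[
\cone(f\otimes F^{\otimes(n-1)})\too\cone(f^{\otimes n})\too\cone(f^{\otimes(n-1)}),
\]
whose first term equals $C\otimes F^{\otimes(n-1)}\in\thick^{\otimes}(C)$ and whose third term lies in $\thick^{\otimes}(C)$ by the induction hypothesis, forcing the middle term into $\thick^{\otimes}(C)$ as well. Now if $X\in\cat{J}$ with $f^{\otimes N}\otimes X=0$, the distinguished triangle
\[
F^{\otimes N}\otimes X\xrightarrow{\;0\;}X\too\cone(f^{\otimes N})\otimes X\too\Sigma F^{\otimes N}\otimes X
\]
splits because its first map vanishes, so $X$ is a direct summand of $\cone(f^{\otimes N})\otimes X$, which belongs to $\thick^{\otimes}(C)$ by the induction above combined with the fact that $\thick^{\otimes}(C)$ is a $\otimes$-ideal. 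Hence $X\in\thick^{\otimes}(C)$.

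The principal obstacle is the triangle-closure of $\cat{J}$ in the first step: a class of objects defined by the vanishing of $\varphi\otimes\id$ is not automatically stable under cones, and the crucial point is that when two vertices of a triangle lie in $\cat{J}$ at exponent $N$ the third only lies there at the \emph{doubled} exponent $2N$. Once this doubling trick is in place, the remainder is a standard octahedral induction together with the elementary observation that a distinguished triangle with a zero map splits.
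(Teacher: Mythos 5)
Your proof is correct. The paper itself gives no argument here: it simply cites \cite[Prop.~2.12]{Balmer10b} for the fact that $\cat{J}$ is a thick $\otimes$-ideal (hence contains $\thick^\otimes(\cone f)$ once the hypothesis puts $\cone(f)$ in $\cat{J}$) and \cite[Prop.~2.14]{Balmer10b} for the reverse inclusion. What you have written is, in effect, a self-contained reconstruction of exactly those two propositions, by exactly the standard arguments: the exponent-doubling trick via the naturality of $f^{\otimes N}\otimes(-)$ to get triangle-closure of $\cat{J}$, and the octahedral induction on $\cone(f^{\otimes n})$ together with the splitting of a triangle whose first map is zero for the reverse inclusion. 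Both steps check out (in the ladder argument one only needs the single vanishing $f^{\otimes N}\otimes\Sigma X=0$ to produce the lift $g$, and then $f^{\otimes N}\otimes Y=0$ in the second naturality square, as you use). So this is the same approach as the source the paper delegates to, just written out in full.
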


\begin{proof}
By \cite[Prop.2.12]{Balmer10b}, this $\cat{J}$ is a tt-ideal, hence our assumption gives $\thick^\otimes(\cone(f))\subseteq\cat{J}$. The reverse inclusion follows from \cite[Prop.\,2.14]{Balmer10b}.
\end{proof}

\begin{proof}[Proof of \Cref{thm:birationality}]
We want to apply \Cref{cor:generalBS} to $\cat{M}=\thick(\proj(\scE_B))$ as announced in \Cref{rem:K(U)}, where we already saw that $\cat{M}=\thick^\otimes(B)=\cat{K}_{\supp(B)}$. So, we need to identify $\pi(\cat{M})=\SET{Y\in\SKB}{Y\simeq \pi(X) \textrm{ for }X\in\cat{M}}$. The cone on $f=\xi_B$ is~$B\otimes B^\vee$ by~\eqref{eq:F1B}, and $f\otimes\cone(f)=0$ by~\eqref{eq:B@xi_B=0}, so we may apply \Cref{lem:nilp}. Noting that $B$ and $B\otimes B^\vee$ generate the same tt-ideal of~$\cK$, namely~$\cat{M}$, we learn that~$\cat{M}$ is exactly the locus of nilpotence of~$\xi_B$:
\[
\cat{M}=\SET{X\in\cat{K}}{\xi_B\potimes{n}\otimes X=0\textrm{ for }n\gg0}.
\]
We claim that its image in~$\SKB$ is simply the locus of nilpotence of the image of~$\xi_B$:
\begin{equation}
\label{eq:aux-pi(M)}%
\pi(\cat{M})=\SET{Y\in\SKB}{\pi(\xi_B)\potimes{n}\otimes Y\textrm{ for }n\gg0}.
\end{equation}
One inclusion is trivial: If $\xi_B\potimes{n}\otimes X=0$ then applying the tensor-functor~$\pi$ we have $\pi(\xi_B)\potimes{n}\otimes \pi(X)=0$. Conversely, suppose that $\pi(\xi_B)\potimes{n}\otimes \pi(X)=0$ for some $n\gg0$ and let us show that $\xi_B\potimes{n\mathbf{+1}}\otimes X=0$. The assumption is that the morphism $\xi_B\potimes{n}\otimes X$ vanishes under~$\pi$, hence factors via some object $B\otimes Z$, say $\xi_B\potimes{n}\otimes X=\beta\alpha\colon F_B\potimes{n}\otimes X\oto{\alpha} B\otimes Z\oto{\beta} X$. In that case, the following commutative diagram
\[
\xymatrix{
F_B\potimes{n+1}\otimes X\ar@/^2em/[rrrr]^-{\xi_B\potimes{n+1}\otimes X} \ar[rr]_-{F_B\otimes\xi_B\potimes{n}\otimes X} \ar[rd]_-{F_B\otimes\alpha}
&& F_B\otimes X \ar[rr]_-{\xi_B\otimes X}
&& X
\\
& F_B\otimes B\otimes Z \ar[ru]_(.6){F_B\otimes\beta} \ar[rr]^-{\xi_B\otimes B\otimes Z}_-{=0\textrm{ by (\ref{eq:B@xi_B=0})}}
&& B\otimes Z \ar[ru]_-{\beta}
}
\]
proves the claim. Hence we have~\eqref{eq:aux-pi(M)}. We now want to use \Cref{lem:nilp} `backwards', for $\pi(\xi_B)$ in the rigid category~$\SKB$. For that, we need to know  the cone on $\pi(\xi_B)$, which is $\pi(\cone(\xi_B\potimes{2}))=\pi(C)$ by \Cref{thm:triangulation-SKB}\,\eqref{it:ucone}. And we need to know that $\pi(\xi_B)$ is nilpotent on its cone. In fact, one can prove directly that already in~$\cat{K}$ we have $\xi_B\potimes{2}\otimes C=0$. In any case, since $C=\cone(\xi\potimes{2})\in\thick^\otimes(\cone(\xi_B))$ by the octahedron on~$\xi_B\potimes{2}=\xi_B\circ (F_B\otimes \xi_B)$, we have $C\in\cat{M}$ and therefore $\xi_B$ is nilpotent on~$C$. The conclusion of \Cref{lem:nilp} for $\pi(\xi_B)$ is that
\[
\pi(\cat{M})=\thick^{\otimes}_{\SKB}(C).
\]
\Cref{cor:generalBS} then tells us that $\pi$ induces an equivalence of Verdier quotients
\[
\cat{K}\Verd\thick^{\otimes}(B)=\cat{K}\Verd\cat{M}\isoto \SKB\Verd\pi\cat{M}=\SKB\Verd\thick^\otimes(C).
\]
Idempotent-completing both sides gives the equivalence $\cat{K}(U)\cong \SKB(V)$ by definition of the open complements~$U$ and~$V$. Finally we have $U\cong\Spc(\cat{K}(U))\cong\Spc(\SKB(V))\cong V$.
\end{proof}

\threestars

There is a more general question of understanding the space $\Spc(\SKB)$ outside of the open $V$ determined in \Cref{thm:birationality}. Note that we have a tt-version of \Cref{prop:Jon}, which indicates that $\Spc(\SKB)$ might be rather big, as there tends to be many more additive tensor-ideals than triangulated tensor-ideals.

\begin{Cor}\label{cor:Jon}
Let $G\colon \cat{K}\to \cat{N}$ be a tt-functor such that every triangle in $\scE_B$ is $G$-split. Let $\cat A\subseteq \cat{N}$ be a summand-closed additive subcategory which contains $GB$ and is closed under tensoring in~$\cat{N}$. Then $\pi (G^{-1}\cat A)$ is a thick $\otimes$-ideal of $\SKB$.
\qed
\end{Cor}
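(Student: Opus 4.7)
The plan is to reduce the statement to Proposition~\ref{prop:Jon} for the thickness assertion and then separately check the tensor-ideal property, both of which should reduce to formal computations with the monoidality of $G$ and~$\pi$.

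First, I would verify that the hypotheses of Proposition~\ref{prop:Jon} hold for the Frobenius class $\scE=\scE_B$ and the subcategory~$\cat A$. The assumption that every $\scE_B$-triangle is $G$-split is built into the statement. By Corollary~\ref{cor:rigid} we have $\proj(\scE_B)=\add(B\otimes\cat{K})$, so the requirement $G(\proj(\scE_B))\subseteq\cat A$ amounts to asking that $GB\otimes GX$ (and summands thereof) lie in~$\cat A$ for every $X\in\cat{K}$. Since $\cat A$ contains~$GB$, is tensor-closed inside~$\cat{N}$ in the strong sense that $\cat A\otimes\cat{N}\subseteq\cat A$, and is closed under direct summands, this inclusion holds. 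Proposition~\ref{prop:Jon} then delivers that $\pi(G^{-1}\cat A)$ is a thick subcategory of~$\SKB$.

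It remains to upgrade thickness to the tensor-ideal property. Let $X\in G^{-1}\cat A$ and $Y\in\cat{K}$ be arbitrary. Since $G$ is a tt-functor, $G(X\otimes Y)\cong GX\otimes GY$, and this lies in~$\cat A$ by our tensor-ideal hypothesis on~$\cat A\subseteq\cat{N}$, so $X\otimes Y\in G^{-1}\cat A$. Applying the monoidal quotient $\pi\colon\cat{K}\to\SKB$, and recalling that $\pi$ is the identity on objects (so every object of $\SKB$ has the form $\pi Y$ for some $Y\in\cat{K}$), we deduce that $\pi(X)\otimes\pi(Y)=\pi(X\otimes Y)\in\pi(G^{-1}\cat A)$. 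Combined with thickness, this is exactly the statement that $\pi(G^{-1}\cat A)$ is a thick tensor-ideal of~$\SKB$.

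There is no serious obstacle here; the only point that warrants care is to read ``closed under tensoring in~$\cat{N}$'' as $\cat A\otimes\cat{N}\subseteq\cat A$ (which is what is needed for the tensor-ideal conclusion) and to invoke Corollary~\ref{cor:rigid} to identify $\proj(\scE_B)$ with $\add(B\otimes\cat{K})$ so that the containment of $G(\proj(\scE_B))$ in~$\cat A$ is controlled by the single object~$GB$.
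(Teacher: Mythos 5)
Your proposal is correct and is exactly the argument the paper intends (the corollary is stated with an immediate \qed): apply Proposition~\ref{prop:Jon} after noting that $GB\in\cat A$, monoidality of~$G$, summand-closure and the ideal-type tensor-closure $\cat A\otimes\cat{N}\subseteq\cat A$ force $G(\proj(\scE_B))=G(\add(B\otimes\cat{K}))\subseteq\cat A$, and then use monoidality of~$\pi$ and surjectivity of~$\pi$ on objects to upgrade thickness to the $\otimes$-ideal property. Your explicit flagging of the reading of ``closed under tensoring in~$\cat{N}$'' is the right interpretation, since it is what the $\otimes$-ideal conclusion requires.
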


\begin{Rem}
\label{rem:pi^!}%
Given an invertible object $u\in \cat{K}$ in a tt-category we can define a graded commutative ring
\begin{displaymath}
\RKb = \RKub = \Hom_{\cat{K}}(\unit, u^{\otimes\sbull}),
\end{displaymath}
and a spectral (\ie quasi-compact) continuous map
\begin{displaymath}
\rho^\sbull_u \colon \Spc \cat{K} \to \Spec^\sbull(\RKub)
\end{displaymath}
where $\Spec^\sbull$ denotes the homogeneous spectrum. This map is defined by sending a tt-prime $\cat P\in \Spc \cat{K}$ to the homogeneous prime ideal generated by the elements
\begin{displaymath}
\{f\colon \unit \to u^{\otimes n}\;\vert\; \cone(f)\notin \cat P\}.
\end{displaymath}
as $n$ varies.

The prototypical non-trivial invertible object that exists in any tt-category is $\Sigma\unit$ and so we get a comparison map
\begin{displaymath}
\rho_{\cat{K}}^\sbull \colon \Spc \cat{K} \to \Spec^\sbull(\RKb)
\end{displaymath}
where $\RKb=\Rcatb{K,\Sigma\unit}$ is the graded endomorphism ring of the unit.

Of course, this construction is natural in the sense that every tt-functor $F\colon \cat{K}\to \cat L$ induces a ring homomorphism $F\colon \RKub \to \RLvb$, where $v=F(u)$, and thus a map on homogeneous spectra. The functor $F$ also induces a spectral map $F^\ast\colon\Spc\cat L\to \Spc \cat{K}$ and all of this fits into a commutative square
\begin{displaymath}
\xymatrix{
\Spc \cat L \ar[rr]^-{F^\ast} \ar[d]_-{\rho_{\cat L, v}^\sbull} && \Spc \cat{K} \ar[d]^-{\rho_{\cat{K},u}^\sbull} \\
\Spec \RLvb \ar[rr]_-{\Spec^\sbull F} && \Spec \RKub
}
\end{displaymath}

Now let $B$ be any rigid object and $\pi\colon \cat{K} \to \SKB =\cat{K}/\add(B\otimes\cat{K})$ be the corresponding relative category. Set $v= \underline{\Sigma}\unit$. Note that this invertible is not $\Sigma_B(\unit)$. However, the functor $\pi$ yields a graded homomorphism $\pi\colon \RKb\to \Rcat{\SKB,v}^\sbull$ and therefore a spectral continuous map $\Spec^\sbull(\pi)\colon \Spec^\sbull(\Rcat{\SKB,v}^\sbull)\to \Spec^\sbull(\RKb)$.

If we suppose that $\rho_{\cat{K}}^\sbull\colon \Spc\cat{K}\to \Spec^\sbull \RKb$ is a homeomorphism, then there exists a unique spectral map $\pi^!\colon \Spc \SKB \to \Spc \cat{K}$ making the following square commute
\begin{equation}
\label{eq:comparison-map}%
\vcenter{
\xymatrix{
\Spc \SKB \ar[rr]^-{\pi^!} \ar[d]_-{\rho_{\SKB, v}^\sbull} && \Spc \cat{K} \ar[d]^-{\rho_{\cat{K}}^\sbull}_-{\cong} \\
\Spec^\sbull\Rcat{\SKB,v}^\sbull \ar[rr]_-{\Spec^\sbull(\pi)} && \Spec^\sbull\RKb
}}
\end{equation}
simply defined to be the composite
\begin{displaymath}
\pi^!:={\rho^\sbull_{\cat{K}}}^{-1} \circ \Spec^\sbull(\pi) \circ \rho_{\SKB, v}^\sbull
\end{displaymath}
Each map is spectral, so the composite is as well, and unicity is clear.

The assumption that the comparison map $\rho_{\cat{K}}^\sbull\colon \Spc\cat{K}\to \Spec^\sbull \RKb$ is a homeomorphism is intrinsic to~$\cat{K}$ and has really nothing to do with the relative stable category construction~$\SKB$. The above diagram therefore suggests that one could hope to define $\pi^!\colon \Spc(\SKB)\to \SpcK$ in general. In other words, although $\pi\colon \cat{K}\to \SKB$ is \emph{not} a tt-functor, its seems to induce some map on spectra, that extends the birational homeomorphism of \Cref{thm:birationality}.
\end{Rem}



\end{document}